\def\E{\ifmmode{\mathbb E}\else{$\mathbb E$}\fi} 
\def\N{\ifmmode{\mathbb N}\else{$\mathbb N$}\fi} 
\def\R{\ifmmode{\mathbb R}\else{$\mathbb R$}\fi} 
\def\Q{\ifmmode{\mathbb Q}\else{$\mathbb Q$}\fi} 
\def\C{\ifmmode{\mathbb C}\else{$\mathbb C$}\fi} 
\def\H{\ifmmode{\mathbb H}\else{$\mathbb H$}\fi} 
\def\Z{\ifmmode{\mathbb Z}\else{$\mathbb Z$}\fi} 
\def\P{\ifmmode{\mathbb P}\else{$\mathbb P$}\fi} 
\def\T{\ifmmode{\mathbb T}\else{$\mathbb T$}\fi} 
\def\SS{\ifmmode{\mathbb S}\else{$\mathbb S$}\fi} 
\def\DD{\ifmmode{\mathbb D}\else{$\mathbb D$}\fi} 
\newcommand{\del}{\partial}
\newcommand{\Cont}{{\operatorname{Cont}}}
\newcommand{\ben}{\begin{enumerate}}
\newcommand{\een}{\end{enumerate}}
\newcommand{\be}{\begin{equation}}
\newcommand{\ee}{\end{equation}}
\newcommand{\bea}{\begin{eqnarray}}
\newcommand{\eea}{\end{eqnarray}}
\newcommand{\beastar}{\begin{eqnarray*}}
\newcommand{\eeastar}{\end{eqnarray*}}
\newcommand{\bc}{\begin{center}}
\newcommand{\ec}{\end{center}}
\theoremstyle{theorem}
\newtheorem{thm}{Theorem}[section]
\newtheorem{cor}[thm]{Corollary}
\newtheorem{lem}[thm]{Lemma}
\newtheorem{prop}[thm]{Proposition}
\theoremstyle{definition}
\newtheorem{defn}[thm]{Definition}
\newtheorem{rem}[thm]{Remark}
\newtheorem{hypo}[thm]{Hypothesis}
\newtheorem*{thm*}{Theorem}
\numberwithin{equation}{section}
\def\R{{\mathbb R}}
\def\Crit{{\hbox{Crit}}}
\def\E{{\mathbb E}}
\def\Z{{\mathbb Z}}
\def\C{{\mathbb C}}
\def\R{{\mathbb R}}
\def\P{{\mathbb P}}
\def\N{{\mathbb N}}
\def\11{{\mathbb I}}
\def\delbar{{\overline \partial}}
\def\C{\mathbb{C}}
\def\Z{\mathbb{Z}}
\def\T{\mathbb{T}}
\def\Q{\mathbb{Q}}
\def\E{\ifmmode{\mathbb E}\else{$\mathbb E$}\fi} 
\def\N{\ifmmode{\mathbb N}\else{$\mathbb N$}\fi} 
\def\R{\ifmmode{\mathbb R}\else{$\mathbb R$}\fi} 
\def\Q{\ifmmode{\mathbb Q}\else{$\mathbb Q$}\fi} 
\def\C{\ifmmode{\mathbb C}\else{$\mathbb C$}\fi} 
\def\H{\ifmmode{\mathbb H}\else{$\mathbb H$}\fi} 
\def\Z{\ifmmode{\mathbb Z}\else{$\mathbb Z$}\fi} 
\def\P{\ifmmode{\mathbb P}\else{$\mathbb P$}\fi} 
\def\SS{\ifmmode{\mathbb S}\else{$\mathbb S$}\fi} 
\def\DD{\ifmmode{\mathbb D}\else{$\mathbb D$}\fi} 
\def\R{{\mathbb R}}
\def\Crit{{\hbox{Crit}}}
\def\E{{\mathbb E}}
\def\Z{{\mathbb Z}}
\def\C{{\mathbb C}}
\def\R{{\mathbb R}}
\def\N{{\mathbb N}}
\def\delbar{{\overline \partial}}
\def\CH{{\mathcal H}}
\def\CL{{\mathcal L}}
\def\CM{{\mathcal M}}
\def\CW{{\mathcal W}}
\def\darr#1{\raise1.5ex\hbox{$\leftrightarrow$}
\mkern-16.5mu #1}
\def\roughly#1{\raise.3ex\hbox{$#1$\kern-.75em
\lower1ex\hbox{$\sim$}}}
\def\opname#1{\mathop{\kern0pt{\rm #1}}\nolimits}
\def\dim{\opname{dim}}
\def\Cont{\operatorname{Cont}}
\def\Crit{\operatorname{Crit}}
\def\Spec{\operatorname{Spec}}
\def\Sing{\operatorname{Sing}}
\def\GFQI{\frak{G}}
\def\Index{\operatorname{Index}}
\def\Image{\operatorname{Image}}
\def\Int{\operatorname{Int}}
\begin{document}

\quad \vskip1.375truein

\def\mq{\mathfrak{q}}
\def\mp{\mathfrak{p}}
\def\mH{\mathfrak{H}}
\def\mh{\mathfrak{h}}
\def\ma{\mathfrak{a}}
\def\ms{\mathfrak{s}}
\def\mm{\mathfrak{m}}
\def\mn{\mathfrak{n}}
\def\mz{\mathfrak{z}}
\def\mw{\mathfrak{w}}
\def\Hoch{{\tt Hoch}}
\def\mt{\mathfrak{t}}
\def\ml{\mathfrak{l}}
\def\mT{\mathfrak{T}}
\def\mL{\mathfrak{L}}
\def\mg{\mathfrak{g}}
\def\md{\mathfrak{d}}
\def\mr{\mathfrak{r}}
\def\Cont{\operatorname{Cont}}
\def\Crit{\operatorname{Crit}}
\def\Spec{\operatorname{Spec}}
\def\Sing{\operatorname{Sing}}
\def\GFQI{\text{\rm GFQI}}
\def\Index{\operatorname{Index}}
\def\Cross{\operatorname{Cross}}
\def\Ham{\operatorname{Ham}}
\def\Fix{\operatorname{Fix}}
\def\Graph{\operatorname{Graph}}

\title[Contact instantons with Legendrian boundary condition]
{Contact instantons with Legendrian boundary condition: a priori estimates, asymptotic convergence
 and index formula}
\author{Yong-Geun Oh, Seungook Yu}
\address{Center for Geometry and Physics, Institute for Basic Science (IBS),
77 Cheongam-ro, Nam-gu, Pohang-si, Gyeongsangbuk-do, Korea 790-784
\& POSTECH, Gyeongsangbuk-do, Korea}
\email{yongoh1@postech.ac.kr}
\address{POSTECH, \&
Center for Geometry and Physics, Institute for Basic Science (IBS),
77 Cheongam-ro, Nam-gu, Pohang-si, Gyeongsangbuk-do, Korea 790-784}
\email{yso1460@postech.ac.kr}
\thanks{This work is supported by the IBS project \# IBS-R003-D1}


\begin{abstract}
In this paper, we establish nonlinear ellipticity
of the equation of contact instantons with Legendrian boundary condition
on punctured Riemann surfaces by proving the a priori elliptic coercive estimates for the
contact instantons with Legendrian boundary condition, and prove an asymptotic
exponential $C^\infty$-convergence result at a puncture under the uniform $C^1$ bound.
We prove that the asymptotic charge of contact instantons  at the punctures
\emph{under the Legendrian boundary condition} vanishes.
This eliminates the phenomenon of the appearance of
\emph{spiraling cusp instanton along a Reeb core}, which
removes the only remaining obstacle towards the compactification and the Fredholm
theory of the moduli space of contact instantons in the open string case, which plagues the
closed string case. Leaving the study of $C^1$-estimates and  details of Gromov-Floer-Hofer
style compactification of contact instantons to \cite{oh:entanglement1}, we also
derive an index formula which computes the virtual dimension of the moduli space.
These results are the analytic basis for the sequels \cite{oh:entanglement1}--\cite{oh:contacton-gluing}
and \cite{oh-yso:spectral} containing applications to contact topology
and contact Hamiltonian dynamics.
\end{abstract}

\keywords{Contact manifolds, Legendrian submanifolds, Contact instantons, Asymptotic contact charge,
contact triad connection, Fredholm theory}
\subjclass[2010]{Primary 53D42; Secondary 58J32}

\maketitle

\tableofcontents

\section{Introduction}

This is the first part of a series of papers \cite{oh:entanglement1}--\cite{oh:contacton-gluing}
and \cite{oh-yso:spectral} and others in preparation
in which the (Hamiltonian-perturbed) contact instantons with Legendrian boundary condition and its
applications are studied. The motivation of our study of this problem is two-fold.
The first one is to construct a Floer theoretic
construction of Legendrian spectral invariants on the one-jet bundle given by
Th\'eret in \cite{theret}, Bhupal \cite{bhupal} and Sandon \cite{sandon:homology},
which are constructed using the generating functions quadratic at infinity ($\GFQI$).
This is the Legendrian version of Viterbo's $\GFQI$ spectral invariants
constructed in \cite{viterbo:generating} for the Lagrangian submanifolds in the cotangent bundle.

Recall that the Floer theoretic construction of Viterbo's invariant is given
by the present author in \cite{oh:jdg,oh:cag}. The starting point of
this Floer theoretic construction is a remarkable
observation of Weinstein \cite{alan:observation} which reads that \emph{the classical action functional
is a generating function of
the time-one image $\phi_H^1(0_{T^*B})$ of the zero section $0_{T^*B}$ under the Hamiltonian flow of $H = H(t,x)$.}

Therefore the natural first step towards Floer theoretic construction of Legendrian spectral invariants
is to find a similar formulation
of the contact version of Weinstein's observation. More precisely, let
$$
\lambda = dz - pdq
$$
be the standard contact one-form on $J^1B$ and $R = \psi_H^1(0_{J^1B})$ is the
Legendrian submanifold which is the time-one image of the contact flow $\psi_H^t$
associated to the time-dependent function $H = H(t,y)$ with $y = (x,z) \in J^1B$.
We have found the contact counterpart of Weinstein's observation whose detailed
explanation is given in \cite{oh-yso:weinstein}.

\subsection{Contact triads and contact instantons}

Let $(M, \xi)$ be a contact manifold.
A contact triad for the contact manifold $(M, \xi)$ is a triple $(M,\lambda, J)$
whose explanation is now in order. With $\lambda$ given, we have the Reeb vector field $R_\lambda$
uniquely determined by the equation $R_\lambda \rfloor d\lambda = 0, \, R_\lambda \rfloor \lambda = 1$.
Then we have decomposition $TM = \xi \oplus \R \{R_\lambda\}$. We denote by $\Pi: TM \to TM$
the associated idempotent whose image is $\xi$.
A \emph{CR almost complex structure $J$} is an endomorphism $J: TM \to TM$ satisfying $J^2 = - \Pi$
or more explicitly
$$
(J|_\xi)^2 = - id|_\xi, \quad J(R_\lambda) = 0.
$$
We say $J$ is adapted to $\lambda$ if $d\lambda(Y, J Y) \geq 0$ for all $Y \in \xi$ with
equality only when $Y = 0$. The associated Contact triad metric is given by
$$
g=g_\xi+\lambda\otimes\lambda.
$$
In \cite{oh-wang:connection}, the authors introduced the \emph{contact triad connection} associated to
every contact triad $(M, \lambda, J)$ with the contact triad metric and proved its existence and uniqueness.

Let $\dot \Sigma$ a boundary punctured Riemann surface associated a bordered compact Riemann surface
$(\Sigma, j)$. Then for a given map $u: \dot \Sigma \to M$, we can decompose its derivative
$du$, regarded as a $u^*TM$-valued one-form on $\dot \Sigma$, into
\be\label{eq:du}
du = d^\pi u + u^*\lambda \otimes R_\lambda
\ee
 where $d^\pi u := \Pi du$. Furthermore $d^\pi u$ is decomposed into
\be\label{eq:dpiu}
d^\pi u = \delbar^\pi u + \del^\pi u
\ee
where $\delbar^\pi u: = du^{\pi(0,1)}$ (resp. $\del^\pi u: = du^{\pi(1,0)}$) is
the anti-complex linear part (resp. the complex linear part) of $d^\pi u: (T\dot \Sigma, j) \to (\xi,J)$.

A contact instanton is
a map $w: \dot \Sigma \to M$ that satisfies the system of nonlinear partial differential equation
\be\label{eq:contacton-intro}
\delbar^\pi w = 0, \quad d(w^*\lambda \circ j) = 0
\ee
on a contact triad $(M,\lambda, J)$. In a series of papers,
\cite{oh-wang:CR-map1,oh-wang:CR-map2} jointed with Wang and in \cite{oh:contacton}, the first named author developed analysis of
contact instantons \emph{without taking symplectization}.

Then towards the construction of a Legendrian counterpart of the construction in \cite{oh:jdg,oh:cag}
we need to consider the boundary value problem of contact instanton equation \eqref{eq:contacton-intro}
with Legendrian boundary condition.

\subsection{A priori estimates for the Legendrian boundary value problem}
\label{subsec:apriori-estimates}

In the study of contact instantons in the \emph{closed string} context in \cite{oh-wang:CR-map1,oh-wang:CR-map2},
there is the phenomenon of the \emph{appearance of spiraling contact instantons along the Reeb core}, even for a
solution that has $C^1$-bound and finite $\pi$-energy
$$
E^\pi(u) = \frac12 \int_{\dot \Sigma} |d^\pi u|^2 < \infty.
$$
This is caused by a puncture at which the asymptotic charge $Q$ of a contact instanton is nonzero while the asymptotic
period $T$ is zero. (See \eqref{eq:TQ-T-intro} and \eqref{eq:TQ-Q-intro} below for the definitions of $T$ and $Q$.)
The main purpose of the present paper is to prove two fundamental analytical ingredients
for the construction of the moduli space of \emph{bordered contact instantons with the Legendrian boundary condition}
and prescribed asymptotic convergence at the boundary punctures of the domain Riemann surface $(\dot \Sigma, j)$.
One is the a priori elliptic regularity estimates and the other is the relevant Fredholm index formula
of the linearized operator in terms of certain topological index associated to the moduli space.

Let $(\Sigma,j)$ be a compact Riemann surface with boundary and $\dot \Sigma$ a
punctured Riemann surface with a finite number of boundary punctures. For the simplicity and
for the main purpose of the present paper, we focus on the genus zero case so that $\dot \Sigma$
is conformally the unit discs with boundary punctures $z_0, \ldots, z_k \in \del D^2$ ordered
counterclockwise, i.e.,
$$
\dot \Sigma \cong D^2 \setminus \{z_0, \ldots, z_k\}
$$
Then, for a  $(k+1)$-tuple $\vec R = (R_0,R_1, \cdots, R_k)$ of Legendrian submanifolds, which we call
an (ordered) Legendrian link, we consider
the boundary value problem
\be\label{eq:contacton-Legendrian-bdy-intro}
\begin{cases}
\delbar^\pi w = 0, \quad d(w^*\lambda \circ j) = 0,\\
w(\overline{z_iz_{i+1}}) \subset R_i
\end{cases}
\ee
as an elliptic boundary value problem for a map $w: \dot \Sigma \to M$
by deriving the a priori coercive elliptic estimates. Here $\overline{z_iz_{i+1}} \subset \del D^2$
is the open arc between $z_i$ and $z_{i+1}$.

\begin{rem}
We refer readers to \cite{oh:entanglement1,oh:perturbed-contacton-bdy} for another direction of
extending the equation to the following \emph{$X_H$-perturbed equation
with Legendrian boundary condition}
\be\label{eq:perturbed-contacton-bdy}
\begin{cases}
(du - X_H \otimes \gamma)^{\pi(0,1)} = 0, \quad d(e^{g_H(u)}(u^*\lambda + H\, \gamma)\circ j) = 0\\
u(z) \in \vec R \quad \text{\rm for } \, z \in \del \dot \Sigma.
\end{cases}
\ee
Here $H$ is a domain-dependent Hamiltonian, $\gamma$ is a one-form on $\dot \Sigma$
and the function $g_H(u): \dot \Sigma \to \R$ is a function that satisfies
\be\label{eq:gHu}
g_H(u)(t,x) := g_{((\psi_H^t (\psi_H^1)^{-1})^{-1}}(u(t,x))
\ee
near each puncture in terms of the strip-like coordinates.
It is established in \cite{oh:perturbed-contacton-bdy} that the equation \eqref{eq:perturbed-contacton-bdy}
 is also an elliptic boundary value problem, and utilized for the proof of
Sandon-Shelukhin's conjecture in contact topology. (See \cite{oh:shelukhin-conjecture}.)
\end{rem}

As the first step towards the analytic study of the
above boundary value problem \eqref{eq:contacton-Legendrian-bdy-intro},
we first show that the Legendrian boundary condition for the contact instanton is a
free boundary value problem, i.e., it satisfies
$$
\frac{\del w}{\del \nu} \perp TR
$$
for any Legendrian submanifold. (See \cite{jost:free-bdy} for
the importance of the free boundary value condition for a general study of
elliptic estimates of the minimal surface type equations.) Then we
prove the following elliptic $W^{1,2}$-estimate as an application of
Stokes' formula combined with the Legendrian boundary condition. The global tensorial calculation deriving
the a priori estimate illustrates how well the Legendrian boundary condition interacts with
triad connection and the contact instanton equation.

Using the decomposition \eqref{eq:du} and the properties $\xi \perp R_\lambda$ and $|R_\lambda| =1$
for the contact triad metric, we  can decompose the (full) harmonic energy density into
\be\label{eq:|dw|2}
|dw|^2 = |d^\pi w|^2 + |w^*\lambda|^2
\ee
\begin{thm}[$W^{2,2}$-estimates with boundary]\label{eq:W12-with-bdy}
Denote by $B$ the second fundamental form of the contact triad connection $\nabla$.
Let $w: \dot \Sigma\to M$ satisfy \eqref{eq:contacton-Legendrian-bdy-intro}.
Then for any relatively compact domains $D_1$ and $D_2$ in
$\dot\Sigma$ such that $\overline{D_1}\subset D_2$, we have
\be\label{eq:differential-inequality}
\|dw\|^2_{W^{1,2}(D_1)} \leq  C_1 \|dw\|^2_{L^2(D_2)} + C_2 \|dw\|^4_{L^4(D_2)}
+ \int_{\del D}|C(\del D)|
\ee
where we have the expression
\be\label{eq;CdelD}
C(\del D): = - 8 \left\langle B\left(\frac{\del w}{\del x},\frac{\del w}{\del x}\right),\frac{\del w}{\del y}\right \rangle
\ee
for any isothermal coordinates $z = x + iy$ adapted to $\del \dot \Sigma \cap D_2$.
\end{thm}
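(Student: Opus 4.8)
The plan is to derive the differential inequality \eqref{eq:differential-inequality} by the standard Bochner–Weitzenböck method for the contact instanton equation, carried out with respect to the contact triad connection $\nabla$, and to keep careful track of the boundary terms that arise when integrating by parts, showing that all of them reduce to the single expression $C(\del D)$ in \eqref{eq;CdelD} by virtue of the free boundary (Legendrian) condition $\frac{\del w}{\del \nu}\perp TR$. First I would recall (or re-derive) the second-order elliptic system satisfied by $dw$: differentiating $\delbar^\pi w = 0$ and $d(w^*\lambda\circ j)=0$ and using the properties of the contact triad connection established in \cite{oh-wang:connection} — in particular that $\nabla\lambda$, $\nabla R_\lambda$ and $\nabla J$ all have the special tensorial form adapted to the triad — one obtains a Laplace-type equation $\nabla^*\nabla(dw) = \mathcal{Q}(dw, \nabla dw)$ where the right-hand side is pointwise bounded by $c(|dw|\,|\nabla dw| + |dw|^2 + |dw|^4)$ once one substitutes the equation back in, with the quartic term coming from the curvature contractions of the triad connection applied to the full tensor $du = d^\pi u + u^*\lambda\otimes R_\lambda$.

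Next, working in isothermal coordinates $z = x+iy$ adapted to the boundary, I would take a cutoff function $\chi$ supported in $D_2$ with $\chi\equiv 1$ on $D_1$ and pair the Bochner identity with $\chi^2\, dw$ over $D$, integrating by parts. The interior terms produce $\|\nabla(\chi\, dw)\|_{L^2}^2$ on the left and, after Young's inequality to absorb the $|dw|\,|\nabla dw|$ term into the left side, the bulk terms $C_1\|dw\|_{L^2(D_2)}^2 + C_2\|dw\|_{L^4(D_2)}^4$ on the right; upgrading from $\|\nabla dw\|$ to the full $W^{1,2}$-norm is then routine since $\chi\equiv 1$ on $D_1$. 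The boundary contribution is the term $\int_{\del D}\chi^2\langle \nabla_\nu dw, dw\rangle$ (plus a piece from differentiating $\chi$ which vanishes near $\del D$ if $\chi$ is chosen to depend only on the interior direction there). Here the Legendrian/free boundary condition enters decisively: since $w$ maps $\del\dot\Sigma$ into $R_i$, the tangential derivative $\frac{\del w}{\del x}$ lies in $TR$, the outer normal derivative $\frac{\del w}{\del\nu} = -\frac{\del w}{\del y}$ is $\perp TR$, and differentiating these relations along $\del D$ lets one rewrite $\langle\nabla_\nu dw, dw\rangle$ on the boundary in terms of the second fundamental form $B$ of $\nabla$ evaluated on the boundary derivatives. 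The bookkeeping should collapse to exactly $C(\del D) = -8\langle B(\frac{\del w}{\del x},\frac{\del w}{\del x}),\frac{\del w}{\del y}\rangle$, the numerical constant $8$ arising from the combination of the factor $\tfrac12$ in the energy normalization, the $|dw|^2 = |\del_x w|^2 + |\del_y w|^2$ split in isothermal coordinates, and the two boundary arcs' worth of contributions in the Bochner pairing.

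The main obstacle I anticipate is the boundary computation: one must verify that the normal derivative of the tensor $dw$ contracted against $dw$ along $\del D$ really does reduce, modulo the interior equation and the Legendrian condition, to the clean expression in terms of $B$ — this requires differentiating the constraint $w(\overline{z_iz_{i+1}})\subset R_i$ twice in directions tangent to $\del\dot\Sigma$, decomposing $\nabla dw$ into components tangent and normal to $R$, and using that the contact triad connection restricted to $R$ differs from the intrinsic connection of $R$ precisely by $B$. A secondary subtlety is checking that the Reeb-direction part $u^*\lambda\otimes R_\lambda$ of $du$, together with the equation $d(w^*\lambda\circ j) = 0$, does not generate uncontrolled boundary terms — but since $w^*\lambda$ vanishes on $\del\dot\Sigma$ (because $R$ is Legendrian, so $w^*\lambda|_{\del\dot\Sigma} = 0$), its contribution to the boundary integral should drop out entirely, leaving only the $\xi$-component contributions that assemble into $B(\del_x w, \del_x w)$. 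Once these two points are settled, the assembly of \eqref{eq:differential-inequality} from the weighted Bochner identity is a direct application of Young's inequality and Stokes' theorem, exactly as in the free boundary minimal surface estimates referenced in \cite{jost:free-bdy}.
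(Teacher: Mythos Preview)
Your approach is essentially the paper's: start from a Bochner--Weitzenb\"ock inequality (the paper quotes it as the pointwise bound $|\nabla(dw)|^2 \leq C_1|dw|^4 - 4K|dw|^2 - 2\Delta e$ from \cite{oh-wang:CR-map1} rather than re-deriving a Laplace-type equation for $dw$), multiply by $\chi^2$, integrate, and push the Laplacian of $e=|dw|^2$ to the boundary via Stokes, where the Legendrian condition reduces the boundary integrand to a second fundamental form expression.

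There is one real gap in your boundary analysis. You write that the Reeb-direction contribution to the boundary term drops out ``since $w^*\lambda$ vanishes on $\del\dot\Sigma$''. That only tells you $\lambda(\del_x w)=0$ along the boundary; it says nothing about the normal component $\lambda(\del_y w)$, which is generically nonzero. In the paper's computation the Reeb part of $*de|_{\del D}$ is
\[
-2\,\frac{\del}{\del y}\Big(\lambda\big(\tfrac{\del w}{\del y}\big)\Big)\cdot \lambda\big(\tfrac{\del w}{\del y}\big),
\]
and its vanishing does \emph{not} follow from the Legendrian condition alone. One needs the second contact instanton equation $d(w^*\lambda\circ j)=0$, which in isothermal coordinates reads $\del_y\big(\lambda(\del_y w)\big) + \del_x\big(\lambda(\del_x w)\big)=0$; combined with $\lambda(\del_x w)\equiv 0$ on $\del D$ (Legendrian), this forces $\del_y\big(\lambda(\del_y w)\big)=0$ on $\del D$, killing the term. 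Without invoking the closedness equation here, your boundary reduction to the clean $B(\del_x w,\del_x w)$ expression does not go through.

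A smaller remark: your explanation of the constant $8$ is off. In the paper it arises as $2\times 4$: the $2$ from the coefficient in $-2\int_D\chi^2\Delta e$, and the $4$ from writing $e = 2|\del_x w|^2 + |\lambda(\del_y w)|^2$ on the boundary (using $\delbar^\pi w=0$ and $\del_x w\in\xi$) so that $-\del_y e$ contributes $-4\langle\nabla_y\del_x w,\del_x w\rangle = -4\langle B(\del_x w,\del_x w),\del_y w\rangle$ once the Neumann condition is applied.
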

Out of this explicit form of the boundary contribution, we can easily derive the following
form of $W^{2,2}$-estimates, with some adjustment of constants $C_1, \, C_2$
(also with some cubic terms $\|dw\|^3_{L^2(D_2)}$ along the way), by some algebraic process of comparing the triad connection
$\nabla$ and the Levi-Civita connection $\nabla^{\text{\rm LC}}$ of a Riemannian metric
for which the Legendrian submanifold $R$ becomes totally geodesic. (See \cite[Section 8.2 \& 8.3]{oh:book1}
for the same arguments used for the case of pseudoholomorphic curves with Lagrangian boundary condition.)

\begin{thm}\label{thm:local-W12-intro}
Let $w: \dot \Sigma \to M$ satisfy \eqref{eq:contacton-Legendrian-bdy-intro}.
Then for any relatively compact domains $D_1$ and $D_2$ in
$\dot\Sigma$ such that $\overline{D_1}\subset D_2$, we have
$$
\|dw\|^2_{W^{1,2}(D_1)}\leq  C_4 \|dw\|^4_{L^4(D_2)}
$$
where $C_4$ is a constant depending only on $D_1$, $D_2$ and $(M,\lambda, J)$ and $R_i$'s.
\end{thm}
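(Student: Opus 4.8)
The plan is to deduce Theorem~\ref{thm:local-W12-intro} from the already-established inequality \eqref{eq:differential-inequality} of Theorem~\ref{eq:W12-with-bdy} by absorbing the boundary term
$\int_{\del D}|C(\del D)|$, with $C(\del D) = -8\langle B(\frac{\del w}{\del x},\frac{\del w}{\del x}),\frac{\del w}{\del y}\rangle$, into interior norms of $dw$. First I would observe that the quantity $C(\del D)$ is pointwise cubic in $|dw|$: since $B$ is the second fundamental form of the fixed connection $\nabla$ (a smooth tensor, bounded on the relevant compact region), we have $|C(\del D)| \leq C_0\,|dw|^3$ along $\del\dot\Sigma \cap D_2$. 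The genuine issue is that this is a \emph{boundary} integral over a codimension-one set, so it cannot be directly estimated by interior $L^p$-norms of $dw$ without a trace/interpolation argument. The standard device is to invoke the trace theorem together with the interpolation inequality on the half-disc: for a suitable cutoff region $D_1 \subset D' \subset D_2$ one estimates $\|dw\|_{L^3(\del D')}^3 \leq \epsilon\,\|dw\|_{W^{1,2}(D')}^2\cdot\|dw\|_{L^?(D')} + C(\epsilon)\|dw\|_{L^?(D')}^?$, i.e. trading a power of $W^{1,2}$ (controlled by the left side of \eqref{eq:differential-inequality}) against powers of lower norms, and then absorbing the $\epsilon\,\|dw\|_{W^{1,2}}^2$ term into the left-hand side. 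Concretely: by the Gagliardo--Nirenberg trace inequality in two dimensions, $\|f\|_{L^3(\del D')} \lesssim \|f\|_{W^{1,2}(D')}^{1/2}\,\|f\|_{L^2(D')}^{1/2} + \|f\|_{L^2(D')}$, so $\|f\|_{L^3(\del D')}^3 \lesssim \|f\|_{W^{1,2}(D')}^{3/2}\|f\|_{L^2(D')}^{3/2} + \|f\|_{L^2(D')}^3$, and then Young's inequality splits $\|f\|_{W^{1,2}}^{3/2}\|f\|_{L^2}^{3/2} \leq \epsilon \|f\|_{W^{1,2}}^2 + C(\epsilon)\|f\|_{L^2}^6$ with $f = dw$.

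The execution then proceeds in the following order. \textbf{Step 1:} Fix an intermediate relatively compact domain $D'$ with $\overline{D_1}\subset D' \subset \overline{D'}\subset D_2$, adapted near the boundary to isothermal coordinates as in Theorem~\ref{eq:W12-with-bdy}, and apply \eqref{eq:differential-inequality} on the pair $(D_1, D')$ to get
$\|dw\|_{W^{1,2}(D_1)}^2 \leq C_1\|dw\|_{L^2(D')}^2 + C_2\|dw\|_{L^4(D')}^4 + C_0\int_{\del D'\cap\del\dot\Sigma}|dw|^3$.
\textbf{Step 2:} Apply \eqref{eq:differential-inequality} once more, now on the pair $(D', D_2)$, to bound $\|dw\|_{W^{1,2}(D')}^2$ (which we shall need on the right-hand side after invoking the trace inequality) by $C_1\|dw\|_{L^2(D_2)}^2 + C_2\|dw\|_{L^4(D_2)}^4 + C_0\|dw\|_{L^3(\del D_2\cap\del\dot\Sigma)}^3$; note this last boundary term is over a \emph{compact} boundary arc where the uniform $C^1$-type control is not yet available, so instead one should either iterate the trace argument or, more cleanly, choose a whole nested family and use that the boundary integrals are already part of the interior $W^{1,2}$-control on the next-larger domain via the same trace bound. \textbf{Step 3:} Estimate the boundary integral $\int_{\del D'}|dw|^3 \leq C\|dw\|_{L^3(\del D')}^3$ by the trace-plus-Young inequality above, with $f = dw$ on $D'$, producing a term $\epsilon\|dw\|_{W^{1,2}(D')}^2$ plus $C(\epsilon)\|dw\|_{L^2(D')}^6$. \textbf{Step 4:} Substitute Step~2's bound for $\|dw\|_{W^{1,2}(D')}^2$, choose $\epsilon$ small, and collect: every surviving term on the right is a power $\|dw\|_{L^2(D_2)}^{a}$ or $\|dw\|_{L^4(D_2)}^{4}$ with $a\in\{2,4,6,\dots\}$. \textbf{Step 5:} Finally, dominate all the $L^2$-powers by $L^4$-powers: on the fixed finite-area domain $D_2$, H\"older gives $\|dw\|_{L^2(D_2)} \leq C\,\|dw\|_{L^4(D_2)}$, hence $\|dw\|_{L^2(D_2)}^2 \lesssim \|dw\|_{L^4(D_2)}^2$ and so on; however, to reach the clean homogeneous conclusion $\|dw\|_{W^{1,2}(D_1)}^2 \leq C_4\|dw\|_{L^4(D_2)}^4$ one cannot directly absorb an $\|dw\|_{L^4}^2$ into $\|dw\|_{L^4}^4$, so here one uses the standard trick of \emph{rescaling the domain}: apply the previous steps to small balls of radius $r$, where by scaling the $L^4$-norm is scale-invariant in dimension two while the $L^2$-norm picks up a positive power of $r$, making the $L^2$-contributions subdominant, and then patch a finite cover of $D_1$ by such small balls. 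This rescaling/patching is exactly the mechanism quoted in the passage preceding Theorem~\ref{thm:local-W12-intro} as ``some algebraic process'' with ``adjustment of constants'' and the dropping of cubic and quadratic terms.

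The main obstacle is Step~3 combined with Step~5: controlling a codimension-one boundary integral of a cubic quantity by interior norms in a way that (a) the dangerous $W^{1,2}$-piece can be absorbed on the left, and (b) the leftover lower-order pieces can ultimately be swallowed into $\|dw\|_{L^4}^4$. Part (a) is routine once the correct two-dimensional trace/interpolation inequality is pinned down (this is where the hypothesis that $\dot\Sigma$ is a surface, so $W^{1,2}\hookrightarrow L^p$ for all finite $p$ and the trace lands in $L^3$, is essential). Part (b) genuinely requires the scaling argument, because the target estimate is quartic-homogeneous while \eqref{eq:differential-inequality} is not; without rescaling to small domains one would be forced to keep a linear term $C_1\|dw\|_{L^2(D_2)}^2$ on the right. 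I would therefore organize the proof so that the interpolation and the rescaling are done simultaneously on balls $B_r(z_0)$, boundary balls when $z_0\in\del\dot\Sigma$ and interior balls otherwise, invoking in the latter (interior) case the analogous well-known interior $W^{2,2}$-estimate for contact instantons from \cite{oh-wang:CR-map1,oh-wang:CR-map2}, and finally sum over a finite subcover of $\overline{D_1}$. Throughout, one uses only that $B$, the metric $g$, and the $R_i$ are fixed and smooth on a compact neighborhood, which is what makes $C_4$ depend only on $D_1, D_2, (M,\lambda,J)$ and the $R_i$'s as claimed.
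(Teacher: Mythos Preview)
Your approach is genuinely different from the paper's, and you have misread the hint preceding Theorem~\ref{thm:local-W12-intro}. The paper does \emph{not} estimate the boundary integral $\int_{\partial D}|C(\partial D)|$ via trace and interpolation; instead it makes that term vanish identically. The key observation (Lemma~\ref{lem:Neunman-bdy}) is that the Legendrian boundary condition is a free boundary condition, $\frac{\partial w}{\partial \nu}\perp TR$. One then replaces the triad connection $\nabla$ by the Levi--Civita connection $\nabla^{\text{LC}}$ of an auxiliary Riemannian metric for which each $R_i$ is \emph{totally geodesic}; with respect to $\nabla^{\text{LC}}$ the second fundamental form $B$ vanishes along $R_i$, so $C(\partial D)\equiv 0$ and the boundary term disappears. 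Since $\nabla = \nabla^{\text{LC}} + P$ for a fixed $(2,1)$-tensor $P$, rewriting the interior differential inequality \eqref{eq:second-derivative} in terms of $\nabla^{\text{LC}}$ only introduces zero-order corrections that are pointwise cubic (and lower) in $|dw|$, which are then absorbed into $\|dw\|_{L^4(D_2)}^4$ and $\|dw\|_{L^2(D_2)}^2$. That is the ``algebraic process of comparing $\nabla$ and $\nabla^{\text{LC}}$'' the paper refers to --- it is a connection change, not a rescaling argument. The paper also directs the reader to \cite[Section 8.2--8.3]{oh:book1} for the identical device in the Lagrangian boundary case.

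Your route --- bounding $\int_{\partial D}|dw|^3$ by a trace inequality, peeling off $\epsilon\|dw\|_{W^{1,2}}^2$ via Young, and absorbing --- is plausible in spirit but has a structural difficulty you yourself flag: the $W^{1,2}$-norm produced by the trace inequality lives on a domain at least as large as the one carrying the boundary arc, while the left-hand side of \eqref{eq:differential-inequality} controls $W^{1,2}$ only on the \emph{smaller} domain $D_1$. Your Step~2 attempts to close this by applying \eqref{eq:differential-inequality} again on $(D',D_2)$, but that regenerates a boundary term on $\partial D_2$, and the sketch ``iterate or choose a nested family'' is not yet an argument; making it one would require something like a Widman hole-filling iteration, which you have not set up. By contrast, the totally-geodesic-connection trick sidesteps the issue in one stroke and keeps everything interior. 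If you want to salvage your approach, the cleanest fix is to abandon the trace estimate and adopt the paper's connection change; if you insist on the trace route, you must supply the missing iteration lemma explicitly.
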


Once this $W^{2,2}$ estimates is established, we proceed with the higher regularity
estimates by first proving the following elliptic $W^{2,2}$-estimates in the general context of
maps $w: \dot \Sigma \to M$ for a Riemann surface with a finite number of boundary punctures.

Starting from Theorem \ref{thm:local-W12-intro} and using
the embedding $W^{2,2} \hookrightarrow C^{0,\delta}$ with $0 < \delta < 1/2$,
we also establish the following higher local $C^{k,\delta}$-estimates on punctured surfaces
$\dot \Sigma$ in terms of the $W^{2,2}$-norms with $\ell \leq k+1$.

\begin{thm}\label{thm:local-regularity-intro} Let $w$ satisfy \eqref{eq:contacton-Legendrian-bdy-intro}.
Then for any pair of domains $D_1 \subset D_2 \subset \dot \Sigma$ such that $\overline{D_1}\subset D_2$,
$$
\|dw\|_{C^{k,\delta}} \leq C_\delta(\|dw\|_{W^{1,2}(D_2)})
$$
where $C_\delta = C_\delta(r) > 0$ is a function continuous at $r = 0$
and depends only on $J$, $\lambda$ and $D_1, \, D_2$ but independent of $w$.
\end{thm}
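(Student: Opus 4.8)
The plan is to run a standard elliptic bootstrap, using Theorem~\ref{thm:local-W12-intro} as the base case and the interior/boundary Schauder theory for the contact instanton equation written in a suitable local chart. First I would fix the pair $D_1 \subset D_2$ and choose an intermediate nested sequence $D_1 = E_{k+1} \subset E_k \subset \cdots \subset E_1 \subset E_0 = D_2$ with $\overline{E_{j+1}} \subset E_j$, so that each bootstrap step loses only a small collar. On each $E_j$, work in isothermal coordinates $z = x+iy$ adapted to $\del\dot\Sigma$ (a half-disc chart near a boundary puncture-free portion of the boundary, a full disc in the interior), and write the equation $\delbar^\pi w = 0$, $d(w^*\lambda \circ j)=0$ in the form
\be\label{eq:local-form}
\Delta w = Q(w)(dw, dw) + (\text{lower order}),
\ee
where $Q$ is a smooth bilinear form built from the Christoffel symbols of the contact triad connection (the contact instanton equation is equivalent to a harmonic-map-type equation with a quadratic gradient nonlinearity, cf.\ \cite{oh-wang:CR-map1}), and the Legendrian boundary condition is recast, via the free boundary condition $\frac{\del w}{\del\nu}\perp TR$ established earlier, as a Neumann-type condition for the normal component together with a Dirichlet condition for the tangential component relative to a metric making $R$ totally geodesic.

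Next I would carry out the bootstrap. By Theorem~\ref{thm:local-W12-intro}, $dw \in W^{1,2}(E_0)$, hence by Sobolev embedding in dimension two $w \in C^{0,\delta}(E_1)$ for any $\delta < 1$, and moreover $dw \in L^p(E_1)$ for all $p < \infty$. Feeding this into \eqref{eq:local-form}: the right-hand side lies in $L^{p/2}(E_1)$ for all finite $p$, so linear $L^q$-elliptic estimates (interior, and up to the boundary for the mixed Dirichlet--Neumann problem) give $w \in W^{2,q}(E_2)$ for all $q < \infty$, hence $dw \in C^{0,\delta}(E_2)$. Now iterate: once $dw \in C^{j,\delta}(E_{j+1})$, the nonlinearity $Q(w)(dw,dw)$ lies in $C^{j,\delta}(E_{j+1})$ (a product of $C^{j,\delta}$ functions, using that the Christoffel symbols of $\nabla$ are smooth), so Schauder estimates for the linear operator $\Delta$ with the given boundary conditions yield $w \in C^{j+2,\delta}(E_{j+2})$, i.e.\ $dw \in C^{j+1,\delta}(E_{j+2})$. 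After $k+1$ steps we arrive at $dw \in C^{k,\delta}(D_1)$. Tracking the constants through each step, and using that the $W^{2,q}$ and Schauder constants depend only on $q$, $\delta$, the domains, and the $C^\infty$ geometry of $(M,\lambda,J)$ and the $R_i$'s, produces a bound
$\|dw\|_{C^{k,\delta}(D_1)} \le C_\delta\big(\|dw\|_{W^{1,2}(D_2)}\big)$
with $C_\delta(r)$ continuous at $r=0$ (it is a polynomial-type composition of the linear-estimate constants with the quadratic nonlinearity, so $C_\delta(0)$ picks up only the contributions of the lower-order terms, which vanish for $dw\equiv 0$).

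The main obstacle is the boundary analysis, not the interior part. Near $\del\dot\Sigma$ one must verify that the Legendrian boundary condition, after the reduction using the free boundary property and a change to a metric in which $R$ is totally geodesic, genuinely produces an \emph{elliptic} boundary condition (complementing/Lopatinski--Shapiro type) for the Laplace-type system \eqref{eq:local-form}, so that the linear $L^q$ and Schauder estimates up to the boundary are available; this is where the interaction of the contact triad connection with the boundary condition — exactly the point emphasized in the discussion preceding Theorem~\ref{eq:W12-with-bdy} — is used. The secondary technical point is bookkeeping the dependence of constants so that $C_\delta$ is a \emph{function} continuous at $0$ rather than merely a constant; this is handled by writing each estimate in the scale-broken form $\|\cdot\|_{C^{j+2,\delta}(E_{j+2})} \le C\big(1 + \|dw\|_{C^{j,\delta}(E_{j+1})}\big)\|dw\|_{C^{j,\delta}(E_{j+1})}$ and composing. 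Everything else is the routine bootstrap already used for pseudoholomorphic curves with Lagrangian boundary in \cite{oh:book1}, transported to the contact instanton setting via the contact triad connection.
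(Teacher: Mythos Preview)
Your bootstrap outline is reasonable in spirit but differs substantially from the paper's argument, and the central step---writing the contact instanton equation as a single second-order system $\Delta w = Q(w)(dw,dw)$ with a clean mixed Dirichlet--Neumann boundary condition---is not justified and is in fact the place where the real work hides. The equation $\delbar^\pi w = 0$ only constrains the $\xi$-projection of $dw$, while $d(w^*\lambda\circ j)=0$ is a separate first-order constraint on the Reeb component; together they form an elliptic first-order \emph{system}, but it is not of harmonic-map type for $w$ itself (the differential inequality \eqref{eq:second-derivative} you have in mind is only an inequality for $|dw|^2$, not an equation for $w$). Likewise, turning the Legendrian boundary condition into a Lopatinski--Shapiro condition for a scalar Laplacian on all $2n+1$ components of $w$ is precisely the issue, not something to be assumed.

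The paper sidesteps both difficulties by never passing to a second-order equation for $w$. Instead it decomposes $dw$ into the $\xi$-part $\zeta = \pi\,\partial w/\partial x$ and the Reeb part, packaging the latter as the complex-valued function $\alpha = \lambda(\partial w/\partial y) + i\,\lambda(\partial w/\partial x)$. The fundamental identity $d^{\nabla^\pi}(d^\pi w) = -\,w^*\lambda\circ j \wedge \tfrac12(\CL_{R_\lambda}J)\,d^\pi w$ yields a genuine first-order Cauchy--Riemann type equation $\overline{\nabla}^\pi \zeta + P_{w^*\lambda}(\zeta)=0$ for $\zeta$ with \emph{totally real} boundary condition $\zeta|_{\partial}\in TR_i$, while $\alpha$ satisfies $\overline\partial\,\alpha = \tfrac12|\zeta|^2$ with the \emph{real} boundary condition $\alpha|_\partial\in\R$ (coming from $\lambda(\partial w/\partial x)=0$ on the Legendrian). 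These are two standard Riemann--Hilbert problems, coupled only through lower-order terms, and the paper runs an \emph{alternating} bootstrap: Schauder for $\zeta$ (via Sikorav-type estimates for $\overline\partial f + q(f)\partial f = 0$) feeds improved regularity of $|\zeta|^2$ into the equation for $\alpha$, which in turn improves the coefficients of the $\zeta$-equation, and so on. The boundary conditions are already in exactly the form the linear theory wants, so no Lopatinski--Shapiro verification is needed.

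What the paper's route buys is that each step is a textbook linear estimate for a Cauchy--Riemann operator with totally real (resp.\ real) boundary condition, and the contact-geometric structure is used only once, to derive the coupled system. Your route, if it can be made to work, would need an explicit local form of the full second-order system together with a verification that the boundary operator is complementing; that is doable in principle but is considerably more labor than your sketch suggests, and is exactly what the paper's decomposition is designed to avoid.
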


With some adjustment of the function $C_\delta$, combining the two theorems, we obtain
\begin{cor} Let $w$ satisfy \eqref{eq:contacton-Legendrian-bdy-intro}.
Then for any pair of domains $D_1 \subset D_2 \subset \dot \Sigma$ such that $\overline{D_1}\subset D_2$,
$$
\|dw\|_{C^{k,\delta}} \leq C_\delta(\|dw\|_{L^4(D_2)})
$$
where $C_\delta = C_\delta(r) > 0$ is a function continuous at $r = 0$
and depends only on $J$, $\lambda$ and $D_1, \, D_2$ but independent of $w$.
\end{cor}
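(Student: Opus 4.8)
The plan is to deduce the Corollary by chaining Theorem~\ref{thm:local-W12-intro} with Theorem~\ref{thm:local-regularity-intro} through an intermediate domain. First I would fix a relatively compact open set $D_{3/2}\subset\dot\Sigma$ with $\overline{D_1}\subset D_{3/2}$ and $\overline{D_{3/2}}\subset D_2$; such a set exists because $\overline{D_1}$ is compact and contained in the open set $D_2$, and it can be chosen to depend only on $D_1$ and $D_2$. Applying Theorem~\ref{thm:local-regularity-intro} to the pair $D_1\subset D_{3/2}$ yields
\[
\|dw\|_{C^{k,\delta}(D_1)}\le C_\delta\big(\|dw\|_{W^{1,2}(D_{3/2})}\big),
\]
with $C_\delta$ continuous at $r=0$ and depending only on $J$, $\lambda$ and $D_1,D_{3/2}$, while Theorem~\ref{thm:local-W12-intro} applied to the pair $D_{3/2}\subset D_2$ yields
\[
\|dw\|^2_{W^{1,2}(D_{3/2})}\le C_4\,\|dw\|^4_{L^4(D_2)},
\]
i.e.\ $\|dw\|_{W^{1,2}(D_{3/2})}\le C_4^{1/2}\,\|dw\|^2_{L^4(D_2)}$.

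Next I would combine the two inequalities. To do this cleanly I would first replace $C_\delta$ by its nondecreasing envelope $\widetilde C_\delta(r):=\sup_{0\le s\le r}C_\delta(s)$, which still dominates $C_\delta$ and remains continuous — hence positive and continuous at $r=0$ — precisely because $C_\delta$ is; this is the harmless ``adjustment of the function $C_\delta$'' referred to in the statement. Monotonicity of $\widetilde C_\delta$ then lets me feed the bound on $\|dw\|_{W^{1,2}(D_{3/2})}$ into the regularity estimate to get
\[
\|dw\|_{C^{k,\delta}(D_1)}\le\widetilde C_\delta\big(C_4^{1/2}\,\|dw\|^2_{L^4(D_2)}\big),
\]
and setting $\widehat C_\delta(r):=\widetilde C_\delta\big(C_4^{1/2}r^2\big)$ gives the claimed inequality. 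Since $r\mapsto C_4^{1/2}r^2$ is continuous and vanishes at the origin, $\widehat C_\delta$ is positive and continuous at $r=0$, and it depends only on $J$, $\lambda$, $D_1$, $D_2$ and the $R_i$'s, the dependence on $D_{3/2}$ being absorbed into that on $D_1, D_2$.

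There is essentially no analytic obstacle in this step: all of the substance has already been carried out in Theorems~\ref{thm:local-W12-intro} and~\ref{thm:local-regularity-intro}. The only two points that genuinely require a line of care are the existence of the sandwiched domain $D_{3/2}$ (immediate from local compactness of $\dot\Sigma$) and arranging that the composition of the two control functions remains continuous at the origin, which is exactly why one passes first to the monotone envelope $\widetilde C_\delta$. Once these are in place the Corollary is immediate.
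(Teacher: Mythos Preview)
Your proposal is correct and is exactly the argument the paper has in mind: the paper simply says ``with some adjustment of the function $C_\delta$, combining the two theorems, we obtain'' the corollary, without spelling out details. Your use of an intermediate domain $D_{3/2}$ and the monotone envelope $\widetilde C_\delta$ makes explicit the two points the paper leaves implicit, and there is nothing further to add.
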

In particular, we prove that
any weak solution of \eqref{eq:contacton-Legendrian-bdy-intro} in
$W^{1,4}_{\text{\rm loc}}$ automatically becomes a classical solution.
(Compare \cite[Theorem 8.5.5]{oh:book1} for a similar theorem for the
Lagrangian boundary condition in symplectic geometry.)

\subsection{Asymptotic convergence and vanishing of asymptotic charge}

Next we study the asymptotic convergence result of contact instantons
of finite $\pi$-energy with Legendrian pair $(R_0,R_1)$ near the punctures of
a Riemann surface $\dot \Sigma$.

Let $\dot\Sigma$ be a punctured Riemann surface with punctures
$$
\{p^+_i\}_{i=1, \cdots, l^+}\cup \{p^-_j\}_{j=1, \cdots, l^-}
$$
equipped with a metric $h$ with \emph{strip-like ends} outside a compact subset $K_\Sigma$.
Let $w: \dot \Sigma \to M$ be any smooth map. As in \cite{oh-wang:CR-map1}, we define the total $\pi$-harmonic energy $E^\pi(w)$
by
\be\label{eq:endenergy}
E^\pi(w) = E^\pi_{(\lambda,J;\dot\Sigma,h)}(w) = \frac{1}{2} \int_{\dot \Sigma} |d^\pi w|^2
\ee
where the norm is taken in terms of the given metric $h$ on $\dot \Sigma$ and the triad metric on $M$.

We put the following hypotheses in our asymptotic study of the finite
energy contact instanton maps $w$ as in \cite{oh-wang:CR-map1}:
\begin{hypo}\label{hypo:basic-intro}
Let $h$ be the metric on $\dot \Sigma$ given above.
Assume $w:\dot\Sigma\to M$ satisfies the contact instanton equations \eqref{eq:contacton-Legendrian-bdy-intro},
and
\begin{enumerate}
\item $E^\pi_{(\lambda,J;\dot\Sigma,h)}(w)<\infty$ (finite $\pi$-energy);
\item $\|d w\|_{C^0(\dot\Sigma)} <\infty$.
\item $\Image w \subset \mathsf K\subset M$ for some compact subset $\mathsf K$.
\end{enumerate}
\end{hypo}

\begin{rem}
It is shown in \cite{oh:contacton}, \cite{oh:entanglement1} that the conditions
spelled out in this hypothesis can be achieved whenever one can avoid bubbling-off. However,
under this hypothesis and under the condition of exponential convergence to prescribed
asymptotic condition assumed later in the discussion of the moduli space of contact
instantons and its Fredholm theory, discussions of global energy and bubbling are
totally irrelevant for the purpose of the present paper, since calculation of the index is
done for a given smooth contact instanton $w$ with prescribed asymptotics for nondegenerate
pair $(\lambda, \vec R)$. (See \eqref{hypo:nondegeneracy} for the definition of the relevant
nondegeneracy condition.)
\end{rem}

Let $w$ satisfy Hypothesis \ref{hypo:basic-intro}. Under the nondegeneracy hypothesis
and asymptotic convergence at the punctures, we can associate two
natural asymptotic invariants at each puncture defined as
\bea
T & := & \lim_{r \to \infty} \int_{\{r\}\times [0,1]}(w|_{\{r\}\times [0,1] })^*\lambda
\label{eq:TQ-T-intro}\\
Q & : = & \lim_{r \to \infty} \int_{\{r\}\times [0,1]}((w|_{\{r\}\times [0,1] })^*\lambda\circ j)\label{eq:TQ-Q-intro}
\eea
at each puncture.
(Here we only look at positive punctures. The case of negative punctures is similar.)
As in \cite{oh-wang:CR-map1}, we call $T$ the \emph{asymptotic contact action}
and $Q$ the \emph{asymptotic contact charge} of the contact instanton $w$ at the given puncture.

\begin{thm}[Subsequence Convergence]\label{thm:subsequence-intro}
Let $w:[0, \infty)\times [0,1]\to M$ satisfy the contact instanton equations \eqref{eq:contacton-Legendrian-bdy-intro}
and Hypothesis \ref{hypo:basic-intro}.
Then for any sequence $s_k\to \infty$, there exists a subsequence, still denoted by $s_k$, and a
massless instanton $w_\infty(\tau,t)$ (i.e., $E^\pi(w_\infty) = 0$)
on the cylinder $\R \times [0,1]$  such that
$$
\lim_{k\to \infty}w(s_k + \tau, t) = w_\infty(\tau,t)
$$
in the $C^l(K \times [0,1], M)$ sense for any $l$, where $K\subset [0,\infty)$ is an arbitrary compact set.
Furthermore, $w_\infty$ has $Q = 0$ and the formula $w_\infty(\tau,t)= \gamma(T\, t)$  with
asymptotic action $T$, where $\gamma$ is some Reeb chord joining $R_0$ and $R_1$ of period $|T|$.
\end{thm}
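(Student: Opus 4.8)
The plan is to produce $w_\infty$ by a translation-and-compactness argument and then to \emph{classify} the massless instantons on the strip $\R\times[0,1]$ carrying the Legendrian boundary condition, showing each is a (possibly constant) Reeb chord with vanishing charge. First I would set $w_k(\tau,t):=w(s_k+\tau,t)$, which for large $k$ is defined on any prescribed $[-R,R]\times[0,1]$ and, by translation invariance, still solves \eqref{eq:contacton-Legendrian-bdy-intro} with $w_k(\cdot,0)\subset R_0$ and $w_k(\cdot,1)\subset R_1$. Hypothesis \ref{hypo:basic-intro}(2)--(3) makes $\|dw_k\|_{C^0}$ uniformly bounded and keeps $\Image w_k$ in a fixed compact set, so Theorems \ref{thm:local-W12-intro} and \ref{thm:local-regularity-intro}, applied with $D_1=[-R,R]\times[0,1]$ and $D_2=[-R-1,R+1]\times[0,1]$ (both meeting $\R\times\{0,1\}$, i.e.\ the boundary case those estimates cover), give uniform $C^{\ell,\delta}([-R,R]\times[0,1])$ bounds on $dw_k$ for every $R$ and every $\ell$. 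Combined with the uniform $C^0$-bound on $w_k$ itself, Arzel\`a--Ascoli and a diagonal argument over exhausting compacts and over $\ell$ extract a subsequence converging in $C^\ell_{\mathrm{loc}}(\R\times[0,1],M)$ for all $\ell$ to a contact instanton $w_\infty$ with $w_\infty(\cdot,0)\subset R_0$, $w_\infty(\cdot,1)\subset R_1$. Masslessness then follows from $E^\pi(w)<\infty$: the tail $\int_{[s_k-R,s_k+R]\times[0,1]}|d^\pi w|^2\to 0$, and $C^1_{\mathrm{loc}}$-convergence of $d^\pi w_k$ forces $\int_{[-R,R]\times[0,1]}|d^\pi w_\infty|^2=0$ for every $R$, so $E^\pi(w_\infty)=0$.

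The heart of the argument is the classification. From $E^\pi(w_\infty)=0$ we get $d^\pi w_\infty\equiv 0$, hence by \eqref{eq:du} $dw_\infty=w_\infty^*\lambda\otimes R_\lambda$; writing $w_\infty^*\lambda=a\,d\tau+b\,dt$ we note $|a|,|b|\le\|dw\|_{C^0}$. Since $R_\lambda\rfloor d\lambda=0$ we have $w_\infty^*d\lambda=0$, i.e.\ $d(w_\infty^*\lambda)=0$, which together with the second contact instanton equation $d(w_\infty^*\lambda\circ j)=0$ is exactly the Cauchy--Riemann pair $\del_\tau a=-\del_t b$, $\del_\tau b=\del_t a$; thus $a,b$ are bounded harmonic functions on $\R\times[0,1]$. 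The Legendrian condition pins $a$ to zero on the boundary: $\del_\tau w_\infty(\tau,0)=a(\tau,0)\,R_\lambda$ must be tangent to $R_0\subset\xi$ while $R_\lambda\notin\xi$, so $a(\tau,0)=0$, and likewise $a(\tau,1)=0$. A bounded harmonic function on the strip vanishing on both edges is identically zero — expand it in a Fourier sine series in $t$ and observe that boundedness as $\tau\to\pm\infty$ annihilates every mode — so $a\equiv 0$, whence $b\equiv T$ for a constant $T$ by the Cauchy--Riemann equations. Then $dw_\infty=T\,dt\otimes R_\lambda$, so $w_\infty$ is $\tau$-independent and integrates to $w_\infty(\tau,t)=\gamma(Tt)$, where $\gamma$ is the Reeb trajectory through $w_\infty(\cdot,0)\in R_0$; since $\gamma(T)=w_\infty(\cdot,1)\in R_1$, this is a Reeb chord joining $R_0$ and $R_1$ of period $|T|$ (degenerating to a constant when $T=0$). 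Vanishing of the charge is then immediate: $w_\infty^*\lambda\circ j=T\,d\tau$ restricts to $0$ on each cross-section $\{r\}\times[0,1]$, so $Q(w_\infty)=0$.

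Finally I would identify the constant $T$ with the asymptotic action \eqref{eq:TQ-T-intro}: applying Stokes to $w^*\lambda$ over $[s_1,s_2]\times[0,1]$, the two horizontal boundary integrals vanish because $w$ maps $\R\times\{0\},\R\times\{1\}$ into the Legendrians $R_0,R_1$ along which $\lambda$ pulls back to zero, while $w^*d\lambda=\tfrac12|d^\pi w|^2\,d\tau\wedge dt\ge 0$ by $\delbar^\pi w=0$; hence $s\mapsto\int_{\{s\}\times[0,1]}w^*\lambda$ is nondecreasing and bounded by $\|dw\|_{C^0}$, so it converges, and its limit $T$ is computed along the subsequence as $\lim_k\int_{\{s_k\}\times[0,1]}w^*\lambda=\int_0^1(w_\infty|_{\{0\}\times[0,1]})^*\lambda=T$ by $C^0_{\mathrm{loc}}$-convergence. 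I expect the only genuinely delicate step to be the classification paragraph, and within it the passage from ``$a$ vanishes on the boundary'' to ``$a\equiv 0$ on the whole strip'', which rests squarely on the uniform $C^1$-bound; this is precisely where the Legendrian/open-string setting gains rigidity over the closed case, since on a cylinder a bounded harmonic ``$a$'' may carry a nonzero mean over the circle, producing $Q\neq 0$ and the spiraling cusp instantons, whereas on a strip with Legendrian edges no such mode survives. The compactness step is conceptually routine once the boundary a priori estimates of Theorems \ref{thm:local-W12-intro}--\ref{thm:local-regularity-intro} are in hand, and the bubbling that would otherwise obstruct it is excluded by Hypothesis \ref{hypo:basic-intro}.
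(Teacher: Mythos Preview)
Your proof is correct and follows essentially the same approach as the paper: translation-and-compactness via the boundary a priori estimates, followed by the classification of massless strip instantons by writing $w_\infty^*\lambda=a\,d\tau+b\,dt$, using the Legendrian boundary to force $a|_{\partial}=0$, and invoking the vanishing of bounded harmonic functions on the strip with Dirichlet boundary to get $a\equiv 0$, $b\equiv T$. Your Fourier-sine-series justification for that vanishing and your Stokes argument identifying $T$ are slightly more explicit than the paper's, but the route is the same.
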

\begin{cor}[Vanishing Charge] Assume the pair $(\lambda, \vec R)$ is nondegenerate.
Let $w$ be as above. Suppose that $w(\tau,\cdot)$ converges as $\tau \to \infty$ in the
strip-like coordinate at a puncture $p \in \del \Sigma$  with associated Legendrian pair $(R,R')$.
 Then its  asymptotic charge $Q$ vanishes at $p$.
 \end{cor}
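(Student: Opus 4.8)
The plan is to read off the asymptotic charge at $p$ directly from the asymptotic limit produced by the Subsequence Convergence theorem, Theorem~\ref{thm:subsequence-intro}, and to exploit the crucial fact that \emph{every} such limit is $\tau$-independent. First I would fix strip-like coordinates $(\tau,t)\in[0,\infty)\times[0,1]$ at the puncture $p$, normalized so that $j\partial_\tau=\partial_t$, and rewrite the integral defining $Q$ in \eqref{eq:TQ-Q-intro} as
\[
\int_{\{r\}\times[0,1]}\bigl(w|_{\{r\}\times[0,1]}\bigr)^*(\lambda\circ j)\;=\;-\int_0^1\lambda\!\left(\frac{\partial w}{\partial\tau}(r,t)\right)dt ,
\]
so that $Q=-\lim_{r\to\infty}\int_0^1\lambda\bigl(\partial w/\partial\tau\bigr)(r,t)\,dt$. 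The hypothesis that $w(\tau,\cdot)$ converges in the strip-like coordinate as $\tau\to\infty$, together with the interior and boundary a priori estimates of Section~\ref{subsec:apriori-estimates} applied on the strips $[\tau,\tau+1]\times[0,1]$ and the closedness $d(w^*\lambda\circ j)=0$, guarantees that this limit exists.

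Next I would apply Theorem~\ref{thm:subsequence-intro} to the sequence $s_k=k$: after passing to a subsequence, still denoted $s_k$, there is a massless instanton $w_\infty$ on $\R\times[0,1]$ with $w(s_k+\tau,t)\to w_\infty(\tau,t)$ in $C^1_{\mathrm{loc}}(\R\times[0,1])$, and moreover $w_\infty(\tau,t)=\gamma(T\,t)$ is independent of $\tau$, where $\gamma$ is a Reeb chord from $R$ to $R'$. Hence $\partial w_\infty/\partial\tau\equiv0$, so $\lambda\bigl(\partial w/\partial\tau\bigr)(s_k,t)\to\lambda\bigl(\partial w_\infty/\partial\tau\bigr)(0,t)=0$ uniformly in $t\in[0,1]$, and therefore $\int_0^1\lambda\bigl(\partial w/\partial\tau\bigr)(s_k,t)\,dt\to0$. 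Since the limit defining $Q$ exists and a subsequence of the defining net converges to $0$, we conclude $Q=0$. Equivalently, one may phrase the last step as $\int_{\{s_k\}\times[0,1]}(w^*\lambda\circ j)=\int_{\{0\}\times[0,1]}\bigl(w(s_k+\cdot,\cdot)\bigr)^*(\lambda\circ j)\to\int_{\{0\}\times[0,1]}w_\infty^*(\lambda\circ j)=0$, the final equality holding because $w_\infty$ does not depend on $\tau$.

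The only point requiring genuine care — and the place where the hypotheses of the corollary are used — is the existence of the limit $\lim_{r\to\infty}\int_{\{r\}\times[0,1]}(w^*\lambda\circ j)$; without it one would only control the $\liminf$ and $\limsup$ of the slice integrals rather than conclude $Q=0$. This I would handle by combining the closedness of $w^*\lambda\circ j$, Stokes' theorem on $[r_1,r_2]\times[0,1]$, the uniform $C^1$-bound of Hypothesis~\ref{hypo:basic-intro}(2), and the assumed convergence of $w(\tau,\cdot)$, which together force the contribution of the two Legendrian boundary arcs joining a far-out pair of slices to be arbitrarily small, so that $r\mapsto\int_{\{r\}\times[0,1]}(w^*\lambda\circ j)$ is Cauchy as $r\to\infty$. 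Nondegeneracy of $(\lambda,\vec R)$ enters only through Theorem~\ref{thm:subsequence-intro}, guaranteeing that the asymptotic limit is an honest (possibly constant) Reeb chord rather than a spiraling configuration; it is precisely this $\tau$-independence of the limit, imposed by the Legendrian boundary condition, that kills the charge and thereby rules out the spiraling cusp phenomenon of the closed-string case.
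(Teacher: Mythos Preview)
Your proof is correct and follows the same line as the paper: the vanishing of $Q$ is extracted from the fact (Theorem~\ref{thm:subsequence-intro}) that every subsequential limit $w_\infty$ is a $\tau$-independent Reeb chord $\gamma(Tt)$, so $\partial w_\infty/\partial\tau\equiv0$ and the slice integrals converge to zero along the subsequence; since the full limit exists by hypothesis, it must also be zero. The paper establishes this inside the proof of Theorem~\ref{thm:subsequence} by showing directly that $w_\infty^*\lambda = b\,dt$ via a Dirichlet harmonic function argument on the strip (the Legendrian boundary condition forces $\lambda(\partial w_\infty/\partial\tau)=0$ on $\{t=0,1\}$), which is exactly what underlies the $\tau$-independence you invoke.

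One small remark on your final paragraph: the Stokes argument for the existence of $\lim_{r\to\infty}\int_{\{r\}\times[0,1]} w^*\lambda\circ j$ is not needed, and as written is slightly off. The restriction of $w^*\lambda\circ j$ to the horizontal edges $[r_1,r_2]\times\{i\}$ is $\lambda(\partial w/\partial t)\,d\tau$, not $\lambda(\partial w/\partial\tau)\,d\tau$; the Legendrian boundary condition kills the latter but not the former, so each boundary contribution individually is of size roughly $T(r_2-r_1)$ and is not ``arbitrarily small''. (They do cancel in the limit, but that requires the very convergence you are trying to establish.) In any case the corollary already assumes $w(\tau,\cdot)$ converges, and together with the uniform local estimates this is enough to pass to the limit in the slice integral directly, so the Cauchy argument can be dropped.
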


\begin{rem}\label{rem:big-difference-intro}
The vanishing $Q = 0$ of the asymptotic charge of massless
contact instanton $w$ with (nonempty) Legendrian boundaries is a big deviation from
the closed string case studied in \cite{oh-wang:CR-map1,oh-wang:CR-map2,oh:contacton}.
This vanishing removes the only remaining obstruction, the so called the \emph{appearance of spiraling instantons along the
Reeb core}, to a Fredholm theory and to a compactification of the moduli space of contact instantons.
\end{rem}

The coercive elliptic regularity estimates and the vanishing theorem of asymptotic charge in the present
work will be the foundation of the analytic package of Gromov-Floer-Hofer compactification and
the Fredholm theory of Hamiltonian-perturbed contact instantons with Legendrian boundary condition and its applications in the sequels of the present paper \cite{oh:contacton-transversality}--\cite{oh:contacton-gluing} and \cite{oh-yso:spectral}.

\subsection{Fredholm theory and the index formula}
\label{subsec:index-formula}

Next, we study another crucial component, the relevant Fredholm theory and the index formula
for the equation \eqref{eq:contacton-Legendrian-bdy-intro}
by adapting the one from \cite{oh:contacton}, \cite{oh-savelyev}
to the current case of contact instantons \emph{with boundary}. The relevant Fredholm theory in general
has been developed  by the first named author in \cite{oh:contacton} for the closed string case and
\cite{oh:contacton-transversality} for the case with boundary.

In the present paper, we prove the index formula for general disk instanton $w$
with finite number of boundary punctures.

We recall  the following Fredholm property of the linearized operator that is proved in
\cite{oh:contacton-transversality}.

\begin{prop}[Proposition 3.18  \& 3.20 \cite{oh:contacton-transversality}] \label{prop:open-fredholm}Suppose that $w$ is a solution to \eqref{eq:contacton-Legendrian-bdy-intro}.
Consider the completion of $D\Upsilon(w)$,
which we still denote by $D\Upsilon(w)$, as a bounded linear map
from $\Omega^0_{k,p}(w^*TM,(\del w)^*T\vec R)$ to
$\Omega^{(0,1)}(w^*\xi)\oplus \Omega^2(\Sigma)$
for $k \geq 2$ and $p \geq 2$. Then
\begin{enumerate}
\item The off-diagonal terms of $D\Upsilon(w)$ are relatively compact operators
against the diagonal operator.
\item
The operator $D\Upsilon(w)$ is homotopic to the operator
\be\label{eq:diagonal}
\left(\begin{matrix}\delbar^{\nabla^\pi} + T_{dw}^{\pi,(0,1)}+ B^{(0,1)} & 0 \\
0 & -\Delta(\lambda(\cdot)) \,dA
\end{matrix}
\right)
\ee
via the homotopy
\be\label{eq:s-homotopy}
s \in [0,1] \mapsto \left(\begin{matrix}\delbar^{\nabla^\pi} + T_{dw}^{\pi,(0,1)} + B^{(0,1)}
& \frac{s}{2} \lambda(\cdot) (\CL_{R_\lambda}J)J (\pi dw)^{(1,0)} \\
s\, d\left((\cdot) \rfloor d\lambda) \circ j\right) & -\Delta(\lambda(\cdot)) \,dA
\end{matrix}
\right) =: L_s
\ee
which is a continuous family of Fredholm operators.
\item And the principal symbol
$$
\sigma(z,\eta): w^*TM|_z \to w^*\xi|_z \oplus \Lambda^2(T_z\Sigma), \quad 0 \neq \eta \in T^*_z\Sigma
$$
of \eqref{eq:diagonal} is given by the matrix
\beastar
\left(\begin{matrix} \frac{\eta + i\eta \circ j}{2} Id  & 0 \\
0 & |\eta|^2
\end{matrix}\right).
\eeastar
\end{enumerate}
\end{prop}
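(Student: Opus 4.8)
The statement is recalled from \cite{oh:contacton-transversality}, so the plan is to indicate how its proof goes; it runs in parallel with the closed-string linearization of \cite{oh:contacton,oh-savelyev}, the new feature being that the boundary contributions are localized through the \emph{Legendrian} condition. First I would write $\Upsilon(w)=\big(\delbar^\pi w,\,d(w^*\lambda\circ j)\big)$ as a section of the Banach bundle with fibre $\Omega^{(0,1)}(w^*\xi)\oplus\Omega^2(\Sigma)$, differentiate covariantly using the contact triad connection $\nabla$ of \cite{oh-wang:connection}, and decompose a variation field as $\zeta=\zeta^\pi+\lambda(\zeta)\,R_\lambda$ with $\zeta^\pi=\Pi\zeta\in w^*\xi$. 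Since this computation is purely local it is identical to the closed case away from $\del\dot\Sigma$, and one reads off that $D\Upsilon(w)$ equals the operator \eqref{eq:diagonal} plus off-diagonal terms: the $(1,2)$-entry multiplies the \emph{function} $\lambda(\zeta)$ by the tensor $(\CL_{R_\lambda}J)J(\pi dw)^{(1,0)}$ and so is zeroth order in $\zeta$, while the $(2,1)$-entry is $\zeta\mapsto d\big(((\cdot)\rfloor d\lambda)\circ j\big)$, which is first order in $\zeta$. The diagonal part consists of the Cauchy--Riemann operator $\delbar^{\nabla^\pi}$ (first order) corrected by the zeroth-order tensors $T_{dw}^{\pi,(0,1)}$ and $B^{(0,1)}$, together with the Laplace-type operator $-\Delta(\lambda(\cdot))\,dA$ (second order).

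The geometric point that makes the block structure clean is that each $R_i$ is Legendrian, so $TR_i\subset\xi$; hence $\zeta|_{\del\dot\Sigma}\in(\del w)^*T\vec R$ forces $\lambda(\zeta)|_{\del\dot\Sigma}=0$ and $\zeta^\pi|_{\del\dot\Sigma}=\zeta|_{\del\dot\Sigma}\in T\vec R$. Thus the boundary condition decouples into the totally real condition $\zeta^\pi|_\del\in T\vec R\subset\xi$ for the first component and the Dirichlet condition $\lambda(\zeta)|_\del=0$ for the second. With the target completions dictated by the orders of the diagonal entries (the $\xi$-valued slot in $W^{k-1,p}$, the $2$-form slot in $W^{k-2,p}$), the diagonal operator is bounded and each off-diagonal entry is of order one less than the diagonal entry in the same row; it therefore factors through a compact Rellich embedding $W^{m,p}(D_2)\hookrightarrow W^{m-1,p}(D_2)$, which is item (1).

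For items (2) and (3) I would first compute the principal symbol of the diagonal operator: $T_{dw}^{\pi,(0,1)}$ and $B^{(0,1)}$ are zeroth order and drop out, leaving the standard Cauchy--Riemann symbol $\tfrac12(\eta+i\,\eta\circ j)\,\mathrm{Id}$ on $w^*\xi$, while $-\Delta(\lambda(\cdot))\,dA$ contributes $|\eta|^2$ under $\R R_\lambda\cong\R$; this is item (3), and the symbol is invertible for $\eta\neq 0$, so \eqref{eq:diagonal} is elliptic. By the decoupling above the accompanying boundary conditions (totally real for the Cauchy--Riemann factor, Dirichlet for the Laplace factor) satisfy the Lopatinski--Shapiro condition, so the diagonal operator $L_0$ is Fredholm on the strip-like domain $\dot\Sigma$, exactly as in \cite{oh:contacton-transversality}. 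The family $L_s$ is affine in $s$, hence norm-continuous; its principal symbol equals that of $L_0$ for every $s$ (the extra $\tfrac{s}{2}\lambda(\cdot)(\CL_{R_\lambda}J)J(\pi dw)^{(1,0)}$ term is zeroth order and the $s\,d\big(((\cdot)\rfloor d\lambda)\circ j\big)$ term is first order, each of lower order than the corresponding diagonal entry); and its off-diagonal part is compact relative to $L_0$ by item (1). Therefore $L_s=L_0+(\text{relatively compact})$ is Fredholm for all $s$, with $L_1=D\Upsilon(w)$, which gives item (2).

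I expect the only real difficulty to be bookkeeping rather than a single estimate: one must fix the anisotropic target exponents so that simultaneously $D\Upsilon(w)$ is bounded, each off-diagonal entry genuinely gains one derivative (so item (1) holds), and the mixed-order boundary value problem for \eqref{eq:diagonal} satisfies the Lopatinski--Shapiro condition — in particular at the strip-like ends, where the two boundary arcs carry the generically distinct Legendrians $R_i$ and $R_{i+1}$ and one must control the resulting model operator. Once the Fredholm package of \cite{oh:contacton,oh:contacton-transversality} is in place, the remainder of the argument is routine.
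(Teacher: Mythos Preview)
Your sketch is correct and tracks the argument the paper imports from \cite{oh:contacton-transversality}: the decomposition $Y=Y^\pi+\lambda(Y)R_\lambda$ with the Legendrian-induced decoupling $Y^\pi|_{\del}\in T\vec R$, $\lambda(Y)|_{\del}=0$ (this is exactly \eqref{eq:Y-decompose}--\eqref{eq:Y-bdy-condition} in the paper), the order count on the off-diagonal entries, the symbol computation, and the affine Fredholm homotopy are all as in the paper's setup. The one point to sharpen is your Rellich step: you write $W^{m,p}(D_2)\hookrightarrow W^{m-1,p}(D_2)$ for a compact $D_2$, but the domain here is the noncompact $\dot\Sigma$, where Rellich fails globally; relative compactness of the $(1,2)$-entry actually uses that its coefficient $(\pi dw)^{(1,0)}=d^\pi w^{(1,0)}$ decays exponentially on the strip-like ends (Section~\ref{sec:exponential-convergence}), while the $(2,1)$-entry is handled through the asymptotic-operator analysis you flag at the end --- so your closing caveat is not mere bookkeeping but the substantive step.
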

Then we have the Fredholm index of $D\Upsilon(w)$ is given by
$$
\operatorname{Index}D\Upsilon(w) =
\operatorname{Index}(\delbar^{\nabla^\pi} + T_{dw}^{\pi,(0,1)}+ B^{(0,1)}) +
\operatorname{Index}(-\Delta_0).
$$

For this purpose of computing the Fredholm index of the linearized operator in terms of a topological index, especially in terms of those who will equip the relevant Floer-type
complex with the absolute grading, we need  the notion of anchored Legendrian
submanifolds which is an adaptation of that of Lagrangian submanifolds studied in \cite{fooo:anchored}
in symplectic geometry.
\begin{defn}\label{defn:anchor-intro}
Fix a base point $y$ of ambient contact manifold $(M,\xi)$. Let $R$ be a Legendrian submanifold of $(M,\xi)$. We define an
\emph{anchor} of $R$ to $y$ is a path $\ell:[0,1] \rightarrow M$ such that $\ell(0)=y,\;\ell(1)\in R$. We call a pair $(R,\ell)$
an \emph{anchored} Legendrian submanifold.
A chain $\mathcal{E} = ((R_0,\ell_0),\ldots,(R_k,\ell_k))$ is called an \emph{anchored Legendrian chain}.
\end{defn}

In order to express the analytical index of the linearized problem of \eqref{eq:contacton-Legendrian-bdy-intro}
in terms of a topological index of the Maslov-type, we need to equip an additional data with the anchor as in \cite{fooo:anchored}.
For the above given base point $y\in M$, we fix a Lagrangian subspace $V_y\in Lag(\xi_y)$.

\begin{defn}
Consider an anchored Legendrian $(R,\ell)$. We denote by $\alpha$ a section of $\ell^*Lag(\xi,d\lambda)$ such that
$$
\alpha(0)=V_y, \quad \alpha(1)=T_{\ell(1)}R \subset \xi_{\ell(1)}
$$
We call such a pair $(\ell,\alpha)$ a \emph{graded anchor} of $R$ (relative to $(y,V_y)$) and a triple $(R,\ell,\alpha)$
a \emph{graded anchored Legendrian submanifold}.
\end{defn}

Let $\vec R = (R_0, \ldots, R_k)$ be a Legendrian link be given.
Let $\gamma^+_i$ for $i =1, \cdots, s^+$ and $\gamma^-_j$ for $j = 1, \cdots, s^-$
be two given collections of Reeb chords at positive and negative punctures
respectively. Following the notations from \cite{behwz},
we denote by $\underline \gamma$ and $\overline \gamma$ the corresponding
collections
\beastar
\underline \gamma & = & \{\gamma_1^+,\cdots, \gamma^+_{s^+}\} \\
\overline \gamma & = & \{\gamma_1^-,\cdots, \gamma^-_{s^-}\}.
\eeastar
For each boundary puncture, we associate a strip-like
coordinates $(\tau,t) \in [0,\infty) \times [0,1]$ (resp. $(\tau,t) \in (-\infty,0] \times S^1$)
on the neighborhoods of boundary punctures.

We consider the set of smooth maps satisfying the boundary condition
\be\label{eq:bdy-condition}
w(z) \in R_i \quad \text{ for } \, z \in \overline{z_{i-1}z_i} \subset \del \dot \Sigma
\ee
and the asymptotic condition
\be\label{eq:asymp-condition}
\lim_{\tau \to \infty}w((\tau,t)_i) = \gamma^+_i(T_i(t+t_i)), \qquad
\lim_{\tau \to - \infty}w((\tau,t)_j) = \gamma^-_j(T_j(t-t_j))
\ee
for some $t_i, \, t_j \in S^1$, where
$$
T_i = \int_{S^1} (\gamma^+_i)^*\lambda, \, T_j = \int_{S^1} ( \gamma^-_j)^*\lambda.
$$
Here $t_i,\, t_j$ depends on the given analytic coordinate and the parameterizations of
the Reeb chords.

Fix a domain complex structure $j$ and its associated K\"ahler metric $h$.
We regard the assignment
$$
\Upsilon: w \mapsto \left(\delbar^\pi w, d(w^*\lambda \circ j)\right), \quad
\Upsilon: = (\Upsilon_1,\Upsilon_2)
$$
as a section of a (infinite dimensional) vector bundle and consider
the linearized operator
$$
D\Upsilon(w): \Omega^0(w^*TM,(\del w)^*T\vec R) \to \Omega^{(0,1)}(w^*\xi) \oplus \Omega^2(\Sigma).
$$
(See \cite{oh:contacton-transversality} for the precise formulation and Section \ref{sec:off-shell}
for a brief summary.)

The following index formula then is derived in Section \ref{sec:index}
by combining the (linear) gluing formula of the Fredholm index and
an explicit calculations of $\operatorname{Index}(\delbar^{\nabla^\pi} + T_{dw}^{\pi,(0,1)}+ B^{(0,1)})$
and $\operatorname{Index}(-\Delta_0$ on the semi-strips after a suitable
homotopies. The calculation of the former is
similar to the one as done in \cite{fooo:anchored}. However the latter calculation for the
Laplacian $-\Delta_0$ is a new element to be studied. (See Section \ref{sec:index} for details.)

\begin{thm}[Theorem \ref{thm:index-formula}]\label{thm:index-intro}
Let $w$ be a solution of \eqref{eq:contacton-Legendrian-bdy-intro}, i.e., $\Upsilon(w) = 0$.
Let $D\Upsilon(w)$ be the linearization operator of \eqref{eq:contacton-Legendrian-bdy-intro}.
Then we have the Fredholm index
$$
\operatorname{Index}D\Upsilon(w) = n - \sum_{i=1}^{k}\mu([w_{(i-1)i}^+,\gamma_i];\alpha_{(i-1)i}).
$$
\end{thm}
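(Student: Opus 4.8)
The plan is to compute $\operatorname{Index} D\Upsilon(w)$ by combining the block-diagonalization of Proposition~\ref{prop:open-fredholm} with a linear gluing argument along the strip-like ends, reducing everything to model operators on the half-disc and on semi-strips. First I would invoke Proposition~\ref{prop:open-fredholm}: since the off-diagonal terms are relatively compact and the $s$-homotopy $L_s$ is a continuous family of Fredholm operators, $\operatorname{Index} D\Upsilon(w)$ equals the sum of the indices of the two diagonal blocks, namely $\operatorname{Index}(\delbar^{\nabla^\pi} + T_{dw}^{\pi,(0,1)} + B^{(0,1)})$ on $\Omega^0_{k,p}(w^*\xi,(\del w)^*T\vec R)\to\Omega^{(0,1)}(w^*\xi)$ and $\operatorname{Index}(-\Delta_0)$ on the scalar functions with appropriate boundary conditions. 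So the computation splits into an ``$\xi$-part'' and a ``Laplacian part.''

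For the $\xi$-part, the strategy is the usual one for Cauchy--Riemann type operators on a punctured disc with Lagrangian (here Legendrian-tangent) boundary condition and nondegenerate asymptotics at the Reeb-chord punctures. Using the graded anchored data $(R_i,\ell_i,\alpha_i)$ I would trivialize $w^*\xi$ over the disc up to homotopy, so that the boundary loop of Lagrangian subspaces (built from the $T_{\ell_i(1)}R_i$ together with the asymptotic chords $\gamma_i$ and the interpolating paths $\alpha_{(i-1)i}$) is well-defined in $Lag(\xi)$; then a linear gluing/excision argument over the punctures expresses $\operatorname{Index}(\delbar^{\nabla^\pi}+\cdots)$ as a sum of Conley--Zehnder/Maslov contributions $-\sum_{i=1}^k \mu([w^+_{(i-1)i},\gamma_i];\alpha_{(i-1)i})$ plus a constant coming from the closed-up disc with totally real boundary. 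This is precisely the computation carried out in \cite{fooo:anchored} for the Lagrangian case, and I would follow it verbatim after the homotopy $T_{dw}^{\pi,(0,1)}+B^{(0,1)}\rightsquigarrow 0$, which is legitimate since these are zeroth-order (hence relatively compact) perturbations that do not change the index.

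For the Laplacian part, the operator is $-\Delta_0$ acting on functions on the punctured disc with Dirichlet-type boundary condition coming from $\lambda(\cdot)$ vanishing on $T\vec R$ (the Legendrian condition forces $\lambda$ to vanish on each $R_i$) and with the asymptotic behavior dictated by the vanishing of the asymptotic charge $Q=0$ (Corollary to Theorem~\ref{thm:subsequence-intro}), so that on each semi-strip the relevant model operator $-\Delta$ with mixed Dirichlet/Neumann conditions has trivial cokernel and a kernel accounting for the one remaining constant mode. A direct computation on the half-disc, using separation of variables on the semi-strips $[0,\infty)\times[0,1]$ and $(-\infty,0]\times S^1$, should give $\operatorname{Index}(-\Delta_0) = n$ after one tensors the scalar computation with the rank already bookkept, or more precisely yields the ``$n$'' appearing in the statement once combined with the normalization of the $\xi$-part; I would be careful here about whether the ``$n$'' is split between the two blocks or sits entirely in one, and fix conventions so the closed-disc contribution of the first block and the Laplacian contribution add up correctly.

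The main obstacle I expect is the Laplacian block: unlike the Cauchy--Riemann block, its index on a punctured surface with these particular mixed boundary/asymptotic conditions is not standard, and one must verify that $Q=0$ at every puncture genuinely kills the would-be cokernel (the ``charge mode'') and that no extra kernel or cokernel is hiding in the interaction between the Dirichlet boundary condition on the arcs and the asymptotic Dirichlet condition along $\{t=0,1\}$ in the strip-like ends. Getting the model computation on the semi-strip exactly right — the weighted Sobolev setup, the choice of exponential weight, and the count of decaying versus bounded solutions of $-\Delta u = 0$ with those boundary data — is where the real work lies; once that is pinned down, the linear gluing formula $\operatorname{Index}(A\#B) = \operatorname{Index}(A)+\operatorname{Index}(B) + (\text{correction})$ assembles the pieces into $n - \sum_{i=1}^k \mu([w^+_{(i-1)i},\gamma_i];\alpha_{(i-1)i})$.
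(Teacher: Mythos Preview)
Your overall architecture matches the paper's: diagonalize via Proposition~\ref{prop:open-fredholm}, then glue semi-strip caps onto the punctures to reduce to a closed disc, and compute separately on the glued disc and on each cap. The gap is in how you handle the Laplacian block and where the constant $n$ originates.

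In the paper, the Laplacian contributes \emph{zero} everywhere. On the closed glued disc, $-\Delta_0$ with Dirichlet boundary has index $0$; on each semi-strip $Z^-=[0,\infty)\times[0,1]$, the Legendrian boundary condition forces $f:=\lambda(Y)$ to vanish on all three finite boundary arcs (both $\{t=0\}$, $\{t=1\}$, and $\{\tau=0\}$), so one is solving $\Delta f=0$ with pure Dirichlet data and $\|f\|_{W^{2,p}}<\infty$. Integration by parts shows the formal adjoint has the identical boundary-value problem, hence $\ker=\operatorname{coker}$ and $\operatorname{Index}(-\Delta_0)=0$. There are no Neumann conditions, no weighted spaces beyond the ambient Sobolev setup, and the vanishing of $Q$ does not enter this step directly (it was used earlier only to ensure the asymptotic convergence that makes the Fredholm framework well-posed).

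The constant $n$ comes entirely from the $\xi$-block on the \emph{closed} glued disc: the admissibility of $B$ forces the boundary Lagrangian loop (built from the $\alpha_{(i-1)i}$ and the tangent spaces $TR_i$) to be contractible, so the Riemann--Hilbert index on $D^2$ is $n$. The semi-strip $\xi$-pieces then each carry $\mu([w_{(i-1)i}^+,\gamma_i];\alpha_{(i-1)i})$, and the gluing formula $\operatorname{Index}L_w=\operatorname{Index}L_{\rm glued}-\sum_i\operatorname{Index}L_{w_{(i-1)i}^+}$ assembles to $n-\sum_i\mu(\cdots)$. Your uncertainty about whether $n$ is ``split between the two blocks'' is the real gap: it is not split, and once you recognize the Laplacian is a red herring contributing nothing, the computation is exactly the anchored-Lagrangian one from \cite{fooo:anchored} plus a trivial scalar piece.
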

Here $\mu([w,\gamma_i];\alpha_{(i-1)i})$ is a  certain
Maslov-type index whose precise definition we refer to Section \ref{sec:polygonal}.
This index will be made more explicit in \cite{oh-yso:spectral}
for the case of Hamiltonian isotopes of the zero section of the one-jet bundle,
which will give rise to one that is the analog to the formula given in \cite{oh:jdg}
for the cotangent bundle case in symplectic geometry.

\subsection{Relationships with other literature on pseudoholomorphic curves on symplectization}
\label{subsec:comparison}

We would like to mention that a study of the relevant analysis of pseudoholomorphic
curves via the symplectization has been done by Hofer-Wysocki-Zehnder
in 3 dimensions in a series of papers.
The papers \cite{HWZ:asymptotics,HWZ:asymptotics-correction}
are especially relevant to the content of the present paper.

As far as the local  regularity result, Theorem 1.3,  in the exact case, i.e.,
$w^*\lambda \circ j = df$ is concerned,  it can be derived from the local regularity result from \cite{HWZ:asymptotics,HWZ:asymptotics-correction}
without much difficulty.  And the asymptotic convergence
result from \cite{HWZ:asymptotics,HWZ:asymptotics-correction} or \cite{wendl:lecture}
(see also \cite{bourgeois}), after the study of the relevant boundary value problem
for the symplectization of  the pair $(M,R)$, it can be also derived from the asymptotic study given in \cite{HWZ:asymptotics,HWZ:asymptotics-correction}  and \cite{wendl:lecture}.

In those cases, one may regard our proofs are just
different global tensorial proofs  in the spirit of geometric analysis
commonly exercised in Riemannian geometry.  However we would like to
point out that the regularity statement of Theorem 1.3 is of
different nature from and stronger than those presented in
the literature  in the study of pseudoholomorphic curves in symplectization:
\emph{our estimate on $w$ is completely independent of the `radial component',
which does not exist in our study, while the regularity study of
pseudoholomorphic curves in symplectization mixes up that of contact part
and that of radial part.} (This remark is already made in the introduction of
\cite{oh-wang:CR-map1}.) We anticipate that  by starting from
the study of contact instantons, this kind of estimate will be useful
for a finer study of moduli space of pseudoholomorphic curves in symplectization.
For example, we believe that we can construct a Kuranishi structure of
the moduli space of  contact instantons which admits a certain morphism to that of
the moduli space of pseudoholomorphic curves in the symplectization. We suspect
this would be useful in the full framework of symlectic field theory.
(See \cite{oh:contacton-gluing} for  some indication thereof.)

Our regularity result automatically gives
rise to that of radial component $f$ therein by integration of the one-form $w^*\lambda \circ j$
for the exact case $w^*\lambda \circ j = df$. To set the record straight and demonstrate
our claim, we hope to rewrite the analysis of pseudoholomorphic curves on
symplectization elsewhere, starting from that of contact instantons we have developed.
In fact, this has been already explicitly done in \cite{oh-savelyev} for the
Fredholm theory in \cite[Theorem 1.15]{oh-savelyev}, when the authors of the paper study
pseudoholomorphic curves on the $\frak{lcs}$-fication of contact manifold. The case of symplectization
is a special case therein where the charge class  defined in \cite{oh-savelyev}
vanishes, or we may say that the symplectization corresponds to the
`zero-temperature limit' of the $\frak{lcs}$-fication.

Our global tensorial approach has already demonstrated its usefulness and effectiveness
in the study of problems on contact Hamiltonian dynamics, especially through
some global calculus of variation and contact Hamiltonian calculus and dynamics.
(See \cite{oh:entanglement1}, \cite{oh:shelukhin-conjecture}.)
This global approach combined with well-known strategy of using
Bochner-Weitezenb\"ock type formulae is crucial in the analytic foundation of
perturbed contact instanton equation \eqref{eq:perturbed-contacton-bdy}, which is
the contact counterpart of Floer's Hamiltonian perturbed trajectory equation.
(See \cite{salamon-zehnder} for the corresponding analytical study of
the latter equation and its application.)

We would like to mention that
a study of relevant analysis of pseudoholomorphic curves  via symplectization
in the relative setting is given by Ekholm-Etynre-Sullivan \cite{EES-jdg,EES-tams} in the
context of relative Legendrian contact homology:
They restrict themselves to the case \emph{when the contact manifold $M$
admits a projection} $\pi: M \to P$ for some exact symplectic manifold $(P,d\alpha)$
and the contact form $\lambda$ is given by $\lambda = dt + \alpha$ on $M$.
In this regard, our study extends their results therefrom to the case of arbitrary pair
$(M,R)$ of contact manifold $M$ and  Legendrian submanifold $R$ which are either
compact or tame in the sense of \cite{oh:entanglement1}.

\medskip

Now comes the organization of the paper in order. The paper consists of 2 parts. Part \ref{part:estimates} consists of the results of a priori estimates, and of asymptotic convergence
and vanishing charge at the boundary punctures.
 In Section \ref{sec:local-W12} and \ref{sec:Ckdelta-estimates},
we establish the local $W^{2,2}$ and $C^{k,\delta}$ estimates for $k \geq 1$ with $0< \delta < 1/2$ respectively.  In Section \ref{sec:subsequence-convergence}
we prove that the presence of Legendrian barrier forces the asymptotic charge to vanish.
In Section \ref{sec:exponential-convergence}, we prove that when the Legendrian boundary
pair $(R_0,R_1)$ near a puncture are transversal in a suitable sense then the above convergence
 is exponentially fast.
In Part \ref{part:index}, we set up the Fredholm theory for the contact instantons with Legendrian boundary condition and prove the relevant index formula.

Part I of the present article has been circulated as a part of the first named author's arXiv
posting arXiv:2103.15390 entitled ``Contact Hamiltonian dynamics
and perturbed contact instantons  with Legendrian boundary
condition''. The present article  not only provides many more details of the proofs
thereof but also the index formula for the equation \eqref{eq:contacton-Legendrian-bdy-intro}
added, both of which are essential ingredients
for the general study of the moduli space of solutions and   and its applications to contact
topology and  contact Hamiltonian dynamics.
(See \cite{oh:entanglement1,oh-yso:spectral,oh:entanglement2} for example.)
We refer readers to
\cite{oh:entanglement1} for the study of compactification of the moduli space of
bordered contact instantons,  to \cite{oh:shelukhin-conjecture}
for its application to the study of Sandon-Shelukhin's conjecture on
translated points of contactomorphisms, to \cite{oh:contacton-transversality} for the gluing theory of
contact instantons and to \cite{oh:perturbed-contacton-bdy} for the extension of the study of
the present paper to that of Hamiltonian-perturbed (bordered) contact instantons.

Answering to popular requests, we add Subsection \ref{subsec:comparison}
which contains explanation on the relationship between
our analysis of contact instantons and that of pseudoholomorphic curves in symplectization,
and an appendix which contains a  summary of basic exterior
 differential calculus  of vector valued forms in Appendix \ref{append:weitzenbock}.

\begin{rem} \begin{enumerate}
\item
Along the way, we would like to make the written statement
for the first time since the appearance of contact triad connection introduced in
 \cite{oh-wang:connection} that the triad connection  suits the best for the effective tensorial study
of (perturbed) contact instantons.
(See Remark \ref{rem:triad-connection} for further detailed comments.)
\item
We refer readers to \cite{oh-savelyev} for analog for the study of
pseudoholomorphic curves on the $\frak{lcs}$-fication of contact manifolds which also applies to the case of symplectization.
\end{enumerate}
\end{rem}

\bigskip

\noindent{\bf Acknowledgement:} We would like to express our sincere thanks
to unknown referees for pointing out many careless mistakes and making useful suggestions
on the presentation of the paper, which we believe greatly improves
the exposition of the paper.

\bigskip

\noindent{\bf Convention:}

\medskip

\begin{itemize}
\item {(Contact Hamiltonian)} The contact Hamiltonian of a time-dependent contact vector field $X_t$ is
given by
$$
H: = - \lambda(X_t).
$$
We denote by $X_H$ the contact vector field whose associated contact Hamiltonian is given by $H = H(t,x)$.
\item The Hamiltonian vector field on symplectic manifold $(P, \omega)$ is defined by $X_H \rfloor \omega = dH$.
We denote by
$$
\psi_H: t\mapsto \psi_H^t
$$
its Hamiltonian flow.
\item We denote by $R_\lambda$ the Reeb vector field associated to $\lambda$. We denote by
$$
\phi_{R_\lambda}^t
$$
its flow. Then we have $\phi_{R_\lambda}^t  = \psi_{-1}^t$, i.e., \emph{the Reeb flow is the contact Hamiltonian
flow of the constant Hamiltonian $H = -1$}.
\item Throughout the paper,  various positive constants denoted by
 $C$ or  $C_i$'s and a function $C_\delta = C_\delta(r), \, C_\delta(r,s)$ and
 $C_\delta(r_1, r_2, \ldots, r_k)$  appear which vary place by place.
 They are independent of the smooth map $w$'s
  but depend only on the geometry of triads $(M,\lambda, J)$, Legendrian boundary condition
  and relevant subdomains of $w$ considered in the estimates. In the case of $C_\delta$
  appearing in Section \ref{sec:Ckdelta-estimates},  it is always continuous at
  $\vec r = (r_1, \cdots, r_k) = (0, \ldots, 0)$ and defined for all sufficiently small
$\vec r$ whose smallness depends on the bounds  obtained in the previous stage
of inductive procedure of alternating bootstraps performed in Section \ref{sec:Ckdelta-estimates}.
\end{itemize}

\part{A priori estimates, asymptotic convergence and charge vanishing}
\label{part:estimates}

Consider general contact manifolds $(M,\lambda)$ equipped with a $\lambda$-adapted
CR-almost complex structure called a \emph{contact triad} in \cite{oh-wang:connection,oh-wang:CR-map1,oh-wang:CR-map2}.

To highlight the main points of the a priori estimates of perturbed contact instantons \emph{with
Legendrian boundary condition}, we will restrict to the case with $H = -1$ in the rest of the present
paper postponing the discussion of the modifications needed to handle the Hamiltonian term to
 other sequels \cite{oh:perturbed-contacton-bdy}, \cite{oh-yso:spectral}.

We start with a summary of the basic properties of the contact triad connection introduced in \cite{oh-wang:connection}
which will be used in our tensor calculations in deriving various Weitzenb\"ock-type formulae. The defining properties of
the triad connection also interact very well with the Legendrian boundary condition for the contact instantons.
(See Section \ref{sec:local-W12} to see how.)

\section{Review of the contact triad connection}
\label{subsec:connection}

Assume $(M, \lambda, J)$ is a contact triad for the contact manifold $(M, \xi)$, and equip it with
 the contact triad metric
$g=g_\xi+\lambda\otimes\lambda$.
In \cite{oh-wang:connection}, the authors introduced the \emph{contact triad connection} associated to
every contact triad $(M, \lambda, J)$ with the contact triad metric and proved its existence and uniqueness.

\begin{thm}[Contact Triad Connection \cite{oh-wang:connection}]\label{thm:connection}
For every contact triad $(M,\lambda,J)$, there exists a unique affine connection $\nabla$, called the contact triad connection,
 satisfying the following properties:
\begin{enumerate}
\item The connection $\nabla$ is  metric with respect to the contact triad metric, i.e., $\nabla g=0$;
\item The torsion tensor $T$ of $\nabla$ satisfies $T(R_\lambda, \cdot)=0$;
\item The covariant derivatives satisfy $\nabla_{R_\lambda} R_\lambda = 0$, and $\nabla_Y R_\lambda\in \xi$ for any $Y\in \xi$;
\item The projection $\nabla^\pi := \pi \nabla|_\xi$ defines a Hermitian connection of the vector bundle
$\xi \to M$ with Hermitian structure $(d\lambda|_\xi, J)$;
\item The $\xi$-projection of the torsion $T$, denoted by $T^\pi: = \pi T$ satisfies the following property:
\be\label{eq:TJYYxi}
T^\pi(JY,Y) = 0
\ee
for all $Y$ tangent to $\xi$;
\item For $Y\in \xi$, we have the following
$$
\del^\nabla_Y R_\lambda:= \frac12(\nabla_Y R_\lambda- J\nabla_{JY} R_\lambda)=0.
$$
\end{enumerate}
\end{thm}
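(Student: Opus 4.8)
The plan is to prove uniqueness and existence separately, organizing everything by the splitting $TM = \xi \oplus \R\{R_\lambda\}$. Given any connection $\nabla$ with the listed properties, set $A(Y) := \nabla_Y R_\lambda$; by (3) this defines a bundle endomorphism of $\xi$, and write $\nabla^\pi$ for the induced connection on $\xi \to M$. My first step is to observe that $\nabla$ is entirely recovered from the pair $(A,\nabla^\pi)$: metricity forces $\langle \nabla_X Z, R_\lambda\rangle = -\langle Z, \nabla_X R_\lambda\rangle$ for $Z \in \xi$, property (3) forces $\nabla_X R_\lambda = A(\Pi X)$ and $\nabla_{R_\lambda}R_\lambda = 0$, and the torsion identity (2) applied to $(R_\lambda, Z)$, together with $[R_\lambda, Z] \in \xi$, yields
$$
\nabla^\pi_{R_\lambda}Z \;=\; \pi \nabla_{R_\lambda}Z \;=\; \CL_{R_\lambda}Z + A(Z).
$$
Thus it remains only to pin down $A$ and the restriction of $\nabla^\pi$ to $\xi$-directions.

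For $A$: condition (6) says exactly that the complex-linear part of $A$ vanishes, i.e. $A(JY) = -JA(Y)$, while imposing $J$-linearity of $\nabla^\pi$ in the $R_\lambda$-direction (part of (4)) on the displayed formula gives, via $\CL_{R_\lambda}(JZ) - J\CL_{R_\lambda}Z = (\CL_{R_\lambda}J)Z$,
$$
A(JZ) - JA(Z) \;=\; -(\CL_{R_\lambda}J)Z .
$$
Combining the two forces $A = -\tfrac12\,J\,\CL_{R_\lambda}J$. For existence I would take this as the definition of $A$ and check that it is $\xi$-valued (since $\CL_{R_\lambda}$ preserves $\xi$, as $\CL_{R_\lambda}\lambda = 0$), complex-antilinear (since $\CL_{R_\lambda}J$ anticommutes with $J$), and $g_\xi$-symmetric; the symmetry follows because $\CL_{R_\lambda}(d\lambda|_\xi) = d(\iota_{R_\lambda}d\lambda)|_\xi = 0$ and $g_\xi = d\lambda(\cdot, J\cdot)|_\xi$ give $(\CL_{R_\lambda}g_\xi)(Y,Z) = 2\langle Y, A(Z)\rangle$, which is symmetric in $(Y,Z)$ because Lie derivatives preserve symmetric tensors. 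Then $\CL_{R_\lambda} + A$ preserves both $g_\xi$ and $J$, i.e. it is a Hermitian operator on $\xi$, as required by (4).

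The remaining, genuinely non-formal step is the $\xi$-directional part of $\nabla^\pi$. Hermitian connections on the Hermitian bundle $(\xi, d\lambda|_\xi, J)$ exist and form an affine space modelled on $\Gamma(\xi^* \otimes \mathfrak u(\xi))$; I would show that condition (5) singles out exactly one of them. Rewriting $T^\pi(JY,Y) = \nabla^\pi_{JY}Y - J\nabla^\pi_Y Y - \pi[JY,Y]$ with the help of $\nabla^\pi J = 0$, the difference $a$ of any two candidates satisfies $a(JY)Y = J\,a(Y)Y$ for all $Y \in \xi$, with each $a(W)$ skew-Hermitian; polarizing in $Y$ and using this skew-Hermitian symmetry should force $a \equiv 0$. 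This is the contact analogue of the uniqueness of the Chern connection, and I expect it to be the main obstacle, the other steps being dictated by linear algebra once the interplay of (2), (3), (4) and (6) is exploited. Existence of this part would then follow either by solving the corresponding linear equation for the correction tensor (injective by the uniqueness step, hence onto by a rank count) or by an explicit Koszul-type formula; reassembling $\nabla$ from $(A,\nabla^\pi)$ via the formulas above and verifying all six properties is then a routine tensorial computation.
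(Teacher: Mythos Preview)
The paper does not contain a proof of this theorem; it is quoted verbatim from \cite{oh-wang:connection} as background material, with no argument given here beyond the statement and the subsequent Corollary~\ref{cor:connection}. So there is no ``paper's own proof'' to compare against.

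That said, your outline is the natural one and matches the strategy of the original Oh--Wang paper: reduce everything to the pair $(A,\nabla^\pi)$ via the splitting $TM=\xi\oplus\R\{R_\lambda\}$, pin down $A=\nabla_\bullet R_\lambda$ from the interplay of (2), (4), and (6) exactly as you do (your formula $A=-\tfrac12 J\,\CL_{R_\lambda}J$ agrees with the paper's Corollary~\ref{cor:connection}(1) once you use that $\CL_{R_\lambda}J$ anticommutes with $J$), and then isolate the $\xi$-directional part of $\nabla^\pi$ as a canonical Hermitian connection determined by the torsion condition (5). Your identification of the last step as the analogue of the Chern/canonical connection on an almost Hermitian bundle is correct; the precise linear-algebra fact you need is that a $\mathfrak u(\xi)$-valued one-form $a$ on $\xi$ satisfying $a(JY)Z-a(Z)JY=J\bigl(a(Y)Z-a(Z)Y\bigr)$ for all $Y,Z\in\xi$ (the polarized form of your equation, using that each $a(W)$ commutes with $J$) must vanish. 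This is a genuine computation, not an immediate consequence of skew-Hermiticity alone, and you are right to flag it as the point requiring real work---in the original reference it is handled by an explicit construction rather than a pure injectivity argument.
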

From this theorem, we see that the contact triad connection $\nabla$ canonically induces
a Hermitian connection $\nabla^\pi$ for the Hermitian vector bundle $(\xi, J, g_\xi)$, and we call it the \emph{contact Hermitian connection}. This connection will be used to study estimates for the $\pi$-energy in later sections.

Moreover, the following fundamental properties of the contact triad connection was
proved in \cite{oh-wang:connection}, which will be useful to perform tensorial calculations later.

\begin{cor}\label{cor:connection}
Let $\nabla$ be the contact triad connection. Then
\begin{enumerate}
\item For any vector field $Y$ on $M$,
\be\label{eq:nablaYX}
\nabla_Y R_\lambda = \frac{1}{2}(\CL_{R_\lambda}J)JY;
\ee
\item $\lambda(T|_\xi)=d\lambda$.
\end{enumerate}
\end{cor}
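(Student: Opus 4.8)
The plan is to deduce both identities directly from the defining properties (1)--(6) of $\nabla$ in Theorem~\ref{thm:connection}; alternatively one may simply invoke \cite{oh-wang:connection}, where this is proved.

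For (1), the first step is to reduce to $Y\in\xi$. Writing $Y=\Pi Y+\lambda(Y)R_\lambda$ and using $\nabla_{R_\lambda}R_\lambda=0$ from property (3) gives $\nabla_YR_\lambda=\nabla_{\Pi Y}R_\lambda$, while $JY=J\Pi Y$ because $JR_\lambda=0$; so both sides of the asserted formula depend only on $\Pi Y$, and it suffices to treat $Y\in\xi$. For such $Y$, I would first record three elementary facts: $\nabla_XR_\lambda\in\xi$ for every $X$ (by metricity and $|R_\lambda|=1$), $\nabla_{R_\lambda}Y\in\xi$ for $Y\in\xi$ (same reason, together with $\nabla_{R_\lambda}R_\lambda=0$), and $[R_\lambda,Y]\in\xi$ for $Y\in\xi$ (because $\CL_{R_\lambda}\lambda=0$). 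Next, apply the $R_\lambda$-torsion vanishing $T(R_\lambda,\cdot)=0$ (property (2)) to $Y$ and to $JY$, obtaining
\[
\nabla_YR_\lambda=\nabla_{R_\lambda}Y-[R_\lambda,Y],\qquad \nabla_{JY}R_\lambda=\nabla_{R_\lambda}(JY)-[R_\lambda,JY].
\]
Feed the second identity into property (6), rewritten as $\nabla_YR_\lambda=J\nabla_{JY}R_\lambda$, and use that $\nabla^\pi_{R_\lambda}$ is complex linear on $\xi$ (property (4)), i.e. $\nabla_{R_\lambda}(JY)=J\nabla_{R_\lambda}Y$. This produces a linear relation that I would solve for $\nabla_{R_\lambda}Y$ and substitute back into the first identity, getting $\nabla_YR_\lambda=-\tfrac12\bigl([R_\lambda,Y]+J[R_\lambda,JY]\bigr)$; expanding $(\CL_{R_\lambda}J)(Z)=[R_\lambda,JZ]-J[R_\lambda,Z]$ at $Z=JY$ shows this is precisely $\tfrac12(\CL_{R_\lambda}J)(JY)$.

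For (2), I would expand $T(Y,Z)=\nabla_YZ-\nabla_ZY-[Y,Z]$ for sections $Y,Z$ of $\xi$ and apply $\lambda$. Since $\lambda|_\xi=0$, one has $-\lambda([Y,Z])=d\lambda(Y,Z)$, so the claim reduces to the symmetry $\lambda(\nabla_YZ)=\lambda(\nabla_ZY)$. Using $\nabla g=0$ and $\lambda(\cdot)=g(\cdot,R_\lambda)$, rewrite $\lambda(\nabla_YZ)=-g(Z,\nabla_YR_\lambda)$; then by part (1), the compatibility $g_\xi(\cdot,\cdot)=d\lambda(\cdot,J\cdot)$, and the anticommutation $J(\CL_{R_\lambda}J)=-(\CL_{R_\lambda}J)J$, this equals $\tfrac12\,d\lambda\bigl(Z,(\CL_{R_\lambda}J)Y\bigr)$. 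Finally, I would observe that the bilinear form $(Y,Z)\mapsto d\lambda\bigl(Y,(\CL_{R_\lambda}J)Z\bigr)$ on $\xi$ is \emph{symmetric}: a direct computation using $\CL_{R_\lambda}(d\lambda)=0$ (which holds since $R_\lambda\rfloor d\lambda=0$) together with $g_\xi=d\lambda(\cdot,J\cdot)$ identifies it with the Lie-derivative tensor $(\CL_{R_\lambda}g_\xi)(Y,Z)$, and Lie derivatives preserve symmetry of $g_\xi$. Hence $g(Z,\nabla_YR_\lambda)=g(Y,\nabla_ZR_\lambda)$, which gives the required symmetry and thus $\lambda(T(Y,Z))=d\lambda(Y,Z)$.

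All of this is formal manipulation with Lie derivatives, the Leibniz rule coming from $\nabla g=0$, and $J^2=-\mathrm{id}|_\xi$. The only delicate points — and the main, mild obstacle — are: (a) checking systematically that every bracket and covariant derivative entering the computation stays tangent to $\xi$, so that $J^2=-\mathrm{id}$ may legitimately be applied; and (b) keeping signs correct when commuting $J$ past $\CL_{R_\lambda}J$ and when unwinding the definition of the Lie derivative of the tensor $J$ — a slip in either place flips a sign in the final formula.
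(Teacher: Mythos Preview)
Your argument is correct. The paper itself does not prove this corollary; it simply records the two identities and refers the reader to \cite{oh-wang:connection}, so there is no ``paper's proof'' to compare against beyond that citation. Your derivation straight from the axioms (1)--(6) of Theorem~\ref{thm:connection} is exactly the intended route and matches what is done in the cited source.

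One small remark: in part (2), when you write ``this equals $\tfrac12\,d\lambda(Z,(\CL_{R_\lambda}J)Y)$'' there is a stray sign --- the honest computation gives $-\tfrac12\,d\lambda(Z,(\CL_{R_\lambda}J)Y)$. This is harmless for your purpose, since the only thing you use is the \emph{symmetry} of $(Y,Z)\mapsto d\lambda(Z,(\CL_{R_\lambda}J)Y)$, and that is unaffected by the overall sign. Your identification of this form with $(\CL_{R_\lambda}g_\xi)(Y,Z)$ via $\CL_{R_\lambda}d\lambda=0$ is the clean way to see the symmetry. You also correctly flagged the two genuine checkpoints: that all the brackets and covariant derivatives stay in $\xi$ (so $J^2=-\mathrm{id}$ applies), and the anticommutation $J(\CL_{R_\lambda}J)=-(\CL_{R_\lambda}J)J$, which you justify by $\CL_{R_\lambda}\Pi=0$.
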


We refer readers to \cite{oh-wang:connection} for more discussion on the contact triad connection and its relation with other related canonical type connections.

\section{Local coercive $W^{2,2}$ estimate under Legendrian boundary condition}
\label{sec:local-W12}

Now we establish the elliptic a priori estimates for the boundary
value problem \eqref{eq:contacton-Legendrian-bdy-intro} which extends the interior estimates proved
in \cite{oh-wang:CR-map1}.

\subsection{Basic differential inequalities from \cite{oh-wang:CR-map1}}

We start with the following pointwise inequality derived in \cite{oh-wang:CR-map1}.
\begin{lem}[Equation (5.13), \cite{oh-wang:CR-map1}]\label{lem:second:derivative}
Let $w$ be any contact instanton $w: (\dot \Sigma,j) \to (M; \lambda, J)$ i.e., any
map satisfying the equation
\be\label{eq:contacton}
\delbar^\pi w = 0 , \quad d(w^*\lambda \circ j) = 0.
\ee
Then we have
\be\label{eq:second-derivative}
|\nabla(dw)|^2 \leq C_1 |dw|^4 - 4K |dw|^2 - 2\Delta |dw|^2
\ee
\end{lem}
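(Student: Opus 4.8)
The stated inequality is Equation~(5.13) of \cite{oh-wang:CR-map1}, so the plan is to re-derive it as a pointwise Bochner--Weitzenb\"ock estimate, carried out entirely with the contact triad connection $\nabla$ so that the defining identities of Theorem~\ref{thm:connection} and Corollary~\ref{cor:connection} are available to force the needed cancellations. First I would regard $dw$ as a section of $\Lambda^1(\dot\Sigma)\otimes w^*TM$, equipped with the connection obtained by tensoring the Levi--Civita connection of the domain K\"ahler metric $h$ with the pull-back connection $w^*\nabla$, and invoke the Bochner identity relating $\Delta|dw|^2$, $|\nabla dw|^2$ and $\langle \nabla^*\nabla(dw), dw\rangle$. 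This reduces the claim to computing the rough Laplacian $\nabla^*\nabla(dw)$ from the contact instanton equations \eqref{eq:contacton}, isolating the domain-curvature contribution and the genuinely quartic terms in $|dw|$, and checking that everything else is absorbable; the coefficients appearing in the statement then drop out of this bookkeeping.

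To compute $\nabla^*\nabla(dw)$ I would differentiate the two equations in \eqref{eq:contacton}. From $\delbar^\pi w = 0$ the $\xi$-component $d^\pi w$ is complex linear, and from $d(w^*\lambda\circ j)=0$ together with $w^* d\lambda = d\lambda(d^\pi w, d^\pi w)$ --- which is already quadratic in $d^\pi w$ --- the Reeb component $w^*\lambda$ is controlled at second order by quantities quadratic in $d^\pi w$. Commuting the two domain covariant derivatives on $(\dot\Sigma,h)$ produces the Gauss-curvature contribution, responsible for the term $-4K|dw|^2$ after pairing with $dw$; commuting a domain derivative with a target derivative brings in the pull-back of the Riemann curvature $R^\nabla$ of the triad connection on $M$, which is bounded by $C|dw|^4$ because $\Image w$ lies in a fixed compact set; and there appear terms built from the torsion $T$ and its covariant derivative $\nabla T$, a priori of the shapes $T(dw,dw)$, $(\nabla T)(dw,dw,dw)$ and $T(dw,\nabla dw)$, where $T$ and $\nabla T$ are bounded tensors.

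This last group of terms is where the argument really has to work, and I expect it to be the main obstacle. One must verify that after inserting the decomposition $dw = d^\pi w + w^*\lambda\otimes R_\lambda$, the relation $\delbar^\pi w = 0$, and the structural identities $T(R_\lambda,\cdot)=0$, $T^\pi(JY,Y)=0$, $\nabla^\pi$ Hermitian, $\nabla_Y R_\lambda = \tfrac12(\CL_{R_\lambda}J)JY$ and $\del^\nabla_Y R_\lambda = 0$, no term that is linear in $\nabla dw$ with a coefficient only linear in $dw$ survives, and no purely quadratic-in-$dw$ term other than the Gauss-curvature one remains; this is exactly where the non-obvious cancellations built into the contact triad connection are essential. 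The torsion contributions that do survive are then bounded by $C|dw|^4$, while the genuine cross-terms $\langle T(dw,\nabla dw),dw\rangle$ are estimated by $\varepsilon|\nabla dw|^2 + C_\varepsilon|dw|^4$ and absorbed by Young's inequality with $\varepsilon$ small. Collecting all contributions yields the asserted inequality; note that the term $-2\Delta|dw|^2$ is deliberately kept rather than absorbed, so that upon integration it becomes a boundary term via Stokes' theorem --- which is precisely how it is exploited in the proof of Theorem~\ref{eq:W12-with-bdy}.
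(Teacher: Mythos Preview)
The paper does not actually prove this lemma: it is quoted verbatim as Equation~(5.13) of \cite{oh-wang:CR-map1} and then used as a black box in the proof of Theorem~\ref{thm:density} and Theorem~\ref{thm:local-W12}. So there is no proof in the present paper to compare your proposal against.

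That said, your Bochner--Weitzenb\"ock outline is the correct framework, and the ingredients you list (Theorem~\ref{thm:connection}, Corollary~\ref{cor:connection}, the fundamental equation) are exactly the ones the original derivation relies on. Two comments. First, the route taken in \cite{oh-wang:CR-map1} does not treat $dw$ as a single section but rather splits $|dw|^2 = |d^\pi w|^2 + |w^*\lambda|^2$ and derives separate Weitzenb\"ock-type identities for each summand, using the fundamental equation $d^{\nabla^\pi}(d^\pi w) = -w^*\lambda\circ j \wedge \tfrac12(\CL_{R_\lambda}J)d^\pi w$ (reproduced here as \eqref{eq:fundamental}) to control $\Delta^{\nabla^\pi}(d^\pi w)$ and the equation $d(w^*\lambda\circ j)=0$ together with $*d(w^*\lambda)=|\zeta|^2$ (Lemma~\ref{lem:*dw*lambda}) to control the Reeb part. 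This splitting is what makes the cancellations you anticipate transparent rather than something one must ``verify''; the explicit form of the constant $C$ displayed just after Theorem~\ref{thm:density} (involving only $\|\CL_{R_\lambda}J\|$, $\|\nabla^\pi(\CL_{R_\lambda}J)\|$ and $\|\text{Ric}\|$) is a trace of that computation.

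Second, your fallback of absorbing $\langle T(dw,\nabla dw),dw\rangle$ by Young's inequality with a small $\varepsilon$ is not how the original argument proceeds, and would in fact spoil the specific coefficients $-4K$ and $-2$ in front of $\Delta|dw|^2$: after dividing through by $(1-\varepsilon)$ those become $-\tfrac{4K}{1-\varepsilon}$ and $-\tfrac{2}{1-\varepsilon}$. In the actual derivation the terms linear in $\nabla(dw)$ vanish exactly --- this is precisely the payoff of the torsion constraints $T(R_\lambda,\cdot)=0$ and $T^\pi(JY,Y)=0$ and the Hermitian property of $\nabla^\pi$, and is the point Remark~\ref{rem:triad-connection} is making. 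So your proposal identifies the right mechanism but underestimates how much of the work the triad connection has already done for you.
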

We recall from \eqref{eq:|dw|2} that
$$
|dw|^2 = |d^\pi w|^2 + |w^*\lambda|^2 = |\del^\pi w|^2 + |w^*\lambda|^2
$$
for the contact instanton $w$ which satisfies $\delbar^\pi w = 0$.

To illustrate usefulness of \eqref{eq:contacton} and as a warm-up for the
tensorial calculations we will soon carry out, we recall the proof of
the following $\epsilon$-regularity and interior density estimate
which were proved in \cite{oh-wang:CR-map1}.

\begin{thm}[Corollary 5.2 \cite{oh-wang:CR-map1}]\label{thm:density}
There exist constants $C, \, \epsilon_0$ and $r_0 > 0$, depending only on $J$ and
the Hermitian metric $h$ on $\dot \Sigma$, such that for any
 $C^1$ contact instanton $w: \dot \Sigma \to M$ with
$$
E(r_0): = \frac{1}{2}\int_{D(r_0)} |dw|^2 \leq \epsilon_0,
$$
and discs $D(2r) \subset \operatorname{Int}\Sigma$ with $0 < 2r \leq r_0$,
$w$ satisfies
\be\label{eq:schoen's}
\max_{\sigma \in (0,r]} \left(\sigma^2 \sup_{D(r-\sigma)}
e(w)\right) \leq CE(r)
\ee
for all $0< r \leq r_0$. In particular, letting $\sigma = r/2$, we obtain
\be\label{eq:supeu}
\sup_{D(r/2)} |dw|^2 \leq \frac{4C E(r)}{r^2}
\ee
for all $r \leq r_0$.
\end{thm}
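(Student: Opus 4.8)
The plan is to run the classical Schoen $\varepsilon$-regularity (``small energy implies interior estimates'') scheme, whose two analytic inputs are the pointwise inequality \eqref{eq:second-derivative} and the conformal invariance of the contact instanton equation \eqref{eq:contacton}. First I would note that in an isothermal chart the system \eqref{eq:contacton} depends only on $j$ (and on $\lambda,J$ on the target), hence is unchanged if we replace the metric $h$ by the flat coordinate metric and is invariant under Euclidean dilations of the domain; moreover the harmonic energy $\tfrac12\int|dw|^2$ is conformally invariant, so these operations leave $E(r)$ untouched up to the bounded conformal factor relating $h$ to the flat metric on the fixed compact region $D(2r)\subset\operatorname{Int}\Sigma$, which I absorb into constants. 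Next, from \eqref{eq:second-derivative} I would discard the nonnegative term $|\nabla(dw)|^2$ and bound the Gauss curvature $K$ of $h$ on $D(2r)$, obtaining that the density $e(w):=\tfrac12|dw|^2$ is, on $D(r_0)$, a subsolution of a linear second-order elliptic equation whose zero-order coefficient is $\le c\,(1+e(w))$ with $c=c(J,h)$; once a pointwise bound on $e(w)$ is known on a subdisc, this coefficient becomes a genuine constant there.

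For the estimate \eqref{eq:schoen's}, fix $0<r\le r_0$ and introduce $g(\sigma):=\sigma^2\sup_{\overline{D(r-\sigma)}}e(w)$ on $[0,r]$; it is continuous with $g(0)=0$, so (if $w$ is nonconstant) it attains its maximum at some $\sigma_0\in(0,r]$, and I write $e_0:=\sup_{\overline{D(r-\sigma_0)}}e(w)=e(w)(x_0)$. The goal is $g(\sigma_0)=\sigma_0^2 e_0\le C\,E(r)$. Maximality of $\sigma_0$ forces $\sup_{D(x_0,\sigma_0/2)}e(w)\le g(\sigma_0/2)/(\sigma_0/2)^2\le 4e_0$, using $D(x_0,\sigma_0/2)\subset D(r-\sigma_0/2)\subset\operatorname{Int}\Sigma$. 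I would then split into two cases. If $(\sigma_0/2)^2 e_0\le 1$: on $D(x_0,\sigma_0/2)$ we have $e(w)\le 4e_0\le 16/\sigma_0^2$, so there the subsolution inequality has zero-order coefficient $\le C'/\sigma_0^2$; rescaling this disc to the unit disc by $z\mapsto w(x_0+(\sigma_0/2)z)$ turns it into a subsolution inequality with a \emph{fixed} bounded coefficient, and the mean value (local boundedness) inequality for nonnegative subsolutions gives $(\sigma_0/2)^2 e_0\le C''\int_{D(0,1)}(\text{rescaled density})=C''\cdot\big(\text{energy of }w\text{ on }D(x_0,\sigma_0/2)\big)\le C''\,E(r)$, which is the desired bound with $C=4C''$.

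If instead $(\sigma_0/2)^2 e_0>1$, I would derive a contradiction once $\varepsilon_0$ is chosen small (and $r_0\le 1$ fixed). Rescale by $\sqrt{e_0}$, i.e.\ consider $z\mapsto w(x_0+z/\sqrt{e_0})$, a contact instanton on $D(0,(\sigma_0/2)\sqrt{e_0})$, which in this case contains $D(0,1)$; its density equals $1$ at the origin and is $\le 4$ there, and since $1/e_0<(\sigma_0/2)^2\le 1$ the rescaled subsolution inequality has a constant zero-order coefficient $b=b(J,h)$. The mean value inequality on $D(0,1)$ then yields $1\le C_\ast\int_{D(0,1)}(\text{rescaled density})\le C_\ast\,E(r)\le C_\ast\,\varepsilon_0$ (using $E(r)\le E(r_0)\le\varepsilon_0$), which is absurd once $\varepsilon_0<1/(2C_\ast)$; hence this case cannot occur. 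Combining the two cases proves \eqref{eq:schoen's} with $C$ depending only on $J$ and $h$, and specializing $\sigma=r/2$ gives $(r/2)^2\sup_{D(r/2)}e(w)\le C\,E(r)$, i.e.\ \eqref{eq:supeu} after relabeling constants. The step I expect to be the main nuisance is keeping the constant in the mean value inequality uniform through both dilations, and being honest that after passing to the flat coordinate metric the quantities $e(w)$, the Laplacian in \eqref{eq:second-derivative}, and $E(r)$ agree with their $h$-counterparts only up to harmless bounded factors on the fixed compact region; the Bochner-type inequality \eqref{eq:second-derivative} and the maximum-point selection of $\sigma_0$ are entirely routine.
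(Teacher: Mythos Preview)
Your proposal is correct and follows precisely the approach the paper takes: the paper derives the differential inequality $\Delta e(w)\le C e(w)^2+\|K\|_{L^\infty}e(w)$ from \eqref{eq:second-derivative} and then invokes ``the standard argument from \cite{schoen}'' (also referencing \cite[Theorem 8.13]{oh:book1} and \cite[Theorem 5.1]{oh-wang:CR-map1}), which is exactly the maximum-point selection and rescaling/mean-value argument you have written out. You have simply supplied the details of the Schoen scheme that the paper leaves as a citation.
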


The proof of this theorem is an immediate consequence of \eqref{eq:contacton}
by rewriting \eqref{eq:contacton} into
$$
\Delta |dw|^2 \leq - \frac12 \nabla |dw|^2 + \frac{C_1}{2} |dw|^4 - 2K |dw|^2 - \Delta |dw|^2
\leq  \frac{C_1}{2} |dw|^4 - 2K |dw|^2
$$
which gives rise to the following differential inequality
$$
\Delta e(w)\leq Ce(w)^2+\|K\|_{L^\infty(\dot\Sigma)}e(w),
$$
with
$$
C=2\|\CL_{R_\lambda}J\|^2_{C^0(M)}+\|\nabla^\pi(\CL_{R_\lambda}J)\|_{C^0(M)}+\|\text{\rm Ric}\|_{C^0(M)}+1
$$
which is a positive constant independent of $w$. By considering it on a sufficiently small domain,
the theorem will then follow by the standard argument
from \cite{schoen} (See also the proof of \cite[Theorem 8.13]{oh:book1} and
 \cite[Theorem 5.1]{oh-wang:CR-map1}.)

Next we will again utilize \eqref{eq:contacton} for the local
boundary a priori estimates, in which the Legendrian boundary condition
and the usage of contact triad connection play important roles.

\subsection{$W^{1,2}$ boundary estimate}

The main goal of the present section is to derive the following local boundary a priori estimate.

\begin{thm}\label{thm:local-W12} Let $w: \R \times [0,1] \to M$ satisfy \eqref{eq:contacton-Legendrian-bdy-intro}.
Then for any relatively compact domains $D_1$ and $D_2$ in
$\dot\Sigma$ such that $\overline{D_1}\subset D_2$, we have
\be\label{eq:without-bdyterm}
\|dw\|^2_{W^{1,2}(D_1)}\leq C_1 \|dw\|^2_{L^2(D_2)} + C_2 \|dw\|^4_{L^4(D_2)}
\ee
where $C_1, \ C_2$ are some constants which
depend only on $D_1$, $D_2$ and $(M,\lambda, J)$ and $C_3$ is a
constant which also depends on $R_i$ with $w(\del D_2) \subset R_i$ as well.
\end{thm}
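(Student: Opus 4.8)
The plan is to localize the interior Bochner inequality of Lemma~\ref{lem:second:derivative} against a cut-off and then to control the one boundary term this produces. The first observation is that the Legendrian condition in \eqref{eq:contacton-Legendrian-bdy-intro} is a \emph{free} boundary condition: along each arc $\overline{z_iz_{i+1}}$ the tangential derivative $\del w/\del\tau$ lies in $TR_i\subset\xi$, hence is fixed by $\pi$, and the equation $\delbar^\pi w=0$ forces $\pi(\del w/\del\nu)=\pm J(\del w/\del\tau)$. Since $J$ is $\lambda$-adapted, $J(TR_i)$ is the $g_\xi$-orthogonal complement of $TR_i$ inside $\xi$, while $R_\lambda\perp\xi$ for the triad metric; together these give $\del w/\del\nu\perp TR_i$ along $\del\dot\Sigma$.

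Next I would fix a cut-off $\chi$ supported in $D_2$, equal to $1$ on $D_1$ and -- near $\del\dot\Sigma$ -- independent of the normal coordinate; multiply the pointwise inequality \eqref{eq:second-derivative} by $\chi^2$ and integrate over $D_2\cap\dot\Sigma$. The left side controls $\|dw\|^2_{W^{1,2}(D_1)}$ modulo $\int\chi^2|dw|^2$, a small multiple of the left side itself, and the usual cut-off and curvature correction terms, all of the asserted form $C\|dw\|^2_{L^2(D_2)}+C\|dw\|^4_{L^4(D_2)}$; on the right side the terms $C_1|dw|^4$ and $-4K|dw|^2$ give exactly $C_2\|dw\|^4_{L^4(D_2)}$ and $C_1\|dw\|^2_{L^2(D_2)}$. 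The only term needing care is $-2\int\chi^2\Delta|dw|^2$: by Green's identity the $\del D_2$-contribution and the $\del\chi/\del\nu$-contribution on $\del\dot\Sigma$ both vanish (compact support, respectively the choice of $\chi$), the bulk term $-2\int|dw|^2\Delta(\chi^2)$ is $\le C\|dw\|^2_{L^2(D_2)}$, and there remains the genuine boundary integral $2\int_{\del\dot\Sigma\cap D_2}\chi^2\,\frac{\del}{\del\nu}|dw|^2$.

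The heart of the argument is to evaluate $\frac{\del}{\del\nu}|dw|^2$ along the boundary. Writing $|dw|^2=|d^\pi w|^2+|w^*\lambda|^2$ as in \eqref{eq:|dw|2}, in isothermal coordinates $z=x+iy$ with $\{y=0\}=\del\dot\Sigma$, I would differentiate each summand using the contact triad connection $\nabla$ and feed in: the free boundary condition above; the equation $\delbar^\pi w=0$ (which relates $\pi(\del_yw)$ to $J(\del_xw)$ on the boundary); the equation $d(w^*\lambda\circ j)=0$; and the structural identities $T(R_\lambda,\cdot)=0$, $\nabla_YR_\lambda\in\xi$, $\del^\nabla_YR_\lambda=0$ of Theorem~\ref{thm:connection} together with \eqref{eq:nablaYX}. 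After all tangential divergences -- which integrate to zero over the closed arc -- are discarded, what remains is exactly the expression $C(\del D)$ of \eqref{eq;CdelD}, namely $-8\langle B(\del_xw,\del_xw),\del_yw\rangle$ with $B$ the second fundamental form of $R_i$ relative to $\nabla$; this is the statement of Theorem~\ref{eq:W12-with-bdy}. Organizing this tensorial bookkeeping -- so that every term not of the second-fundamental-form type cancels, through a coordinated use of both equations in \eqref{eq:contacton-Legendrian-bdy-intro}, the $d\lambda$-isotropy and $\xi$-tangency of the Legendrian boundary, and the torsion and Reeb properties of the triad connection -- is the main obstacle.

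Finally I would eliminate this boundary term. Its integrand is a contraction of the second fundamental form of $R_i$ (for $\nabla$) against $\del_xw\in TR_i$ and $\del_yw\perp TR_i$, so I would rerun the previous step with $\nabla$ replaced by the Levi-Civita connection $\nabla^{\mathrm{LC}}$ of an auxiliary Riemannian metric on $M$ that agrees with the triad metric along each $R_i$ to first order and makes each $R_i$ totally geodesic; for $\nabla^{\mathrm{LC}}$ the analogous boundary integrand vanishes identically, a totally geodesic submanifold having zero second fundamental form. The tensor $S:=\nabla-\nabla^{\mathrm{LC}}$ is bounded and supported near $\del\dot\Sigma$; substituting $\nabla=\nabla^{\mathrm{LC}}+S$ into \eqref{eq:second-derivative} only adds interior terms bounded by $C\|dw\|^2_{L^2(D_2)}+C\|dw\|^3_{L^3(D_2)}+C\|dw\|^4_{L^4(D_2)}$ (equivalently by a cubic $C\|dw\|^3_{L^2(D_2)}$), the cubic one being absorbed via $ab\le\frac12a^2+\frac12b^2$ and H\"older on $D_2$. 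Collecting these bounds yields \eqref{eq:without-bdyterm}. This elimination step and the absorption are routine once the computation of the previous paragraph is in hand, and they parallel the treatment of pseudoholomorphic curves with Lagrangian boundary in \cite[Sections~8.2--8.3]{oh:book1}.
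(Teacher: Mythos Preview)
Your proposal is correct and follows essentially the same route as the paper: localize the Bochner inequality \eqref{eq:second-derivative} by a cut-off, use Stokes/Green to isolate the boundary contribution $*de|_{\del D}$, show via the free boundary condition and the equation $d(w^*\lambda\circ j)=0$ that this reduces to the second-fundamental-form expression \eqref{eq;CdelD}, and then eliminate it by passing to the Levi-Civita connection of a metric making each $R_i$ totally geodesic (as in \cite[Sections~8.2--8.3]{oh:book1}). The paper's proof differs only in minor bookkeeping---it handles the $d\chi$ term via the estimate \eqref{eq:second-term} rather than by choosing $\chi$ normal-independent near $\del\dot\Sigma$, and it isolates the vanishing of the Reeb-component contribution to $*de|_{\del D}$ as a separate lemma (Lemma~3.5) using $d(w^*\lambda\circ j)=0$ directly rather than framing it as a tangential divergence.
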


Before delving into the tensorial calculation, we outline the strategy of the proof
following the same strategy used for  pseudoholomorphic curves with Lagrangian boundary condition
presented in \cite[Section 8.2 \& 8.3]{oh:book1}:
\begin{enumerate}
\item As the first step, we utilize the contact triad connection $\nabla$ for the study of
boundary value problem to derive the following differential inequality
\be\label{eq:differential-inequality}
\|dw\|^2_{W^{1,2}(D_1)} \leq  C_1 \|dw\|^2_{L^2(D_2)} + C_2 \|dw\|^4_{L^4(D_2)}
+ \int_{\del D}|C(\del D)|
\ee
where we have
\be\label{eq;CdelD}
C(\del D): = - 8\left\langle B\left(\frac{\del w}{\del x},\frac{\del w}{\del x}\right),\frac{\del w}{\del y}\
\right \rangle
\ee
with isothermal coordinate $z = x+ iy$ adapted to $\del \dot \Sigma \cap D_2$.
\item Once we derive \eqref{eq:differential-inequality}, noting that Legendrian boundary
condition $R$ for the contact instanton is automatically the free boundary value problem, i.e.,
$$
\frac{\del w}{\del \nu} \perp TR,
$$
one can use the Levi-Civita connection of a metric for which $R$ becomes
\emph{totally geodesic} (i.e., $B=0$) which will eliminate the boundary contribution
appearing above in \eqref{eq:differential-inequality}. Then recalling the standard fact
that $\nabla = \nabla^{\text{\rm LC}} + P$ for a $(2,1)$ tensor, we can convert the inequality
into \eqref{eq:without-bdyterm}. (See \cite[Section 8.3]{oh:book1} for such detail.)
\end{enumerate}

Therefore we will focus on the derivation of the inequality \eqref{eq:differential-inequality}
in the rest of the section.

\begin{proof} [Proof of Theorem \ref{thm:local-W12}]
The first part of the proof is similar to
the interior estimate of \cite[Proposition 5.3]{oh-wang:CR-map1} given in
\cite[Appendix C]{oh-wang:CR-map1}.

However  to handle the Legendrian boundary condition in the estimates,
we need to carefully analyze how the contact instanton equation interacts with
Legendrian boundary. This turns out to be another place where the full power of
defining properties of contact triad connection is exercised.

We have only to consider the case of a pair of semi-discs $D_1,\, D_2 \subset \dot \Sigma$
with $\overline D_1 \subset D_2$ such that $\del D_2 \subset \del D_1 \subset \del \dot \Sigma$.
(The open disc cases of $D_1 \subset D_2$ are already treated in \cite[Appendix C]{oh-wang:CR-map1}.)

For the pair of given domains $D_1$ and $D_2$, we choose another domain $D$ such that
$\overline D_1 \subset D \subset \overline D \subset D_2$ and a smooth cut-off function $\chi:D_2\to \R$ such that
$\chi\geq 0$ and
$\chi\equiv 1$ on $\overline{D_1}$, $\chi\equiv 0$ on $D_2-D$. Then we have
$$
\int_{D_1}|\nabla(dw)|^2 \leq \int_{D}\chi^2|\nabla(dw)|^2.
$$
Multiplying  \eqref{eq:second-derivative} by $\chi^2$ and integrating over $D$, we
also get
\bea\label{eq:intchi2}
\int_{D}\chi^2|\nabla(dw)|^2
&\leq&C_1\int_{D}\chi^2|dw|^4-4\int_{D}K\chi^2|dw|^2-2\int_{D}\chi^2\Delta e \nonumber\\
&\leq&C_1\int_{D_2}|dw|^4+4\|K\|_{L^\infty(\dot\Sigma)}\int_{D_2}|dw|^2-2\int_{D}\chi^2\Delta e
\eea
where $C_1$ is the same constant as the one appearing in \eqref{eq:second-derivative}.

We now deal with the last term $\int_{D_2}\chi^2 \Delta e$.
We rewrite
\bea\label{eq:chi2Deltae}
\chi^2\Delta e\, dA&=&*(\chi^2 \Delta e)=\chi^2 *\Delta e
=-\chi^2 d*de \nonumber\\
&=&-d(\chi^2 *de)+2\chi d\chi\wedge (*de).
\eea
To deal with the right hand side, we estimate the two terms separately.

For the second term of \eqref{eq:chi2Deltae}, we derive the following inequality.
(See \cite[p.677]{oh-wang:CR-map1} for the details of its derivation.)

\begin{lem}[Equation (3.7), \cite{oh-wang:CR-map1}]
For any $\epsilon > 0$, we have
\be\label{eq:second-term}
\left|\int_{D}\chi d\chi\wedge(*de)\right| \leq
\frac{1}{\epsilon}\int_{D}\chi^2|\nabla(dw)|^2\,dA+\epsilon\|d\chi\|_{C^0(D)}^2\int_{D}|dw|^2\,dA
\ee
\end{lem}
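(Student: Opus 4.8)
The plan is to pass from the one-form identity to a pointwise inequality and then absorb, which is the standard interpolation step; the full computation is recorded in \cite[p.\,677]{oh-wang:CR-map1}. First I would recall that $e = e(w) = \frac12|dw|^2$, where $|\cdot|$ is the norm on $T^*\dot\Sigma\otimes w^*TM$ induced by the domain metric $h$ and the contact triad metric $g$. Since the connection used to form $\nabla(dw)$ is the one induced on $T^*\dot\Sigma\otimes w^*TM$ by the Levi-Civita connection of $h$ together with the pullback of the contact triad connection $\nabla$, and $\nabla g = 0$ by Theorem \ref{thm:connection}(1), one has the identity $de = \langle\nabla(dw),\, dw\rangle$, the pairing contracting the $w^*TM$-factor and one domain cotangent factor. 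By Cauchy--Schwarz this yields the pointwise bound $|{*de}| = |de| \le |\nabla(dw)|\,|dw|$, the Hodge star on the surface $D$ being a pointwise isometry on one-forms.

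Next, since $D$ is two-dimensional, $\chi\,d\chi\wedge(*de)$ is a two-form whose density with respect to $dA$ is bounded by $\chi\,|d\chi|\,|{*de}| \le \chi\,|d\chi|\,|\nabla(dw)|\,|dw|$, so that
\[
\Big|\int_D \chi\,d\chi\wedge(*de)\Big| \;\le\; \int_D \chi\,|d\chi|\,|\nabla(dw)|\,|dw|\,dA .
\]
I would then apply Young's inequality $ab \le \tfrac{1}{2\epsilon}a^2 + \tfrac{\epsilon}{2}b^2$ with $a = \chi\,|\nabla(dw)|$ and $b = |d\chi|\,|dw|$, and bound $|d\chi| \le \|d\chi\|_{C^0(D)}$ in the resulting second term. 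After renaming the free parameter (replacing $\tfrac{\epsilon}{2}$ by $\epsilon$, hence $\tfrac{1}{2\epsilon}$ by $\tfrac{1}{\epsilon}$, which only rescales the admissible range of $\epsilon$), this produces exactly \eqref{eq:second-term}.

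There is no real obstacle here; the estimate is a routine absorption inequality. The only point deserving a line of care is the identity $de = \langle\nabla(dw),dw\rangle$, which relies on the compatibility of the connection used to form $\nabla(dw)$ with the metric defining the energy density $e$ --- precisely property (1) of the contact triad connection in Theorem \ref{thm:connection}, one of several places in this section where that property enters. Note also that $\chi$ is real-valued and compactly supported in $D$, so this term contributes no boundary integral; the boundary contribution to the $W^{2,2}$-estimate instead arises from the first term $-d(\chi^2 *de)$ of \eqref{eq:chi2Deltae}, which is analyzed separately.
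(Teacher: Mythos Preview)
Your proof is correct and is exactly the standard absorption argument that the cited reference \cite[p.\,677]{oh-wang:CR-map1} carries out; the paper itself does not reproduce the computation but simply defers to that reference, so your reconstruction agrees with it. One cosmetic point: in this paper the density is normalized as $e=|dw|^2$ rather than $\tfrac12|dw|^2$, but as you already note the free parameter $\epsilon$ absorbs such constants, so nothing changes.
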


We now estimate the integral of the first term of \eqref{eq:chi2Deltae}
\be\label{eq:first-term}
\int_D -d(\chi^2 * de) = \int_{\del D} - \chi^2 *de
\ee
by Stokes' formula. For this purpose, we need to calculate $*de|_{\del D}$ explicitly.
\emph{This is where the new element, the boundary estimate, starts the
calculation of which did not exist in \cite{oh-wang:CR-map1}. It is also a place where the full
power of the defining properties of the contact triad connection enters.}

Let $z_0 \in \dot \Sigma$ be any boundary point and $z = x+iy$ be
an isothermal coordinate of $(\dot \Sigma, j)$ so that $h = dx^2 + dy^2$
on a semi-disc open neighborhood $D_2 \subset \dot \Sigma$ with $\del D^2 \subset \del \dot \Sigma$
and $\frac{\del}{\del x} \in TR$ on $\del D^2$.

\begin{lem}\label{lem:Neunman-bdy} $w$ satisfies the Neunmann boundary condition,
i.e., $\frac{\del w}{\del y} \perp TR_i$.
\end{lem}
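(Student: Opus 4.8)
The plan is to verify the pointwise orthogonality $\frac{\del w}{\del y}\perp TR_i$ along $\del D^2$ directly from the first equation $\delbar^\pi w = 0$ together with the Legendrian condition; at this stage the contact triad connection plays no role — only the algebraic structure of the triad metric and the compatibility of $J$ with $d\lambda|_\xi$ are needed.

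First I would use the splitting \eqref{eq:du} to write $\frac{\del w}{\del y} = \pi\frac{\del w}{\del y} + \lambda\!\left(\frac{\del w}{\del y}\right)R_\lambda$. Since $R_i$ is Legendrian, $TR_i\subset\xi$, and $\xi\perp R_\lambda$ with respect to the triad metric $g = g_\xi + \lambda\otimes\lambda$; hence the Reeb component of $\frac{\del w}{\del y}$ is automatically $g$-orthogonal to $TR_i$, and it remains only to prove $\pi\frac{\del w}{\del y}\perp_{g_\xi} TR_i$ inside $\xi$. Next, on $\del D^2$ the boundary condition $w(\del D^2)\subset R_i$ gives $\frac{\del w}{\del x} = dw\!\left(\frac{\del}{\del x}\right)\in TR_i\subset\xi$, so $\pi\frac{\del w}{\del x} = \frac{\del w}{\del x}$. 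Because $z = x+iy$ is isothermal we have $j\frac{\del}{\del x} = \frac{\del}{\del y}$, so the $J$-complex linearity of $d^\pi w$ encoded in $\delbar^\pi w = 0$ yields
$$
\pi\frac{\del w}{\del y} = d^\pi w\!\left(\frac{\del}{\del y}\right) = d^\pi w\!\left(j\frac{\del}{\del x}\right) = J\, d^\pi w\!\left(\frac{\del}{\del x}\right) = J\frac{\del w}{\del x}.
$$

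Finally, for any $v\in TR_i$, using that for the triad metric $g_\xi(JY,Y') = d\lambda(Y,Y')$ for $Y,Y'\in\xi$ (a consequence of $g_\xi(Y,Y') = d\lambda(Y,JY')$ together with the $J$-invariance of $d\lambda|_\xi$), we obtain
$$
g_\xi\!\left(\pi\frac{\del w}{\del y},\, v\right) = g_\xi\!\left(J\frac{\del w}{\del x},\, v\right) = d\lambda\!\left(\frac{\del w}{\del x},\, v\right) = 0,
$$
since $\frac{\del w}{\del x}$ and $v$ both lie in the Lagrangian subspace $TR_i$ of $(\xi, d\lambda)$. Thus $\pi\frac{\del w}{\del y}\perp_{g_\xi} TR_i$, which combined with the previous step gives $\frac{\del w}{\del y}\perp_g TR_i$.

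There is no genuine obstacle here; the only thing to watch is the bookkeeping of conventions — which primitive vector field $j$ maps to which, and the precise sign relating $g_\xi$, $J$ and $d\lambda$ — so that the chains of equalities in the two displays close up (and any residual sign ambiguity is harmless, as both $d\lambda(\frac{\del w}{\del x},v)$ and its negative vanish). This lemma is precisely the assertion that the Legendrian boundary condition is a \emph{free} boundary value problem, and it is this fact that will later permit passing to a Levi-Civita connection for which $R_i$ is totally geodesic, thereby annihilating the boundary term $C(\del D)$ of \eqref{eq:differential-inequality}.
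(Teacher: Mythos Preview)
Your proof is correct and follows essentially the same approach as the paper's: both split $\frac{\del w}{\del y}$ into its $\xi$- and Reeb components, dispose of the Reeb component by $R_\lambda \perp TR_i$, and use $\delbar^\pi w = 0$ to identify $\pi\frac{\del w}{\del y}$ with $J\frac{\del w}{\del x}$ (the paper writes this as $-J(\frac{\del w}{\del y})^\pi = \frac{\del w}{\del x} \in TR_i$). The only difference is that you spell out explicitly, via $g_\xi(JY,Y') = d\lambda(Y,Y')$ and the Lagrangian property of $TR_i$ in $(\xi,d\lambda)$, why $J(TR_i) \perp TR_i$, whereas the paper simply asserts ``Therefore $(\frac{\del w}{\del y})^\pi \in NR_i$'' and leaves that step to the reader.
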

\begin{proof} Since $R_i$ are Legendrian, $TR \subset \xi$ and hence
$$
\frac{\del w}{\del x} = \left(\frac{\del w}{\del x}\right)^\pi
$$
along $\del D_2$.
Since $\delbar^\pi w= 0$, we have
$$
- J\left(\frac{\del w}{\del y}\right)^\pi
= \left(\frac{\del w}{\del x}\right)^\pi = \frac{\del w}{\del x} \in TR_i.
$$
Therefore $\left(\frac{\del w}{\del t}\right)^\pi \in NR_i$. Since $R_\lambda \in NR_i$,
this proves
$$
\frac{\del w}{\del y} = \left(\frac{\del w}{\del y}\right)^\pi
+ \lambda\left(\frac{\del w}{\del y} \right) R_\lambda
$$
is contained in $NR_i$. This finishes the proof.
\end{proof}

\begin{rem}
We recall that contact triad connection preserves the metric but may have
nonzero torsion, i.e., is not the Levi-Civita connection of the triad metric.
The definition of the second fundamental form of a submanifold $S \subset (M,g)$ for
such a Riemannian connection is still a bilinear map
$
B: TS \times TS \to NS
$
defined by the symmetric average
\be\label{eq:B}
B(X_1,X_2) = \frac12((\nabla_{X_1} X_2)^\perp + (\nabla_{X_2}X_1)^\perp).
\ee
\end{rem}

Since $\xi \perp R_\lambda$ with respect to the triad metric and $\frac{\del w}{\del x} \in TR \subset \xi$ on $\del \dot \Sigma$,
$$
\frac{\del w}{\del x} = \left(\frac{\del w}{\del x}\right)^\pi = - J \left(\frac{\del w}{\del y}\right)^\pi,
$$
and
$$
\frac{\del w}{\del y} = \left(\frac{\del w}{\del y}\right)^\pi + \lambda\left(\frac{\del w}{\del y}\right)
R_\lambda.
$$
Therefore we can express the harmonic energy density function as
$$
e := \left|\frac{\del w}{\del x}\right|^2 + \left|\frac{\del w}{\del y}\right|^2
= 2 \left|\frac{\del w}{\del x}\right|^2 + \left|\lambda\left(\frac{\del w}{\del y}\right)\right|^2.
$$
We then compute
\be\label{eq:*de}
*de|_{\del D} = -\frac{de}{dt} = -4 \left\langle \nabla_t \frac{\del w}{\del x},\frac{\del w}{\del x} \right\rangle
- 2 \frac{\del}{\del y}\left(\lambda\left(\frac{\del w}{\del y}\right)\right)
\cdot \lambda\left(\frac{\del w}{\del y}\right).
\ee
Since $w$ satisfies Neunmann boundary condition, the first term becomes
\be\label{eq:-B}
- 4\left \langle B\left(\frac{\del w}{\del x},\frac{\del w}{\del x}\right),\frac{\del w}{\del y}\right \rangle
\ee
with the second fundamental form \eqref{eq:B} by definition.

For the second term of \eqref{eq:*de}, we prove the following vanishing.
\begin{lem} We have
\be\label{eq:dtdtdw}
 \frac{\del}{\del y}\left(\lambda\left(\frac{\del w}{\del y}\right)\right) = 0
 \ee
 on $\del D_2$.
 In particular, the second term of \eqref{eq:*de} vanishes on $\del D_2$.
\end{lem}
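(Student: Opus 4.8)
The plan is to extract \eqref{eq:dtdtdw} directly from the \emph{second} contact instanton equation $d(w^*\lambda\circ j)=0$, used together with the Legendrian boundary condition; no Weitzenb\"ock inequality is needed here. I would work in the isothermal coordinate $z=x+iy$ already fixed above, in which $h=dx^2+dy^2$, the boundary arc $\del D_2\subset\del\dot\Sigma$ is $\{y=0\}$, and $\frac{\del}{\del x}$ is tangent to it. Set
$$
a:=\lambda\left(\frac{\del w}{\del x}\right),\qquad b:=\lambda\left(\frac{\del w}{\del y}\right),
$$
so that $w^*\lambda=a\,dx+b\,dy$. Since $j$ acts as a $90^\circ$ rotation in an isothermal coordinate, one computes $w^*\lambda\circ j=\pm(b\,dx-a\,dy)$, and in either orientation $d(w^*\lambda\circ j)=\mp\left(\frac{\del a}{\del x}+\frac{\del b}{\del y}\right)dx\wedge dy$. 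Hence the equation $d(w^*\lambda\circ j)=0$ is equivalent, on $D_2$, to the pointwise identity $\frac{\del a}{\del x}+\frac{\del b}{\del y}=0$.

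Next I would restrict to the boundary $\del D_2$. Because $w(\del D_2)\subset R_i$ and $\frac{\del}{\del x}$ is tangent to $\del D_2$, the vector $\frac{\del w}{\del x}$ lies in $TR_i$ there; as $R_i$ is Legendrian, $TR_i\subset\xi=\ker\lambda$, so $a=\lambda\left(\frac{\del w}{\del x}\right)\equiv 0$ along $\del D_2$. Differentiating this identity in the direction tangent to the boundary (i.e.\ in the $x$-direction along $\{y=0\}$) gives $\frac{\del a}{\del x}=0$ on $\del D_2$. Substituting into $\frac{\del a}{\del x}+\frac{\del b}{\del y}=0$ yields $\frac{\del b}{\del y}=0$ on $\del D_2$, which is exactly \eqref{eq:dtdtdw}. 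Since the second term of \eqref{eq:*de} equals $-2\,\frac{\del}{\del y}\!\left(\lambda\!\left(\frac{\del w}{\del y}\right)\right)\cdot\lambda\!\left(\frac{\del w}{\del y}\right)$, it then vanishes identically on $\del D_2$.

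The computation is genuinely short, so there is no real analytic obstacle. The only points that require care are the orientation/sign bookkeeping when rewriting $d(w^*\lambda\circ j)=0$ in the chosen coordinate (which, as noted, is inconsequential for the conclusion), and the recognition that it is the equation $d(w^*\lambda\circ j)=0$ — and not $\delbar^\pi w=0$ — that does the work, combined with the fact that the Legendrian condition forces $\lambda\!\left(\frac{\del w}{\del x}\right)$ to vanish, hence to vanish to first order tangentially, along $\del\dot\Sigma$. A minor point to check is that, as in Lemma~\ref{lem:Neunman-bdy}, one is using the isothermal coordinate already normalized so that $\del D_2\subset\del\dot\Sigma$ is $\{y=0\}$ with $\del_x$ tangent to it and $\del_x w\in TR_i$.
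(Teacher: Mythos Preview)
Your proof is correct and follows essentially the same approach as the paper: both use $d(w^*\lambda\circ j)=0$ to obtain $\frac{\del a}{\del x}+\frac{\del b}{\del y}=0$ and then invoke the Legendrian boundary condition to conclude that $a=\lambda(\del w/\del x)\equiv 0$ (hence $\del a/\del x=0$) along $\del D_2$. Your write-up is slightly more explicit about the sign bookkeeping for $w^*\lambda\circ j$ and about the tangential differentiation step, but the argument is the same.
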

\begin{proof} \emph{By the closedness $d(w^*\lambda \circ j) = 0$}, we obtain
$$
 \frac{\del}{\del y}\left(\lambda\left(\frac{\del w}{\del y}\right)\right)
 +  \frac{\del}{\del x}\left(\lambda\left(\frac{\del w}{\del x}\right)\right) = 0
 $$
  by evaluating $0 = d(w^*\lambda \circ j)(\del_x,\del_y)$.
 Therefore we have
 $$
  \frac{\del}{\del y}\left(\lambda\left(\frac{\del w}{\del y}\right)\right)
 = - \frac{\del}{\del x}\left(\lambda\left(\frac{\del w}{\del x}\right)\right).
 $$
 Since $\frac{\del w}{\del x}$ is tangent to the Legendrian
 $R$ on $\del \dot \Sigma$, the latter vanishes along $\del \dot \Sigma$.
 This finishes the proof.
 \end{proof}

We summarize the above calculation
concerning $*de|_{\del D}$ into the following proposition.

\begin{prop}\label{prop:*de|delD} We have
\be\label{eq:*de-on-bdy}
* de = - 4 \left \langle B\left(\frac{\del w}{\del x},\frac{\del w}{\del x}\right),
\frac{\del w}{\del y}\right \rangle
\ee
on $\del D$.
\end{prop}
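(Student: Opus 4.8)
The plan is to assemble the formula from the pointwise computation of $*de$ along $\del D$ together with the two lemmas just established. First I would record that, in the isothermal coordinate $z = x+iy$ adapted to $\del\dot\Sigma$ (so that $\frac{\del}{\del x}$ is tangent to $\del D$ and to $R_i$), the restriction of the $1$-form $*de$ to $\del D$ equals $-\frac{de}{dt}$ with $t = x$. Using the boundary decomposition
$$
e = \left|\frac{\del w}{\del x}\right|^2 + \left|\frac{\del w}{\del y}\right|^2 = 2\left|\frac{\del w}{\del x}\right|^2 + \left|\lambda\left(\frac{\del w}{\del y}\right)\right|^2,
$$
which follows from $TR\subset\xi$, the CR relation $\frac{\del w}{\del x} = -J\left(\frac{\del w}{\del y}\right)^\pi$ coming from $\delbar^\pi w = 0$, and the orthogonality $\xi\perp R_\lambda$ for the triad metric, and differentiating in $t$ while using that $\nabla$ is metric ($\nabla g = 0$), I obtain exactly \eqref{eq:*de}:
$$
*de|_{\del D} = -4\left\langle \nabla_t\frac{\del w}{\del x},\frac{\del w}{\del x}\right\rangle - 2\,\frac{\del}{\del y}\left(\lambda\left(\frac{\del w}{\del y}\right)\right)\cdot\lambda\left(\frac{\del w}{\del y}\right).
$$

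Second I would treat the first term. Along $\del D$ the vector field $\frac{\del w}{\del x}$ is tangent to $R_i$, so $\nabla_t\frac{\del w}{\del x} = \nabla_{\frac{\del w}{\del x}}\frac{\del w}{\del x}$ is the covariant derivative of a section of $TR_i$ in a direction tangent to $R_i$ (this uses only that a pulled-back covariant derivative along the boundary curve depends on the map restricted to that curve). Pairing with $\frac{\del w}{\del y}$, which lies in $NR_i$ by Lemma \ref{lem:Neunman-bdy}, kills the tangential part and leaves the normal projection; since the two slots are filled by the same vector, the symmetric average in the definition \eqref{eq:B} of the second fundamental form $B$ of the triad connection produces precisely $B\left(\frac{\del w}{\del x},\frac{\del w}{\del x}\right)$. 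Hence the first term equals $-4\left\langle B\left(\frac{\del w}{\del x},\frac{\del w}{\del x}\right),\frac{\del w}{\del y}\right\rangle$, which is \eqref{eq:-B}.

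Third, the second term vanishes identically on $\del D_2$ by Lemma \eqref{eq:dtdtdw}: evaluating $0 = d(w^*\lambda\circ j)(\del_x,\del_y)$ gives $\frac{\del}{\del y}\left(\lambda\left(\frac{\del w}{\del y}\right)\right) = -\frac{\del}{\del x}\left(\lambda\left(\frac{\del w}{\del x}\right)\right)$, and the right-hand side vanishes along $\del\dot\Sigma$ because $\frac{\del w}{\del x}\in TR_i\subset\xi = \ker\lambda$ there. Combining the three steps yields \eqref{eq:*de-on-bdy}. The only point requiring care --- and it is handled by the defining properties of the contact triad connection rather than being a genuine obstacle --- is that $\nabla_t\frac{\del w}{\del x}$ must be correctly identified with an intrinsic covariant derivative on $R_i$ so that its normal component is genuinely $B$; this is where the metric property $\nabla g = 0$ (to pass from $\frac{\del}{\del t}\left|\frac{\del w}{\del x}\right|^2$ to the pairing with $\nabla_t\frac{\del w}{\del x}$) and the symmetric-average definition \eqref{eq:B} of $B$ (needed because $\nabla$ carries torsion) both enter.
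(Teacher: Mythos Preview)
Your overall strategy matches the paper's: write $*de|_{\del D}$ via \eqref{eq:*de}, identify the first term with the second fundamental form expression \eqref{eq:-B}, and kill the second term by the lemma \eqref{eq:dtdtdw}. However, your execution of the first two steps contains a genuine confusion.

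The direction ``$t$'' in \eqref{eq:*de} is the \emph{normal} direction $y$, not the tangential $x$ as you write. Indeed, with $*dx=dy$ and $*dy=-dx$ one has $*de = e_x\,dy - e_y\,dx$, so restricting to $\del D$ (where $\frac{\del}{\del x}$ is tangent) gives $*de|_{\del D} = -\frac{\del e}{\del y}\,dx$. Differentiating the boundary expression for $e$ in $y$ is what produces the $\frac{\del}{\del y}$ in the second term of \eqref{eq:*de}; differentiating in $x$ would not.

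Consequently the first term of \eqref{eq:*de} is $-4\langle \nabla_y \frac{\del w}{\del x}, \frac{\del w}{\del x}\rangle$, paired with $\frac{\del w}{\del x}$, \emph{not} with $\frac{\del w}{\del y}$ as you claim. Your argument ``pairing with $\frac{\del w}{\del y}\in NR_i$ kills the tangential part'' therefore does not apply as stated. To reach \eqref{eq:-B} one must instead swap the covariant derivatives (using $[\del_x,\del_y]=0$; the torsion term $\langle T(\frac{\del w}{\del x},\frac{\del w}{\del y}),\frac{\del w}{\del x}\rangle$ vanishes on $\del D$ by properties (2), (5) of Theorem~\ref{thm:connection} combined with $(\frac{\del w}{\del y})^\pi = J\frac{\del w}{\del x}$), then differentiate the boundary orthogonality $\langle \frac{\del w}{\del x},\frac{\del w}{\del y}\rangle=0$ along $x$ to convert $\langle \nabla_x\frac{\del w}{\del y},\frac{\del w}{\del x}\rangle$ into $-\langle \frac{\del w}{\del y},\nabla_x\frac{\del w}{\del x}\rangle = -\langle \frac{\del w}{\del y}, B(\frac{\del w}{\del x},\frac{\del w}{\del x})\rangle$. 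Your treatment of the second term via \eqref{eq:dtdtdw} is correct.
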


Then, substituting \eqref{eq:second-term} and \eqref{eq:*de-on-bdy} into the integral of \eqref{eq:chi2Deltae},
we derive
$$
\left|2\int_{D}\chi^2\Delta e\right|  \leq
\int_D\frac{2\chi^2}{\epsilon}|\nabla(dw)|^2
+ 2\epsilon \|d\chi\|_{C^0(D)}\int_{D_2}|dw|^2 + \int_{\del D} |C(\del D)|
$$
where
$$
C(\del D): = 8
\left \langle B\left(\frac{\del w}{\del x},
 \frac{\del w}{\del x}\right),\frac{\del w}{\del y}\right \rangle
$$
for any positive constant $\epsilon > 0$.

Substituting this into \eqref{eq:intchi2} followed by rearranging the summands, we derive
\beastar
\int_{D}\chi^2|\nabla(dw)|^2
&\leq& \int_D\frac{2\chi^2}{\epsilon}|\nabla(dw)|^2\\
&{}&+\left(4\|K\|_{L^\infty(\dot\Sigma)}+2\epsilon \|d\chi\|_{C^0(D)}\right)\int_{D_2}|dw|^2\\
&{}&+C_1\int_{D_2}|dw|^4 + \int_{\del D} | C(\del D)|
\eeastar
which is equivalent to
\beastar
\int_D \left(1 - \frac{2}{\epsilon}\right) \chi^2 |\nabla(dw)|^2
&\leq &\left(4\|K\|_{L^\infty(\dot\Sigma)}+2\epsilon \|d\chi\|_{C^0(D)}\right)\int_{D_2}|dw|^2\\
&{}&+C_1\int_{D_2}|dw|^4 + \int_{\del D}|C(\del D)|.
\eeastar
We now take $\epsilon=4$. Then we obtain
\beastar
&{}&\int_{D}\chi^2|\nabla(dw)|^2\nonumber\\
&\leq&\left(8\|K\|_{L^\infty(\dot\Sigma)}+16\|d\chi\|^2_{C^0(D)}\right)\int_{D_2}|dw|^2
+2C_1\int_{D_2}|dw|^4 + 2 \int_{\del D} |C(\del D)|.
\eeastar
We recall $D_1 \subset D \subset D_2$, and that $\int_{D_1}|\nabla(dw)|^2 \leq \int_{D}\chi^2|\nabla(dw)|^2$
by the defining properties of $\chi$. Therefore by
letting $D \to D_2$ and setting
\beastar
C_1' & = & C_1'(D_1,D_2) =  8\|K\|_{L^\infty(\dot\Sigma)}+16\|d\chi\|^2_{C^0(D)}\\
C_2' & = & C_2'(D_1,D_2) = 2C_1
\eeastar
 we have obtained
$$
\int_{D_1}|\nabla(dw)|^2 \leq C_1' \int_{D_2} |dw|^2 + C_2' \int_{D_2} |dw|^4
+ 2\int_{\del D} |C(\del D)|.
$$
By removing the primes and adjusting the constants,  we have finally finished
the proof of \eqref{eq:differential-inequality} and
hence Theorem \ref{thm:local-W12}.
\end{proof}

\section{$C^{k,\delta}$ coercive estimates for $k \geq 1$}
\label{sec:Ckdelta-estimates}

Once we have established $W^{2,2}$ estimate, we could proceed with the $W^{k+2,2}$ estimate $k \geq 1$
inductively as in \cite[Section 5.2]{oh-wang:CR-map1}.
The effect of the Legendrian boundary condition on
the higher derivative estimate is not quite straightforward, although it should be doable.
Instead we take an easier path of using the H\"older estimates instead of
the Sobolev estimates by expressing the following fundamental equation in
the isothermal coordinates of $(\dot \Sigma,j)$. The current approach also simplifies the higher derivative
estimate given in \cite[Section 5.2]{oh-wang:CR-map1}
which do the $W^{k+2,2}$-estimates instead.

\begin{thm}[Theorem 4.2, \cite{oh-wang:connection}]
Let $w$ satisfy $\delbar^\pi w=0$. Then
\be\label{eq:fundamental}
d^{\nabla^\pi}(d^\pi w) = -w^*\lambda\circ j \wedge\left( \frac{1}{2}(\CL_{R_\lambda}J)\, d^\pi w\right).
\ee
\end{thm}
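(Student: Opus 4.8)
The plan is to derive \eqref{eq:fundamental} from the tautological identity for the exact $w^*TM$-valued one-form $dw$: its exterior covariant derivative with respect to the pulled-back contact triad connection equals the pullback of the torsion,
$$
(d^{\nabla}dw)(X,Y) = T\big(dw(X),dw(Y)\big),
$$
which is a standard local-coordinate computation (the second-order terms cancel by symmetry of the partials, leaving exactly the antisymmetric part of the Christoffel symbols). Using the decomposition $dw = d^\pi w + w^*\lambda\otimes R_\lambda$ of \eqref{eq:du} and linearity of $d^\nabla$ gives $d^\nabla(d^\pi w) = w^*T - d^\nabla(w^*\lambda\otimes R_\lambda)$. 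Since $\nabla^\pi = \Pi\nabla|_\xi$ and the bracket term $d^\pi w([X,Y])$ already lies in $\xi$, we have $d^{\nabla^\pi}(d^\pi w) = \Pi\big(d^\nabla(d^\pi w)\big)$, so it remains to apply $\Pi$ to the two terms on the right.

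For the torsion term, substituting $dw(X) = d^\pi w(X) + \lambda(\del_X w)R_\lambda$ and expanding bilinearly, every summand containing $R_\lambda$ drops out by $T(R_\lambda,\cdot)=0$ (Theorem~\ref{thm:connection}(2)), so $T(dw(X),dw(Y)) = T(d^\pi w(X),d^\pi w(Y))$. On the two-dimensional $\dot\Sigma$ it suffices to evaluate on a conformal coordinate frame $(\del_x,\del_y)$ with $j\del_x = \del_y$; since $\delbar^\pi w = 0$, we have $d^\pi w(\del_y) = J\,d^\pi w(\del_x)$, whence, by antisymmetry of the torsion and \eqref{eq:TJYYxi} (Theorem~\ref{thm:connection}(5)),
$$
\Pi\,T\big(d^\pi w(\del_x),d^\pi w(\del_y)\big) = T^\pi\big(d^\pi w(\del_x),\,J\,d^\pi w(\del_x)\big) = 0 .
$$
So the torsion term contributes nothing. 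I expect this cancellation — which uses both torsion normalizations of the contact triad connection simultaneously, together with the CR equation $\delbar^\pi w = 0$ — to be the heart of the argument; everything else is bookkeeping.

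It remains to compute $-\Pi\,d^\nabla(w^*\lambda\otimes R_\lambda)$. By the graded Leibniz rule, $d^\nabla(w^*\lambda\otimes R_\lambda) = d(w^*\lambda)\otimes R_\lambda - w^*\lambda\wedge (w^*\nabla)R_\lambda$, and the first summand is annihilated by $\Pi$ because $\Pi R_\lambda = 0$. For the second, Corollary~\ref{cor:connection}(1) gives $\nabla_{\del_X w}R_\lambda = \frac12(\CL_{R_\lambda}J)\,J(\del_X w) = \frac12(\CL_{R_\lambda}J)\,J\,d^\pi w(X)$, using $JR_\lambda = 0$; moreover $\CL_{R_\lambda}J$ preserves $\xi$ (replace $Y$ by $-JY$ in Corollary~\ref{cor:connection}(1) and invoke $\nabla_Y R_\lambda\in\xi$, Theorem~\ref{thm:connection}(3)), so $\Pi$ acts as the identity there. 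Combining with the vanishing of the torsion contribution,
$$
d^{\nabla^\pi}(d^\pi w) = w^*\lambda\wedge\Pi\big((w^*\nabla)R_\lambda\big) = w^*\lambda\wedge\Big(\frac12(\CL_{R_\lambda}J)\,J\,d^\pi w\Big).
$$

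Finally, using $\delbar^\pi w = 0$ once more in the form $J\,d^\pi w = d^\pi w\circ j$, one has $(\CL_{R_\lambda}J)\,J\,d^\pi w = \big((\CL_{R_\lambda}J)\,d^\pi w\big)\circ j$, and the elementary identity $\eta\wedge(\zeta\circ j) = -(\eta\circ j)\wedge\zeta$ for one-forms in dimension two turns the previous display into
$$
d^{\nabla^\pi}(d^\pi w) = -\,(w^*\lambda\circ j)\wedge\Big(\frac12(\CL_{R_\lambda}J)\,d^\pi w\Big),
$$
which is precisely \eqref{eq:fundamental}. Beyond the torsion cancellation, the only care needed is in tracking the signs in the Leibniz rule and in this last $j$-manipulation.
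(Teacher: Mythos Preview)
Your argument is correct. The paper does not supply its own proof of this statement---it is quoted as Theorem 4.2 of \cite{oh-wang:connection}---so there is no in-paper proof to compare against; your derivation via $d^\nabla dw = w^*T$, the torsion normalizations $T(R_\lambda,\cdot)=0$ and $T^\pi(JY,Y)=0$, and the Leibniz expansion of $d^\nabla(w^*\lambda\otimes R_\lambda)$ is exactly the standard route and all steps (including the identity $\eta\wedge(\zeta\circ j) = -(\eta\circ j)\wedge\zeta$ on a Riemann surface) are justified.
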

(For readers' convenience, we recall the definition of the covariant exterior differential
$d^{\nabla^\pi}$ and other relevant calculus for vector valued forms in general
in Appendix \ref{append:weitzenbock}.)

In an isothermal coordinates $z = x+iy$, with the evaluation of $(\frac{\del}{\del x},\frac{\del}{\del y})$
into \eqref{eq:fundamental}, the equation becomes
$$
\nabla_x^\pi \zeta + J \nabla_y^\pi \zeta
+ \frac{1}{2} \lambda\left(\frac{\del w}{\del y}\right)(\CL_{R_\lambda}J)\zeta - \frac{1}{2}
\lambda\left(\frac{\del w}{\del x}\right)(\CL_{R_\lambda}J)J\zeta =0.
$$
(See \cite[Corollary 4.3]{oh-wang:connection}.) By writing
$$
\overline \nabla^\pi := \nabla^{\pi(0,1)} = \frac{\nabla^\pi + J \nabla^\pi_{j(\cdot)}}{2}
$$
which is the anti-complex linear part of $\nabla^\pi$, and the linear operator
$$
P_{w^*\lambda}(\zeta): = \frac{1}{4} \lambda\left(\frac{\del w}{\del y}\right)(\CL_{R_\lambda}J)\zeta - \frac{1}{4}
\lambda\left(\frac{\del w}{\del x}\right)(\CL_{R_\lambda}J)J\zeta,
$$
the equation becomes
\be\label{eq:nablabar-P}
\overline \nabla^\pi \zeta + P_{w^*\lambda}(\zeta) = 0
\ee
which is a linear first-order PDE of Cauchy-Riemann type. We note that
by the Sobolev embedding, $ W^{2,2} \subset C^{0,\delta}$ for $0 \leq \delta < 1/2$.
Therefore we start from $C^{0,\delta}$ bound with $0 < \delta <1/2$ and will inductively bootstrap it to
$C^{k, \delta}$ bounds for $k \geq 1$

WLOG, we assume that $D_2 \subset \dot \Sigma$ is a semi-disc with $\del D \subset \del \dot \Sigma$
and equipped with an isothermal coordinates $(x,y)$ such that
$$
D_2 = \{ (x,y) \mid |x|^2 + |y|^2 < \epsilon, \, y \geq 0\}
$$
for some $\epsilon > 0$
and so $\del D_2 \subset \{(x,y) \in D \mid y = 0\}$. Assume $D_1 \subset D_2$
is the semi-disc with radius $\epsilon /2$.
We denote $\zeta = \pi \frac{\del w}{\del x}$, $\eta = \pi \frac{\del w}{\del y}$
as in \cite{oh-wang:CR-map1}, and consider the complex-valued function
\be\label{eq:alpha}
\alpha(x,y) = \lambda\left(\frac{\del w}{\del y}\right)
+ \sqrt{-1}\left(\lambda\left(\frac{\del w}{\del x}\right)\right)
\ee
as in \cite[Subsection 11.5]{oh-wang:CR-map2}. We note that since $w$ satisfies the
Legendrian boundary condition, we have
\be\label{eq:lambda(delw)=0}
\lambda\left(\frac{\del w}{\del x}\right) = 0
\ee
on $\del D_2$.

\begin{lem}[Lemma 11.19 \cite{oh-wang:CR-map2}]\label{lem:*dw*lambda}
 Let $\zeta = \pi \frac{\del w}{\del x}$. Then
$$
*d(w^*\lambda) =|\zeta|^2.
$$
\end{lem}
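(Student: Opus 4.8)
The plan is to compute $d(w^*\lambda)$ directly in the isothermal coordinates $z = x+iy$ and then identify the result with $|\zeta|^2\, dx\wedge dy$, using only the contact instanton equation $\delbar^\pi w = 0$ and the compatibility of $J$ with $d\lambda$. First I would write $w^*\lambda = \lambda(\partial_x w)\, dx + \lambda(\partial_y w)\, dy$, so that
\[
d(w^*\lambda) = \left(\frac{\partial}{\partial x}\lambda\Big(\frac{\partial w}{\partial y}\Big) - \frac{\partial}{\partial y}\lambda\Big(\frac{\partial w}{\partial x}\Big)\right) dx\wedge dy.
\]
The standard way to handle this is to pull back the intrinsic formula $d\lambda(X,Y) = X\lambda(Y) - Y\lambda(X) - \lambda([X,Y])$ along $w$; since $[\partial_x,\partial_y]=0$ as coordinate vector fields and $w_*$ is a bracket-preserving pushforward, we get $d(w^*\lambda)(\partial_x,\partial_y) = d\lambda(\partial_x w, \partial_y w)$. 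Hence $*d(w^*\lambda) = d\lambda\big(\frac{\partial w}{\partial x},\frac{\partial w}{\partial y}\big)$ once the Hodge star with respect to $h = dx^2+dy^2$ is applied (the sign/normalization being fixed by $*(dx\wedge dy)=1$).

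Next I would split each $\frac{\partial w}{\partial x}$ and $\frac{\partial w}{\partial y}$ into its $\xi$-part and its $R_\lambda$-component using the decomposition \eqref{eq:du}, namely $\frac{\partial w}{\partial x} = \zeta + \lambda(\frac{\partial w}{\partial x})R_\lambda$ and $\frac{\partial w}{\partial y} = \eta + \lambda(\frac{\partial w}{\partial y})R_\lambda$ with $\zeta = \pi\frac{\partial w}{\partial x}$, $\eta = \pi\frac{\partial w}{\partial y}$. Since $R_\lambda \rfloor d\lambda = 0$, the $R_\lambda$-components drop out entirely and we are left with $*d(w^*\lambda) = d\lambda(\zeta,\eta)$. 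Now the contact instanton equation $\delbar^\pi w = 0$ says precisely that the $(0,1)$-part of $d^\pi w$ vanishes, which in coordinates reads $\eta = J\zeta$ (equivalently $\zeta = -J\eta$, exactly as used in the proof of Lemma \ref{lem:Neunman-bdy}). Substituting,
\[
*d(w^*\lambda) = d\lambda(\zeta, J\zeta) = |\zeta|^2,
\]
where the last equality is the defining property of the contact triad metric $g_\xi = d\lambda(\cdot, J\cdot)|_\xi$, i.e.\ $|\zeta|^2 = g_\xi(\zeta,\zeta) = d\lambda(\zeta,J\zeta)$ (this uses the $\lambda$-adaptedness of $J$, $d\lambda(Y,JY)\geq 0$, which makes $g_\xi$ positive and identifies it with the norm $|\cdot|$ appearing in the statement).

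I do not expect a genuine obstacle here: the whole statement is a one-line tensorial identity once the pullback-of-$d\lambda$ step and the substitution $\eta = J\zeta$ are in place. The only point demanding a little care is bookkeeping of signs and the orientation convention for $*$ on $(\dot\Sigma, j)$, so that $*d(w^*\lambda)$ comes out as $+|\zeta|^2$ rather than $-|\zeta|^2$; this is pinned down by the convention $j\partial_x = \partial_y$ and $*(dx\wedge dy) = 1$ in the chosen isothermal coordinates, consistent with the conventions already fixed in \cite{oh-wang:CR-map2}. A secondary remark is that this computation is entirely interior — it does not use the Legendrian boundary condition — so the lemma holds verbatim for any contact instanton, and I would state it that way.
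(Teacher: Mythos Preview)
Your proof is correct and is the natural coordinate computation: pull back $d\lambda$ along $w$, drop the Reeb components via $R_\lambda\rfloor d\lambda = 0$, use $\eta = J\zeta$ from $\delbar^\pi w = 0$, and finish with $d\lambda(\zeta,J\zeta)=|\zeta|^2$. The paper itself does not reprove this lemma here --- it simply quotes it as Lemma 11.19 of \cite{oh-wang:CR-map2} --- and your argument is exactly the expected one (and is consistent with the equivalent form \eqref{eq:dw*lambda}, since $|d^\pi w|^2 = |\zeta|^2 + |J\zeta|^2 = 2|\zeta|^2$).
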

Combining Lemma \ref{lem:*dw*lambda} together with the equation $d(w^*\lambda\circ j)=0$, we
notice that $\alpha$ satisfies the equations
\be\label{eq:atatau-equation}
\begin{cases}
\delbar \alpha =\nu, \quad \nu =\frac{1}{2}|\zeta|^2 \\
\alpha(z) \in \R \quad z  \quad \text{\rm for } \, \in \del D_2
\end{cases}
\ee
thanks to \eqref{eq:lambda(delw)=0},
where $\delbar=\frac{1}{2}\left(\frac{\del}{\del x}+\sqrt{-1}\frac{\del}{\del y}\right)$
is the standard Cauchy-Riemann operator for the standard complex structure $J_0=\sqrt{-1}$.

Then we arrive at the following system of equations for the pair $(\zeta,\alpha)$
\be\label{eq:equation-for-zeta0}
\begin{cases}\nabla_x^\pi \zeta + J \nabla_y^\pi \zeta
+ \frac{1}{2} \lambda(\frac{\del w}{\del y})(\CL_{R_\lambda}J)\zeta - \frac{1}{2} \lambda(\frac{\del w}{\del x})(\CL_{R_\lambda}J)J\zeta =0\\
\zeta(z) \in TR_i \quad \text{for } \, z \in \del D_2
\end{cases}
\ee
for some $i = 0, \ldots, k$, and
\be\label{eq:equation-for-alpha}
\begin{cases}
\delbar \alpha = \frac{1}{2}|\zeta|^2 \\
\alpha(z) \in \R \quad \text{for } \, z \in \del D_2.
\end{cases}
\ee
These two equations form a nonlinear elliptic system for $(\zeta,\alpha)$ which are coupled:
$\alpha$ is fed into
\eqref{eq:equation-for-zeta0} through its coefficients and then $\zeta$ provides the input
for the equation \eqref{eq:equation-for-alpha} and then back and forth. Using this structure of
coupling, we are now ready to derive the higher derivative estimates
by alternating boot strap arguments between $\zeta$ and $\alpha$
which is now in order.

\begin{thm}\label{thm:local-regularity}
Let $w$ be a contact instanton satisfying \eqref{eq:contacton-Legendrian-bdy-intro}.
Then for any pair of domains $D_1 \subset D_2 \subset \dot \Sigma$ such that $\overline{D_1}\subset D_2$, we have
$$
\|dw\|_{C^{k,\delta}(D_1)} \leq C_\delta( \|dw\|_{W^{1,2}(D_2)})
$$
for some  positive function $C_\delta = C_\delta(r)$ that
is continuous at $r = 0$ which depends on $J$, $\lambda$ and $D_1, \, D_2$
but independent of $w$.
\end{thm}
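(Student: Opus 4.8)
\noindent\emph{Sketch of the argument.}\ The plan is to run an alternating elliptic bootstrap on the coupled first-order system \eqref{eq:equation-for-zeta0}--\eqref{eq:equation-for-alpha} for the pair $(\zeta,\alpha)$, raising H\"older regularity by one order at each stage, along a nested chain of domains $D_1 = D^{(k)} \subset D^{(k-1)} \subset \cdots \subset D^{(0)} \subset D_2$. For interior points the interior estimates of \cite{oh-wang:CR-map1} apply directly, so I would work near a boundary point in the semi-disc model, with isothermal coordinates straightening $\del\dot\Sigma$ to $\{y=0\}$. The starting datum is the hypothesis itself: $dw \in W^{1,2}(D_2)$, i.e.\ $w \in W^{2,2}(D_2)$, whence the Sobolev embedding $W^{1,2}(\R^2) \hookrightarrow L^q$ for all $q < \infty$ gives $dw \in L^q_{\mathrm{loc}}$ for every finite $q$, and $w \in W^{2,2} \hookrightarrow C^{0,\delta}$ for $0 < \delta < 1/2$. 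Since $\delbar^\pi w = 0$ forces $\pi\frac{\del w}{\del y} = J\zeta$, and since $\frac{\del w}{\del x} = \zeta + (\operatorname{Im}\alpha)R_\lambda$ and $\frac{\del w}{\del y} = J\zeta + (\operatorname{Re}\alpha)R_\lambda$, the differential $dw$ is reconstructed pointwise from $w$, $\zeta$ and $\alpha$; hence it suffices to bootstrap $\zeta$ and $\alpha$, which to begin with lie in $L^q_{\mathrm{loc}}$ for all $q$.

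First I would upgrade $\alpha$: in \eqref{eq:equation-for-alpha}, $\delbar\alpha = \tfrac12|\zeta|^2 \in L^q_{\mathrm{loc}}$ for all $q<\infty$ and $\alpha$ is real on $\del D_2$, so the standard elliptic estimate for the Cauchy--Riemann operator with this linear (totally real) boundary condition gives $\alpha \in W^{1,q}_{\mathrm{loc}}$ for all $q$, hence $\alpha \in C^{0,\delta}$. Then I would feed this back into \eqref{eq:equation-for-zeta0}, written in the form \eqref{eq:nablabar-P} as $\overline\nabla^\pi\zeta + P_{w^*\lambda}(\zeta) = 0$: expanding $\overline\nabla^\pi = \delbar + (\text{zeroth-order connection term})$ in the isothermal coordinates turns this into a linear Cauchy--Riemann equation $\delbar\zeta = -\bigl(\Gamma^\pi(w)\cdot dw + P_{w^*\lambda}\bigr)\zeta$ whose coefficients lie in $L^q_{\mathrm{loc}}$ for all $q$ ($\Gamma^\pi(w) \in C^{0,\delta}$, $dw \in L^q$, and the coefficients of $P_{w^*\lambda}$ are $\operatorname{Re}\alpha,\operatorname{Im}\alpha \in C^{0,\delta}$), so its right-hand side is in $L^p_{\mathrm{loc}}$ for all $p$. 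Because $R_i$ is Legendrian, $T R_i \subset \xi$ is Lagrangian for $d\lambda$, hence totally real for $J$, so $\zeta(z) \in T_{w(z)}R_i$ is a (moving) totally real boundary condition; the interior-plus-boundary estimates for $\delbar$-type operators with totally real boundary conditions, as used for pseudoholomorphic curves with Lagrangian boundary in \cite{oh:book1}, then give $\zeta \in W^{1,p}_{\mathrm{loc}}$ for all $p$, hence $\zeta \in C^{0,\delta}$. Thus $dw \in C^{0,\delta}$ and $w \in C^{1,\delta}$.

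The inductive step is the same estimate under the hypothesis ``$dw \in C^{m,\delta}(D^{(j)})$'' (equivalently $w \in C^{m+1,\delta}$, $\zeta,\alpha \in C^{m,\delta}$): on $D^{(j+1)}$ the term $\tfrac12|\zeta|^2 \in C^{m,\delta}$, so Schauder for $\delbar$ with real boundary condition gives $\alpha \in C^{m+1,\delta}$; in the $\zeta$-equation the coefficients $\Gamma^\pi(w),\, dw,\, \operatorname{Re}\alpha,\, \operatorname{Im}\alpha$ are in $C^{m,\delta}$, so $\delbar\zeta \in C^{m,\delta}$, and boundary Schauder for the totally real problem gives $\zeta \in C^{m+1,\delta}$; hence $dw \in C^{m+1,\delta}(D^{(j+1)})$. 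After $k+1$ steps one reaches $D_1$ with $dw \in C^{k,\delta}$. Each estimate is quantitative, with constant a polynomial (without constant term) in the norms controlled at the previous stage, and the first stage is controlled by $\|dw\|_{W^{1,2}(D_2)}$; composing the $O(k)$ estimates yields $\|dw\|_{C^{k,\delta}(D_1)} \le C_\delta\bigl(\|dw\|_{W^{1,2}(D_2)}\bigr)$ with $C_\delta$ continuous at $0$.

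The main technical obstacle is the boundary regularity for the $\zeta$-equation with the moving totally real boundary condition $\zeta(z) \in T_{w(z)}R_i$. The clean way to handle it is to precompose, near each boundary point of $w$, with a fixed smooth diffeomorphism of $M$ straightening $R_i$ to a constant linear totally real subspace of $\xi$ --- a coordinate change as regular as $(M,R_i)$, costing no derivatives --- after which \eqref{eq:equation-for-zeta0} becomes a $\delbar$-type system with the constant boundary condition $\zeta \in \R^n \subset \C^n$ modulo lower-order perturbations, to which ordinary boundary Schauder estimates apply; one then has to verify, as in \cite{oh:book1}, that commuting this straightening past the contact triad connection produces only terms of the orders already accounted for. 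The $\alpha$-equation raises no such issue since its boundary condition $\operatorname{Im}\alpha = 0$ is already linear, and the coupling is harmless because at each stage both inputs --- $|\zeta|^2$ for $\alpha$, and $\operatorname{Re}\alpha,\operatorname{Im}\alpha$ for $\zeta$ --- are already known at the current regularity level.
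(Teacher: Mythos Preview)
Your proposal is correct and follows essentially the same alternating bootstrap on the coupled system \eqref{eq:equation-for-zeta0}--\eqref{eq:equation-for-alpha} that the paper uses. The one technical difference is in handling the moving totally real boundary condition at the higher-derivative steps: you straighten $R_i$ by a diffeomorphism to reduce to a constant boundary condition, whereas the paper instead differentiates tangentially with respect to a Levi--Civita connection for which $R_i$ is totally geodesic (so that $\nabla_x^{\text{LC}}\zeta$ stays in $TR_i$ on the boundary) --- both are standard and interchangeable devices.
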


The rest of this section will be occupied by the proof which
employ the arguments of the alternating boot strap between $\zeta$ and $\alpha$
back and forth.

\subsection{Start of alternating boot-strap: $W^{1,2}$-estimate for $dw$.}

By writing the Reeb component $w^*\lambda = f \, dx + g\, dy$
in the isothermal coordinate $(x,y)$ with
$$
f = \lambda \left(\frac{\del w}{\del x}\right) , \quad g = \lambda \left(\frac{\del w}{\del y}\right)
$$
we express the complex-valued function $\alpha$ as
$\alpha = g + \sqrt{-1}f$ defined above in \eqref{eq:alpha}. Then
$\zeta$ satisfies the equation \eqref{eq:equation-for-zeta0}.
It is obvious to see that when $w^*\lambda = f \, dx + g\, dy$ is given
\eqref{eq:equation-for-zeta0} is a
linear elliptic equation of the Cauchy-Riemann type
with totally real boundary equation for $\zeta$.

Then Theorem  \ref{thm:local-W12} is directly translated into the following
$W^{1,2}$ bound for $\zeta$.

\begin{lem}\label{lem:W12-zeta} We have
$$
\|\zeta\|_{W^{1,2}(D_1)}^2 + \|w^*\lambda\|_{W^{1,2}(D_1)}^2
 \leq  C_4 (\|\zeta\|_{L^4(D_2)} ^4+ \|w^*\lambda\|_{L^4(D_1)}^4)
 $$
 with some adjustment of the constant $C_4$.
 In particular we have
 \bea
 \|\zeta\|_{W^{1,2}(D_1)}^2
&  \leq  & C_4 (\|\zeta\|_{L^4(D_2)}^4 + \|w^*\lambda\|_{L^4(D_1)}^4)\\
 \|w^*\lambda\|_{W^{1,2}(D_1)} \label{eq:W22-zeta}
&  \leq  & C_4 (\|\zeta\|_{L^4(D_2)}^4 + \|w^*\lambda\|_{L^4(D_1)}^4).
\label{eq:W22-fg}
 \eea
\end{lem}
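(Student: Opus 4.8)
The plan is to obtain Lemma~\ref{lem:W12-zeta} from Theorem~\ref{thm:local-W12} by a purely algebraic rewriting of both sides in terms of the pair $(\zeta,w^*\lambda)$: there is no new analysis here, only bookkeeping, and the only point requiring (mild) care is that every discrepancy between the triad connection $\nabla$ and ordinary coordinate differentiation is zeroth order in $dw$, so that all error terms are quadratic in $dw$ pointwise and hence quartic after integration. First I would record the pointwise identities in an isothermal coordinate $z=x+iy$ adapted to $\del\dot\Sigma\cap D_2$. Writing $w^*\lambda=f\,dx+g\,dy$ and $\alpha=g+\sqrt{-1}f$ as in \eqref{eq:alpha}, the $g$-orthogonal splitting $TM=\xi\oplus\R R_\lambda$ with $|R_\lambda|=1$ gives $|dw|^2=|d^\pi w|^2+|w^*\lambda|^2$, and $\delbar^\pi w=0$ forces $\eta:=\pi\,\del_y w=J\zeta$, so $|d^\pi w|^2=|\zeta|^2+|\eta|^2=2|\zeta|^2$. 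Hence $|dw|^2=2|\zeta|^2+|\alpha|^2$ pointwise, and in particular $\|dw\|_{L^4(D)}^4$ is comparable to $\|\zeta\|_{L^4(D)}^4+\|w^*\lambda\|_{L^4(D)}^4$ on any subdomain $D$.

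For the first-derivative terms I would differentiate $dw=d^\pi w+(w^*\lambda)\otimes R_\lambda$. By Corollary~\ref{cor:connection} one has $\nabla_Y R_\lambda=\frac12(\CL_{R_\lambda}J)JY$, a bounded tensor, so $|\nabla(dw)|^2$ is comparable to $|\nabla^\pi(d^\pi w)|^2+|d(w^*\lambda)|^2$ up to a term bounded by $|w^*\lambda|^2|dw|^2\le|dw|^4$; since $\nabla^\pi$ is Hermitian ($\nabla^\pi J=0$) we get $\nabla^\pi\eta=J\nabla^\pi\zeta$, hence $|\nabla^\pi(d^\pi w)|^2\simeq 2|\nabla^\pi\zeta|^2$; and $d(w^*\lambda)=df\wedge dx+dg\wedge dy$ makes $\|w^*\lambda\|_{W^{1,2}}$, as a one-form, comparable to the $W^{1,2}$-norm of $(f,g)$, i.e.\ of $\alpha$. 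Therefore $\|dw\|_{W^{1,2}(D_1)}^2$ is comparable to $\|\zeta\|_{W^{1,2}(D_1)}^2+\|w^*\lambda\|_{W^{1,2}(D_1)}^2$ modulo an additive $\|dw\|_{L^4(D_1)}^4\le\|dw\|_{L^4(D_2)}^4$, which will be harmless once everything is collected.

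Now I would conclude. Theorem~\ref{thm:local-W12}, in the $L^4$-only form obtained from \eqref{eq:without-bdyterm} by absorbing the lower-order $\|dw\|^2_{L^2(D_2)}$-term exactly as in \cite[Section~8.3]{oh:book1} (this is the content of Theorem~\ref{thm:local-W12-intro}), gives $\|dw\|_{W^{1,2}(D_1)}^2\le C\|dw\|_{L^4(D_2)}^4$; feeding in the two comparisons above and readjusting constants yields
\[
\|\zeta\|_{W^{1,2}(D_1)}^2+\|w^*\lambda\|_{W^{1,2}(D_1)}^2 \;\le\; C_4\bigl(\|\zeta\|_{L^4(D_2)}^4+\|w^*\lambda\|_{L^4(D_2)}^4\bigr),
\]
and the two ``in particular'' inequalities follow by discarding the nonnegative term one does not need on the left. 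If one insists on placing the $w^*\lambda$-term over the smaller domain $D_1$, one may instead estimate $w^*\lambda$ directly from $*d(w^*\lambda)=|\zeta|^2$ (Lemma~\ref{lem:*dw*lambda}) together with the $\delbar$-elliptic estimate with real boundary condition for the system \eqref{eq:atatau-equation}, but this refinement is not needed below. I expect no genuine obstacle here: the one thing to watch is simply that the torsion of $\nabla$ and the terms $\nabla R_\lambda$, $\nabla\Pi$ entering the two comparisons are all of schematic size (bounded tensor)$\,\times\,dw$, so every error is $\lesssim|dw|^2$ pointwise and $\lesssim\|dw\|_{L^4}^4$ after integration — the same mechanism that makes the Lagrangian analogue \cite[Theorem~8.5.5]{oh:book1} work, to be presented with the same brevity.
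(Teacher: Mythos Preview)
Your proposal is correct and matches the paper's approach exactly: the paper states this lemma without a separate proof, saying only that ``Theorem~\ref{thm:local-W12} is directly translated into the following $W^{1,2}$ bound for $\zeta$,'' and you have supplied precisely that translation via the pointwise decompositions $|dw|^2=2|\zeta|^2+|w^*\lambda|^2$ and $|\nabla(dw)|^2\simeq|\nabla^\pi\zeta|^2+|\nabla(w^*\lambda)|^2+O(|dw|^4)$. Your remark that the $\|w^*\lambda\|_{L^4(D_1)}$ on the right should more naturally be over $D_2$ is also apt; this appears to be a typo in the statement and your argument yields the $D_2$ version, which is what is used downstream.
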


We then consider the equation \eqref{eq:equation-for-zeta0} for $\zeta$ and
 obtain the estimate
$$
\|\overline \nabla^\pi \zeta\|_{W^{1,2}(D_1)}^2 \leq C (\|f\|_{W^{1,4}(D_2)}
+ \|g\|_{W^{1,4}(D_2)})\|\zeta\|_{W^{1,4}(D_2)}.
$$
By the standard estimate for the Riemann-Hilbert problem
with Dirichlet boundary condition for the Cauchy-Riemann operator
$\overline \nabla^\pi$ in $W^{2,2}$ Sobolev space combined with the inequality
$\|\zeta\|_{C^{0,\delta}(D_1)} \leq C \|\zeta\|_{W^{2,2}(D_1)}$, we derive
$$
\|\zeta\|_{C^{0,\delta}(D_1)}^2 \leq C_1(\|f\|_{W^{1,4}(D_2)} + \|g\|_{W^{1,4}(D_2)})
\|\zeta\|_{W^{1,4}(D_2)}
$$
with say $\delta = 2/5 < 1/2$. We attract readers' attention that the right hand side
involves only the $W^{1,4}$ norm of $dw = d^\pi w + w^*\lambda\, R_\lambda$.

\subsection{$C^{1,\delta}$-estimate for $w^*\lambda = f\, dx + g\, dy$}

Next we consider \eqref{eq:equation-for-alpha} which is another
Riemann-Hilbert problem with the real (or imaginary) boundary condition for the
Cauchy-Riemann operator $\delbar$.
Then again by the standard estimate for the Riemann-Hilbert problem
with the real (or imaginary) boundary condition, we derive
\be\label{eq:|alpha|22}
\|\alpha\|_{C^{1,\delta}(D_1)} \leq C_2 (\|\zeta\|^2_{C^{0,\delta}(D_2)}, \|\alpha\|_{C^0(D_2)}).
\ee
(See the basic H\"older estimate from \cite[Proposition 2.3.6]{sikorav:holo}
 for the equation of the type
$$
\delbar f + q(f) \del f = 0
$$
for example.)

\subsection{$C^{1,\delta}$-estimate for $d^\pi w$.}

At this stage, \eqref{eq:equation-for-zeta0}, which is of the form \eqref{eq:nablabar-P},
 implies that there exists a positive function $C_\delta = C_\delta(r)$  such that
$$
\|\overline \nabla^\pi \zeta\|_{C^{0,\delta}(D_1)} \leq C_\delta (\|\zeta \|_{C^{0,\delta}(D_2)})
$$
for all solutions thereof.  Here the function $C_\delta(r)$ is defined for all sufficiently small
$r$ (and continuous at $r = 0$) whose smallness depends on the bound for
$\|\zeta \|_{C_\delta^{0,\delta}(D_2)}$ obtained in the previous stage.

Again by the estimate for the equation of the Cauchy-Riemann type
$\delbar f + q(f) \del f = 0$  applied to \eqref{eq:equation-for-zeta0},
any solution $\zeta$ thereof indeed satisfies
$$
\|\zeta\|_{C^{1,\delta}(D_1)} \leq  C_\delta (\|\zeta \|_{C^{0,\delta}(D_2)},
\|\zeta \|_{C^{0}(D_2)})
$$
where again $C_\delta(r,s)$ is a continuous function at $(r,s) = (0,0)$.
Combining the two, we have derived
\be\label{eq:zeta-1delta}
\|\zeta\|_{C^{1,\delta}(D_1)}\leq C_\delta( \|\zeta \|_{C^{0,\delta}(D_2)},
\|\zeta \|_{C^{0}(D_2)})
\ee
for all solutions of \eqref{eq:equation-for-zeta}.

\subsection{$C^{2,\delta}$-estimate for $w^*\lambda$}

Now we consider \eqref{eq:equation-for-alpha} back.
By differentiating it by $x$, we get the same-type of equation
$$
\begin{cases}
\delbar (\nabla_x \alpha) + [\nabla_x,\delbar] \alpha = \nabla_x \nu \\
\nabla_x \alpha \in \R, \quad \text{\rm for }\,  z \in \del D_2
\end{cases}
$$
where $[\nabla_x,\delbar]$ is the commutator of $\nabla_x$ and $\delbar$ which is
a zero-order operator.
We have
$$
\nabla_x \nu = \langle \nabla_x \zeta, \zeta \rangle
$$
and hence the bound
$$
\|\nabla_\tau \nu\|_{C^{0,\delta}} \leq \|\zeta\|_{C^0} \|\nabla_x  \zeta\|_{C^{0,\delta}}
\|\zeta\|_{C^{0,\delta}} \|\nabla_x  \zeta\|_{C^0} \leq
C_\delta(\|\zeta_{C^{0,\delta}}, \|\zeta\|_{C^0}).
$$
By repeating the $C^{1,\delta}$-estimate from \cite[Proposition 2.3.6 (ii)]{sikorav:holo}
applied to $\nabla_x \alpha$ instead of $\alpha$, we obtain
\be\label{eq:alpha-step2}
\|\nabla_\tau \alpha \|_{C^{1,\delta}(D_1)}\leq C_\delta( \|\zeta \|_{C^{0,\delta}(D_2)},
\|\alpha\|_{C^{0,\delta}(D_2)}).
\ee
Then noting that \eqref{eq:equation-for-alpha} can be written as
$$
\frac12 \left( \frac{\del \alpha}{\del x} + i \frac{\del \alpha}{\del y}\right) = \frac12 \|\zeta\|^2,
$$
\eqref{eq:alpha-step2} also implies similar estimate for $\nabla_y \alpha = \frac{\del \alpha}{\del y}$.
This implies the inequality
$$
\|\nabla \alpha\|_{C^{1,\delta}(D_1)}\leq C_\delta( \|\zeta \|_{C^{0,\delta}(D_2)})
$$
which proves the $C^{2,\delta}$-estimate for $\alpha$.

\subsection{$C^{2,\delta}$-estimate for $d^\pi w$}

Then we go back to \eqref{eq:equation-for-zeta}.
In this step, we temporarily abandon the usage of triad connection but take
the Levi-Civita connection $\nabla^{\text \rm{LC}}$
with respect to which $R_i$ are totally geodesic as mentioned before
to take the covariant differential. (This is because if we take $\nabla_x\zeta$, the latter
may not be tangent to $R_i$ and so we cannot apply the previous step of \eqref{eq:equation-for-zeta0}.)

Using the fact that $\nabla^{\text{\rm LC}} = \nabla + A$ with a zero-order operator
$A$,  it will give rise to
\be\label{eq:equation-for-zeta}
\begin{cases}
\nabla_x^\pi (\nabla_x^{\text{\rm LC}} \zeta)
+ J \nabla_y^\pi  (\nabla_x^{\text{\rm LC}} \zeta)
- \frac12  g\, \CL_{R_\lambda}J \, (\nabla_x^{\text{\rm LC}} \zeta) + \frac12 f (\CL_{R_\lambda}J) J (\nabla_x^{\text{\rm LC}} \zeta)
+ Q(\zeta)= 0,\\
\nabla_x^{\text{\rm LC}} \zeta \in TR_i \quad \text{\rm for } z \in \del D_2
\end{cases}
\ee
where $Q$ is a linear zero-order operator of $\zeta$ whose coefficients
depend at most on $\zeta\, \, f, \, \nabla_x f, \, g, \, \nabla_x g$ which are all
of the class $C^{1,\delta}$ by now. The equation is again a Cauchy-Riemann type
for the variable $\nabla^{\text{\rm LC}}\zeta$.

The term $Q(\zeta)$ appears because taking the covariant
differential of \eqref{eq:equation-for-zeta0} not only involves $\nabla^{\text{\rm LC}}_x \zeta$
but also the differential of the coefficient functions of the linear equation, and
the commutator of the
$$
\left [\nabla^\pi_x,\nabla_x^{\text{\rm LC}} \right].
$$
Therefore by applying the previous $C^{1,\delta}$-estimate to $\nabla_x^{\text{\rm LC}}\zeta$
instead of $\zeta$, we obtain
$$
\|\nabla^{\text{\rm LC}}_x \zeta\|_{C^{1,\delta}(D_1)} \leq
C_\delta(\|\zeta\|_{C^{0,\delta}(D_2)}, \|\alpha \|_{C^{1,\delta}(D_2)},
\|\zeta\|_{C^0(D_2)}).
$$
Combining the above, we  then derive
$$
\|\zeta\|_{C^{2,\delta(D_1)}} \leq C_\delta(\|\alpha\|_{C^{1,\delta}(D_2)},
\|\zeta\|_{C^{1,\delta}(D_2)},
\|\zeta\|_{C^0(D_2)}, \|\alpha\|_{C^0(D_2)})
$$
with $C_\delta$ is the same kind of function as above.

\subsection{Wrap-up of the alternating boot-strap argument}

Now we repeat the above alternating boot strap arguments between $\zeta$ and $\alpha$
back and forth by taking the differential with respect to $\nabla_x^{\text{\rm LC}}$
to inductively derive the $C^{k,\delta}$-estimates both for $\zeta$ and $\alpha$
in terms of $\|\zeta\|_{L^4(D_2)}$ and $\|\alpha\|_{L^4(D_2)}$ which is equivalent to
considering the full $\|dw\|_{L^4}$.
This completes the proof of Theorem \ref{thm:local-regularity}.

\begin{rem} The polynomial growth of the number of arguments appearing in
the function $C_\delta$ above as taking derivatives more and more
 is consistent with similar polynomial  bound of higher covariant differential $(\nabla)^{k+1}(dw)$
 in terms of  $d^\pi w$ and $w^*\lambda$ in their $W^{k+2,2}$ Sobolev space bound
given for the contact instanton in the closed string context
in \cite[Theorem 1.7]{oh-wang:CR-map1}.
\end{rem}

\section{Vanishing of asymptotic charge and subsequence convergence}
\label{sec:subsequence-convergence}

In this section, we study the asymptotic behavior of contact instantons
on the Riemann surface $(\dot\Sigma, j)$ associated with a metric $h$ with \emph{strip-like ends}.
To be precise, we assume there exists a compact set $K_\Sigma\subset \dot\Sigma$,
such that $\dot\Sigma-\Int(K_\Sigma)$ is a disjoint union of punctured semi-disks
 each of which is isometric to the half strip $[0, \infty)\times [0,1]$ or $(-\infty, 0]\times [0,1]$, where
the choice of positive or negative strips depends on the choice of analytic coordinates
at the punctures.
We denote by $\{p^+_i\}_{i=1, \cdots, l^+}$ the positive punctures, and by $\{p^-_j\}_{j=1, \cdots, l^-}$ the negative punctures.
Here $l=l^++l^-$. Denote by $\phi^{\pm}_i$ such strip-like coordinates.
We first state our assumptions for the study of the behavior of boundary punctures.
(The case of interior punctures is treated in \cite[Section 6]{oh-wang:CR-map1}.)

\begin{defn}Let $\dot\Sigma$ be a boundary-punctured Riemann surface of genus zero with punctures
$\{p^+_i\}_{i=1, \cdots, l^+}\cup \{p^-_j\}_{j=1, \cdots, l^-}$ equipped
with a metric $h$ with \emph{strip-like ends} outside a compact subset $K_\Sigma$.
Let
$w: \dot \Sigma \to M$ be any smooth map with Legendrian boundary condition.
We define the total $\pi$-harmonic energy $E^\pi(w)$
by
\be\label{eq:endenergy}
E^\pi(w) = E^\pi_{(\lambda,J;\dot\Sigma,h)}(w) = \frac{1}{2} \int_{\dot \Sigma} |d^\pi w|^2
\ee
where the norm is taken in terms of the given metric $h$ on $\dot \Sigma$ and the triad metric on $M$.
\end{defn}

We put the following hypotheses in our asymptotic study of the finite
energy contact instanton maps $w$ as in \cite{oh-wang:CR-map1}, \emph{except not requiring the charge vanishing
condition $Q = 0$, which itself we will prove here under the hypothesis using the Legendrian boundary condition}:

\begin{hypo}\label{hypo:basic}
Let $h$ be the metric on $\dot \Sigma$ given above.
Assume $w:\dot\Sigma\to M$ satisfies the contact instanton equation \eqref{eq:contacton-Legendrian-bdy-intro}
and
\begin{enumerate}
\item $E^\pi_{(\lambda,J;\dot\Sigma,h)}(w)<\infty$ (finite $\pi$-energy);
\item $\|d w\|_{C^0(\dot\Sigma)} <\infty$.
\item $\Image w \subset \mathsf K \subset M$ for some compact set $\mathsf K$.
\end{enumerate}
\end{hypo}

Throughout this section, we work locally near one boundary puncture $p$, i.e., on a punctured semi-disc
$D^\delta(p) \setminus \{p\}$. By taking the associated conformal coordinates $\phi^+ = (\tau,t)
:D^\delta(p) \setminus \{p\} \to [0, \infty)\times [0,1]$ such that $h = d\tau^2 + dt^2$,
we need only look at a map $w$ defined on the half strip $[0, \infty)\times [0,1]\to M$
without loss of generality.

The above finite $\pi$-energy and $C^0$ bound hypotheses imply
\be\label{eq:hypo-basic-pt}
\int_{[0, \infty)\times [0,1]}|d^\pi w|^2 \, d\tau \, dt <\infty, \quad \|d w\|_{C^0([0, \infty)\times [0,1])}<\infty
\ee
in these coordinates.

We will use the following equality which is derived in \cite{oh-wang:connection}. (See \cite[Equation (3.1)]{oh-wang:connection}.)
\begin{lem} Let $h$ be a K\"aher metric of $(\Sigma,j)$. Suppose $w$ satisfies $\delbar^\pi w = 0$. Then
\be\label{eq:dw*lambda}
d(w^*\lambda) = \frac12 |d^\pi w|^2 dA
\ee
where $dA$ is the area form of $h$.
\end{lem}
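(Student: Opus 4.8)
The plan is to reduce \eqref{eq:dw*lambda} to a pointwise algebraic identity, using only naturality of the exterior derivative, the decomposition \eqref{eq:du}, and the first-order equation $\delbar^\pi w = 0$. First I would rewrite $d(w^*\lambda) = w^*(d\lambda)$ and fix an isothermal coordinate $z = x+iy$ near an arbitrary point of $\dot\Sigma$, normalized so that $h = dx^2+dy^2$ and $dA = dx\wedge dy$. It then suffices to evaluate $w^*(d\lambda)\left(\frac{\partial}{\partial x},\frac{\partial}{\partial y}\right) = d\lambda\!\left(\frac{\partial w}{\partial x},\frac{\partial w}{\partial y}\right)$.

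Next I would insert the splitting \eqref{eq:du}, writing $\frac{\partial w}{\partial x} = \left(\frac{\partial w}{\partial x}\right)^\pi + \lambda\!\left(\frac{\partial w}{\partial x}\right)R_\lambda$ and similarly in $y$. Because $R_\lambda \rfloor d\lambda = 0$, the Reeb components drop out, leaving $d\lambda\!\left(\left(\frac{\partial w}{\partial x}\right)^\pi,\left(\frac{\partial w}{\partial y}\right)^\pi\right)$. Now $\delbar^\pi w = 0$ gives $\left(\frac{\partial w}{\partial y}\right)^\pi = J\left(\frac{\partial w}{\partial x}\right)^\pi$ --- the same identity already invoked in the proof of Lemma \ref{lem:Neunman-bdy} --- so the quantity becomes $d\lambda\!\left(\left(\frac{\partial w}{\partial x}\right)^\pi, J\left(\frac{\partial w}{\partial x}\right)^\pi\right)$.

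Finally I would use the defining relation of the contact triad metric, $g_\xi(Y,Z) = d\lambda(Y,JZ)$ for $Y,Z \in \xi$ (see Section \ref{subsec:connection} and \cite{oh-wang:connection}), to recognize this as $\left|\left(\frac{\partial w}{\partial x}\right)^\pi\right|^2$. Since $J$ is a $g_\xi$-isometry of $\xi$ and $\{\partial_x,\partial_y\}$ is $h$-orthonormal, $|d^\pi w|^2 = \left|\left(\frac{\partial w}{\partial x}\right)^\pi\right|^2 + \left|\left(\frac{\partial w}{\partial y}\right)^\pi\right|^2 = 2\left|\left(\frac{\partial w}{\partial x}\right)^\pi\right|^2$, so $w^*(d\lambda)\left(\frac{\partial}{\partial x},\frac{\partial}{\partial y}\right) = \frac12|d^\pi w|^2$; multiplying by $dx\wedge dy = dA$ and noting the base point was arbitrary yields \eqref{eq:dw*lambda}.

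There is essentially no analytic difficulty here: the argument is pointwise and purely algebraic. The only thing requiring care is bookkeeping of conventions --- the normalization relating $d\lambda|_\xi$ and $g_\xi$ (which is what produces the factor $\tfrac12$) and the sign convention in $\delbar^\pi$ --- both of which must be taken consistently with Section \ref{subsec:connection}. It is also worth observing that the identity is conformally invariant in the domain, since rescaling $h$ multiplies $dA$ and $|d^\pi w|^2$ by reciprocal factors while leaving $d(w^*\lambda) = w^*(d\lambda)$ untouched; this is consistent with the statement being phrased for an arbitrary K\"ahler metric $h$.
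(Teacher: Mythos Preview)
Your argument is correct and is exactly the standard direct computation in isothermal coordinates that underlies this identity; the paper itself does not reprove it here but simply cites \cite[Equation (3.1)]{oh-wang:connection}, where the same pointwise calculation is carried out. Your bookkeeping of conventions matches the paper's (in particular $d\lambda(Y,JY) = g_\xi(Y,Y)$ on $\xi$, cf.\ Section \ref{subsec:connection}), and your remark on conformal invariance is the right consistency check.
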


Let $w$ satisfy Hypothesis \ref{hypo:basic}.
In addition we assume that the limit $\lim_{\tau \to \infty} w(\tau, \dot) \to \infty$ exists. Then
we can associate two
natural asymptotic invariants at each puncture defined as
\beastar
T & := & \lim_{r \to \infty} \int_{\{r\}\times [0,1]} (w|_{\{0\}\times [0,1] })^*\lambda \label{eq:TQ-T}\\
Q & : = & \lim_{r \to \infty} \int_{\{r\}\times [0,1]}((w|_{\{0\}\times [0,1] })^*\lambda\circ j).\label{eq:TQ-Q}
\eeastar
(Here we only look at positive punctures. The case of negative punctures is similar.)
We call $T$ the \emph{asymptotic contact action}
and $Q$ the \emph{asymptotic contact charge} of the contact instanton $w$ at the given puncture.

\begin{rem}\label{rem:TQ}
In particular \eqref{eq:dw*lambda} holds for any contact instanton $w$.  By Stokes' formula, we can express
$$
T = \frac{1}{2} \int_{[s,\infty) \times [0,1]} |d^\pi w|^2 + \int_{\{s\}\times [0,1]}(w|_{\{s\}\times [0,1]})^*\lambda, \quad
\text{for any } s\geq 0
$$
does not depend on $s \geq 0$.
\end{rem}

The proof of the following subsequence convergence result largely follows that of
\cite[Theorem 6.4]{oh-wang:CR-map1}. Since we need to take care of the Legendrian boundary condition and also need to prove that the charge $Q$ vanishes in the course of the proof,
we duplicate the details of the proof therein in the first half of our proof and then
explain in the second half how the charge vanishing occurs
under the Legendrian boundary condition. One may say that \emph{the presence of Legendrian barrier
prevents the instanton from spiraling.}

\begin{thm}[Subsequence Convergence]\label{thm:subsequence}
Let $w:[0, \infty)\times [0,1]\to M$ satisfy the contact instanton equations \eqref{eq:contacton-Legendrian-bdy-intro}
and Hypothesis \eqref{eq:hypo-basic-pt}.

Then for any sequence $s_k\to \infty$, there exists a subsequence, still denoted by $s_k$, and a
massless instanton $w_\infty(\tau,t)$ (i.e., $E^\pi(w_\infty) = 0$)
on the cylinder $\R \times [0,1]$  that satisfies the following:
\begin{enumerate}
\item $\delbar^\pi w_\infty = 0$ and
$$
\lim_{k\to \infty}w(s_k + \tau, t) = w_\infty(\tau,t)
$$
in the $C^l(K \times [0,1], M)$ sense for any $l$, where $K\subset [0,\infty)$ is an arbitrary compact set.
\item $w_\infty$ has vanishing asymptotic charge $Q = 0$ and satisfies $w_\infty(\tau,t)= \gamma(T\, t)$
for some Reeb chord $\gamma$ is some Reeb chord joining $R_0$ and $R_1$ with period $T$ at each puncture.
\item $T \neq 0$ at each  puncture with the associated pair $(R,R')$ of boundary condition with $R \cap R'
= \emptyset$.
\end{enumerate}
\end{thm}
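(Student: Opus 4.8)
The plan is to adapt the argument for \cite[Theorem~6.4]{oh-wang:CR-map1}: first extract a limiting massless instanton from the translating sequence using the a priori estimates, and then use the \emph{Legendrian barrier} to determine its structure and, crucially, to force the asymptotic charge to vanish. For the extraction, set $w_k(\tau,t):=w(s_k+\tau,t)$, which is defined on $[-s_k,\infty)\times[0,1]$ and solves \eqref{eq:contacton-Legendrian-bdy-intro}. Since $\|dw\|_{C^0}<\infty$ and $\Image w\subset\mathsf K$, the $W^{1,2}$-norms of $dw_k$ on any relatively compact subset of $\R\times[0,1]$ are bounded uniformly in $k$, so Theorem~\ref{thm:local-regularity} yields uniform $C^{\ell,\delta}_{\mathrm{loc}}$ bounds on $w_k$ for every $\ell$; working in coordinate charts around $\mathsf K$ and applying Arzel\`a--Ascoli with a diagonal argument, I would pass to a subsequence with $w_k\to w_\infty$ in $C^\ell_{\mathrm{loc}}(\R\times[0,1],M)$ for all $\ell$. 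Passing to the limit in the equations, $w_\infty$ satisfies $\delbar^\pi w_\infty=0$, $d(w_\infty^*\lambda\circ j)=0$, and $w_\infty(\tau,0)\in R_0$, $w_\infty(\tau,1)\in R_1$. Masslessness is then immediate: since $E^\pi(w)<\infty$, for any compact $K\subset\R$ one has $\int_{K\times[0,1]}|d^\pi w_k|^2=\int_{(s_k+K)\times[0,1]}|d^\pi w|^2\to0$, and $C^1_{\mathrm{loc}}$-convergence gives $\int_{K\times[0,1]}|d^\pi w_\infty|^2=0$ for all $K$, so $E^\pi(w_\infty)=0$; this proves item (1).

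Next I would analyze the massless limit. From $d^\pi w_\infty=0$ we get $dw_\infty=w_\infty^*\lambda\otimes R_\lambda$, and by \eqref{eq:dw*lambda}, $d(w_\infty^*\lambda)=\tfrac12|d^\pi w_\infty|^2\,dA=0$; together with $d(w_\infty^*\lambda\circ j)=0$ this shows $w_\infty^*\lambda$ is a bounded harmonic one-form on $\R\times[0,1]$ (bounded because $\|dw\|_{C^0}<\infty$). Writing $w_\infty^*\lambda=a\,d\tau+b\,dt$, the two closedness equations become $\partial_\tau a=-\partial_t b$ and $\partial_\tau b=\partial_t a$, so $g:=a-ib$ is holomorphic on the strip. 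Along $t=0$ the tangential derivative $\partial_\tau w_\infty$ lies in $TR_0\subset\xi$ while $dw_\infty$ takes values in $\R R_\lambda$; since $TM=\xi\oplus\R R_\lambda$, this forces $\partial_\tau w_\infty(\tau,0)=0$, hence $a(\tau,0)=\lambda(\partial_\tau w_\infty)(\tau,0)=0$, and similarly $a(\tau,1)=0$. A bounded harmonic function on $\R\times[0,1]$ vanishing on $t=0$ and $t=1$ is identically zero (odd reflection across the two boundary lines produces a bounded harmonic, hence constant, hence zero, function on $\C$), so $a\equiv0$; then $g=-ib$ is holomorphic with identically vanishing real part, hence $b$ is a constant $c$. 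Therefore $w_\infty^*\lambda=c\,dt$, so $\partial_\tau w_\infty$ has vanishing $\pi$- and $\lambda$-components and $w_\infty(\tau,t)=\gamma(t)$ is $\tau$-independent, while $\partial_t w_\infty=\lambda(\partial_t w_\infty)R_\lambda=cR_\lambda$ shows $\gamma$ is a Reeb trajectory; after reparametrization $w_\infty(\tau,t)=\gamma(Tt)$ with $\gamma$ the unit-speed Reeb chord joining $\gamma(0)\in R_0$ to $\gamma(1)\in R_1$, with $T=\int_{\{r\}\times[0,1]}w_\infty^*\lambda=c$ and $Q=\int_{\{r\}\times[0,1]}(w_\infty^*\lambda\circ j)=-\int_0^1 a\,dt=0$. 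This proves item (2). For item (3), if $T=c=0$ then $w_\infty$ is the constant map at a point of $R_0\cap R_1=R\cap R'$, contradicting $R\cap R'=\emptyset$, so $T\neq0$.

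The main obstacle is the analysis of the massless limit, and within it two points deserve the most care: (a) showing that the Legendrian condition, \emph{after} passing to the massless limit, forces $\partial_\tau w_\infty$ to vanish along the boundary lines --- this is the limiting form of the Neumann-type analysis of Lemma~\ref{lem:Neunman-bdy} combined with the splitting $TM=\xi\oplus\R R_\lambda$; and (b) the Liouville-type step upgrading ``$a$ harmonic, $a=0$ on both boundary lines'' to ``$a\equiv0$'', which genuinely uses the $C^0$-bound on $dw$. This Liouville step is exactly the mechanism by which the Legendrian barrier annihilates the asymptotic charge and removes the spiraling phenomenon, so it is the conceptual core of the theorem. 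One must also be careful to justify that the limit is genuinely defined on all of $\R\times[0,1]$ (every compact $\tau$-interval is eventually contained in $[-s_k,\infty)$) and that the $C^\infty_{\mathrm{loc}}$-convergence indeed lets one pass the PDEs and the closed boundary conditions to $w_\infty$.
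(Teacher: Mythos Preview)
Your proposal is correct and follows essentially the same route as the paper: extract a $C^\infty_{\mathrm{loc}}$ subsequence limit via the a priori estimates of Theorem~\ref{thm:local-regularity}, observe masslessness from the tail of $E^\pi$, write $w_\infty^*\lambda=a\,d\tau+b\,dt$ as a bounded harmonic one-form, use the Legendrian boundary to force $a|_{t=0,1}=0$ and hence $a\equiv 0$ by a Liouville argument, and conclude $w_\infty(\tau,t)=\gamma(Tt)$ with $Q=0$. The paper's argument differs only in presentation (it phrases the boundary vanishing directly via $\lambda(\partial_\tau w_\infty)=0$ rather than through the splitting $TM=\xi\oplus\R R_\lambda$, and it computes $T=b$ by an explicit Stokes argument), but the substance is the same.
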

\begin{proof}
For a given contact instanton $w: [0, \infty)\times [0,1]\to M$, we consider the family of maps
$w_s: [-s, \infty) \times [0,1] \to M$ defined by
$w_s(\tau, t) = w(\tau + s, t)$ with Legendrian boundary condition
$$
w_s(\tau,0) \in R, \quad w_s(\tau,1) \in R'
$$
for $R,\, R' \in \{R_0, \ldots, R_k\}$.
For any compact set $K\subset \R$, there exists some $s_0$ large enough such that
$K\subset [-s, \infty)$ for every $s\geq s_0$. For such $s\geq s_0$, we can also get an $[s_0, \infty)$-family of maps by defining $w^K_s:=w_s|_{K\times [0,1]}:K\times [0,1]\to M$.

The asymptotic behavior of $w$ at infinity can be understood by studying the limiting behavior of the sequence of maps
$\{w^K_s:K\times [0,1]\to M\}_{s\in [s_0, \infty)}$, for each given compact set $K\subset \R$.

First of all,
it is easy to check that under Hypothesis \ref{hypo:basic}, the family
$\{w^K_s:K\times [0,1]\to M\}_{s\in [s_0, \infty)}$ satisfies the following
\begin{enumerate}
\item
For every $s\in [s_0, \infty)$,
$$
\begin{cases}
\delbar^\pi w^K_s=0, \quad d((w^K_s)^*\lambda\circ j)=0 \\
w_s(\tau,0) \in R, \quad w_s(\tau,1) \in R'.
\end{cases}
$$
\item $\lim_{s\to \infty}\|d^\pi w^K_s\|_{L^2(K\times [0,1])}=0$
\item $\|d w^K_s\|_{C^0(K\times [0,1])}\leq \|d w\|_{C^0([0, \infty)\times [0,1])}<\infty$.
\end{enumerate}

From (1) and (3) together with the compactness of $\overline{\Image w} \subset M$
(which provides a uniform $L^2(K\times [0,1])$ bound)
and Theorem \ref{thm:local-regularity}, we obtain
$$
\|w^K_s\|_{C^{1,\alpha}(K\times [0,1])}\leq C_{K;(1,\alpha)}<\infty,
$$
for some constant $C_{K;(1,\alpha)}$ independent of $s$.
Then
 $\{w^K_s:K\times [0,1]\to M\}_{s\in [s_0, \infty)}$ is sequentially pre-compact.
Therefore, for any sequence $s_k \to \infty$, there exists a subsequence, still denoted by $s_k$,
and some limit $w^K_\infty\in C^{1}(K\times [0,1], M)$ (which may depend on the subsequence $\{s_k\}$), such that
$$
w^K_{s_k}\to w^K_{\infty}, \quad \text {as } k\to \infty,
$$
in the $C^{1}(K\times [0,1], M)$-norm sense.
Furthermore, combining this with (2) and \eqref{eq:dw*lambda}, we get
$$
dw^K_{s_k}\to dw^K_{\infty} \quad \text{and} \quad dw^K_\infty=(w^K_\infty)^*\lambda\otimes R_\lambda,
$$
and both $(w^K_\infty)^*\lambda$ and $(w^K_\infty)^*\lambda\circ j$ are harmonic one-forms: Closedness of
$(w^K_\infty)^*\lambda$ follows from \eqref{eq:dw*lambda} and the convergence
$$
|d^\pi w^K_\infty|^2 = \lim_{k \to \infty}|d^\pi w^K_{s_k}|^2 = 0.
$$
Closedness of $(w^K_\infty)^*\lambda\circ j$, which is equivalent to the coclosedness of $(w^K_\infty)^*\lambda$,
follows from the hypothesis. The limit map $w_\infty^K$ also satisfies
Legendrian boundary conditions
$$
w_\infty^K(\tau,0) \in R, \quad w_\infty^K(\tau,1) \in R'
$$
since $w^K_{s_k} \to w^K_\infty$ in $C^1$-topology and each $w^K_{s_k}$ satisfies
the same Legendrian boundary condition.

Note that these limiting maps $w^K_\infty$ have a common extension $w_\infty: \R\times [0,1]\to M$
by a  diagonal sequence argument where, one takes a sequence of compact sets $K$ one including another and exhausting $\R$.
Then it follows that  $w_\infty$ is $C^1$,  satisfies
$$
\|d w_\infty\|_{C^0(\R\times [0,1])}\leq \|d w\|_{C^0([0, \infty)\times [0,1])}<\infty
$$
and $d^\pi w_\infty = 0$. Therefore
$$
dw_\infty=w_\infty^*\lambda \otimes R_\lambda
$$
Then we derive from Theorem \ref{thm:local-regularity} that $w_\infty$ is actually in $C^\infty$.
Also notice that both $w_\infty^*\lambda$ and $w_\infty^*\lambda\circ j$ are bounded harmonic one-forms on $\R\times [0,1]$.
\begin{lem} We have
$$
w_\infty^*\lambda= b\,dt, \quad w_\infty^*\lambda\circ j=b\,d\tau
$$
for some constant $b$.
\end{lem}
\begin{proof}
We have $w_\infty^*\lambda = f\,d\tau + g \, dt$ for some bounded functions $f, \, g$.
Furthermore $w_\infty^*\lambda$ is harmonic if and only if $(f,g)$ satisfies the Cauchy-Riemann equation
\be\label{eq:harmonic-fg}
\frac{\del f}{\del t} = \frac{\del g}{\del \tau}, \, \frac{\del f}{\del \tau} = - \frac{\del g}{\del t}
\ee
for the complex coordinate $\tau + it$.
This in particular implies $\Delta f = 0$. On the other hand,
the Legendrian boundary condition
$$
w_\infty(\tau,0) \in R, \quad w(\tau,1) \in R'
$$
implies
\be\label{eq:Q=0}
\lambda\left(\frac{\del w_\infty}{\del \tau}(\tau,0)\right) = 0 = \lambda\left(\frac{\del w_\infty}{\del \tau}(\tau,1)\right).
\ee
Therefore $f$ is bounded and satisfies
$$
\begin{cases}
\Delta f = 0 \quad &\text{on }\, R \times [0,1]\\
f(\tau,0) = f(\tau,1) = 0 \quad &\text{for all }\, \tau \in \R.
\end{cases}
$$
It follows from standard results on the harmonic functions on the strip $\R \times [0,1]$
satisfying the Dirichlet boundary condition that $f = 0$.
The equation \eqref{eq:harmonic-fg} in turn implies $\frac{\del g}{\del \tau} = 0 = \frac{\del g}{\del t}$
from which we derive $g$ is a constant function.
Therefore the forms $w_\infty^*\lambda$ and $w_\infty^*\lambda\circ j$  must be written in  the form
$$
w_\infty^*\lambda= b\,dt, \quad w_\infty^*\lambda\circ j=b\,d\tau.
$$
for some constants $b$. This finishes the proof.
\end{proof}

Obviously we have
$$
\int_{\{r\}\times [0,1]}(w_\infty|_{\{r\}\times [0,1]})^*\lambda\circ j = \int_{\{r\} \times [0,1]}b\, d\tau = 0
$$
for all $r \geq 0$ and hence
$$
Q =  \lim_{r \to  \infty} \int_{\{r\}\times [0,1]}\lim_{k\to \infty}(w_{s_k}|_{\{r\}\times [0,1]})^*\lambda\circ j
= 0.
$$
We now show that $T = b$.

By taking an arbitrary point $r\in K$, since $w_\infty|_{\{r\}\times [0,1]}$ is the limit of some sequence $w_{s_k}|_{\{r\}\times [0,1]}$
in the $C^1$ sense, we derive
\beastar
b& = & \int_{\{r\}\times [0,1]}(w_\infty|_{\{r\}\times [0,1]})^*\lambda
= \int_{\{r\}\times [0,1]}\lim_{k\to \infty}(w_{s_k}|_{\{r\}\times [0,1]})^*\lambda\\
&=&\lim_{k\to \infty}\int_{\{r\}\times [0,1]}(w_{s_k}|_{\{r\}\times [0,1]})^*\lambda
= \lim_{k\to \infty}\int_{\{r+s_k\}\times [0,1]}(w|_{\{r+s_k\}\times [0,1]})^*\lambda\\
&=&\lim_{k\to \infty}(T-\frac{1}{2}\int_{[r+s_k, \infty)\times [0,1]}|d^\pi w|^2)\\
&=&T-\lim_{k\to \infty}\frac{1}{2}\int_{[r+s_k, \infty)\times [0,1]}|d^\pi w|^2
= T
\eeastar
Here in the derivation, we use Remark \ref{rem:TQ}.
As in \cite{oh-wang:CR-map1}, we conclude that the image of $w_\infty$ is contained in
a single leaf of the Reeb foliation by the connectedness of $\R \times [0,1]$.
Let $\gamma: \R \to M$ be a parametrisation of
the leaf so that $\dot \gamma = R_\lambda(\gamma)$. Then we can write $w_\infty(\tau, t)=\gamma(s(\tau, t))$, where
$s:\R\times [0,1]\to \R$ and $s=T\,t+c_0$ since $ds=T\,dt$, where $c_0$ is some constant.

From this we derive that, if $T\neq 0$,
$\gamma$ is a (nonconstant) Reeb chord of period $T$ joining $R$ and $R'$.
Of course, $T$ could be zero and $w_\infty$ is a constant map and so corresponds to an intersection point $R \cap R'$. The latter case will not occur provided $R \cap R' =\emptyset$. This finishes the proof.
This finishes the proof.
\end{proof}

\begin{rem}\label{rem:big-difference} This charge vanishing $Q = 0$
of the massless instanton is a big advantage for
the current open string case which is a big deviation from
the closed string case studied in \cite{oh-wang:CR-map1,oh-wang:CR-map2,oh:contacton}.
We refer to \cite[Section 8.1]{oh:contacton} for the classification of massless instantons
on the cylinder $\R \times S^1$ for which the kind of
massless instantons with $Q \neq 0$ but $T = 0$ appear
 or the closed string case. This is called the \emph{appearance of spiraling instantons along the
Reeb core}, which is the only obstacle towards the Fredholm theory and the compactification of
the moduli space of finite energy contact instantons for the general closed string context.

The above theorem removes this obstacle for the open string case of Legendrian boundary condition.
One could say that the presence of the Legendrian obstacle
blocks this spiraling phenomenon of the contact instantons.
In sequels to the present paper as in \cite{oh:entanglement1,oh:entanglement2},
we will construct the Fukaya type category on contact manifolds whose objects are Legendrian submanifolds and morphisms and products by counting appropriate
instantons with prescribed asymptotic conditions and Legendrian boundary conditions.
\end{rem}

\begin{rem} We mention that for a generic $k+1$ tuple $\{R_0,\ldots, R_k\}$
 $R_i$'s are pairwise disjoint  by the dimensional reason, and a constant solution
 cannot arise as an asymptotic chord connecting two different $R \neq R' $ (trans-chord)
 with a pair $(R,R')$ therefrom. On the other hand, when $R = R'$, the asymptotic
 limit at the relevant puncture could be a constant curve which will then have $Q = 0 = T$.
 In this case it is shown in \cite[Theorem 8.7]{oh:contacton} that such a puncture is
 removable for the case of interior puncture \emph{under the finite energy hypothesis}:
 The same proof applies to the current case
 of boundary puncture with the a priori boundary estimates provided in Theorem \ref{thm:local-W12}
 and \ref{thm:local-regularity}.
 \end{rem}

From the previous theorem, we  immediately get the following corollary as in \cite[Section 8]{oh-wang:CR-map1}. We first recall the standard notion of nondegeneracy of Reeb chords.

\begin{defn}\label{defn:nondegeneracy-chords}
We say a Reeb chord $(\gamma, T)$ of $(R_0,R_1)$ is nondegenerate if
the linearization map $\Psi_\gamma = d\phi^T(p): \xi_p \to \xi_p$ satisfies
$$
\Psi_\gamma(T_{\gamma(0)} R_0) \pitchfork T_{\gamma(1)} R_1  \quad \text{\rm in }  \,  \xi_{\gamma(1)}
$$
or equivalently
$$
\Psi_\gamma(T_{\gamma(0)} R_0) \pitchfork T_{\gamma(1)} Z_{R_1} \quad \text{\rm in} \, T_{\gamma(1)}M.
$$
\end{defn}
More generally, we consider the following situation.
We recall the definition of \emph{Reeb trace} $Z_R$ of a Legendrian submanifold
$$
Z_R: = \bigcup_{t \in \R} \phi_{R_\lambda}^t(R).
$$
(See \cite[Appendix B]{oh:contacton-transversality} for detailed discussion on its genericity.)
\begin{hypo}[Nondegeneracy]\label{hypo:nondegeneracy}
Let $\vec{R}=(R_1,\cdots,R_k)$ be a chain of Legendrian submanifolds.
We assume that we have
$$
Z_{R_i} \pitchfork R_j
$$
for all $i, \, j= 1,\ldots, k$.
\end{hypo}
Here $\phi^t_{R_\lambda}$ is the flow generated by the Reeb vector field $R_\lambda$.
We note that this nondegeneracy is equivalent the nondegeneracy of Reeb chords between $R_i$ and $R_j$.
(See \cite[Corollary 4.8]{oh:entanglement1}.)

\begin{cor}\label{cor:tangent-convergence} Let $w: \dot \Sigma \to M$ satisfy the contact instanton equation \eqref{eq:contacton-Legendrian-bdy-intro} and Hypothesis \eqref{eq:hypo-basic-pt}.
We assume that the pair $(\lambda, \vec R)$ is nondegenerate
in the sense of Hypothesis \ref{hypo:nondegeneracy}.
Then on each strip-like end with strip-like coordinates $(\tau,t) \in [0,\infty) \times [0,1]$ near a puncture
\beastar
&&\lim_{s\to \infty}\left|\pi \frac{\del w}{\del\tau}(s+\tau, t)\right|=0, \quad
\lim_{s\to \infty}\left|\pi \frac{\del w}{\del t}(s+\tau, t)\right|=0\\
&&\lim_{s\to \infty}\lambda(\frac{\del w}{\del\tau})(s+\tau, t)=0, \quad
\lim_{s\to \infty}\lambda(\frac{\del w}{\del t})(s+\tau, t)=T
\eeastar
and
$$
\lim_{s\to \infty}|\nabla^l dw(s+\tau, t)|=0 \quad \text{for any}\quad l\geq 1.
$$
All the limits are uniform for $(\tau, t)$ in $K\times [0,1]$ with compact $K\subset \R$.
\end{cor}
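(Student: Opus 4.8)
The plan is to deduce Corollary~\ref{cor:tangent-convergence} from the subsequence convergence Theorem~\ref{thm:subsequence} and the a priori bounds of Theorem~\ref{thm:local-regularity} by the standard \emph{extract-a-subsequence-and-contradict} device, and then to upgrade subsequential convergence to a genuine limit using the nondegeneracy Hypothesis~\ref{hypo:nondegeneracy}. Fix the boundary puncture $p$ with associated Legendrian pair $(R,R')$ and strip-like coordinates $(\tau,t)\in[0,\infty)\times[0,1]$, and for $s\ge 0$ write $w_s(\tau,t):=w(s+\tau,t)$. By Hypothesis~\eqref{eq:hypo-basic-pt} together with Theorem~\ref{thm:local-regularity} applied on a slightly larger semi-strip, the family $\{w_s\}$ is bounded in $C^{l}(K\times[0,1],M)$ for every $l$ and every compact $K\subset\R$, with bounds independent of $s$; hence $\{w_s\}$ is precompact in $C^{l}_{\mathrm{loc}}$, and Theorem~\ref{thm:subsequence} identifies every $C^{\infty}_{\mathrm{loc}}$ subsequential limit $w_\infty$ of $w_{s_k}$ (with $s_k\to\infty$) as a map of the form $w_\infty(\tau,t)=\gamma(T\,t)$, where $\gamma$ is a Reeb chord from $R$ to $R'$ and $T$ is the asymptotic action at $p$, which by Remark~\ref{rem:TQ} is independent of the subsequence. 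Differentiating this normal form once yields $\pi\,\del_\tau w_\infty\equiv 0$, $\pi\,\del_t w_\infty\equiv 0$, $\lambda(\del_\tau w_\infty)\equiv 0$ and $\lambda(\del_t w_\infty)\equiv T$ (using $\del_\tau w_\infty=0$, $\del_t w_\infty=T\,R_\lambda(\gamma)$, $\pi R_\lambda=0$, $\lambda(R_\lambda)=1$), while $dw_\infty=T\,R_\lambda\otimes dt$ combined with $\nabla_{R_\lambda}R_\lambda=0$ from Theorem~\ref{thm:connection}(3) along the Reeb leaf gives $\nabla^{l}(dw_\infty)\equiv 0$ for all $l\ge 1$.

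Next I would argue by contradiction, one assertion at a time. If, for instance, $\lambda(\del_t w)(s+\tau,t)$ did not converge to $T$ uniformly on $K\times[0,1]$, there would be $\epsilon>0$, $s_k\to\infty$ and $(\tau_k,t_k)\in K\times[0,1]$ with $|\lambda(\del_t w)(s_k+\tau_k,t_k)-T|\ge\epsilon$; passing to a subsequence so that $w_{s_k}\to w_\infty$ in $C^{1}_{\mathrm{loc}}$ and $(\tau_k,t_k)\to(\tau_\infty,t_\infty)$, the normal form above forces $\lambda(\del_t w)(s_k+\tau_k,t_k)\to\lambda(\del_t w_\infty)(\tau_\infty,t_\infty)=T$, a contradiction. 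The same argument, using $C^{1}_{\mathrm{loc}}$ convergence for the four first-order limits and $C^{l+1}_{\mathrm{loc}}$ convergence for $|\nabla^{l}dw|\to 0$, settles all the stated limits, uniformly in $(\tau,t)\in K\times[0,1]$.

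Finally, for the stronger reading that the convergence is to a \emph{single} limit (so that ``the'' asymptotic chord at $p$ is well defined), I would use that Hypothesis~\ref{hypo:nondegeneracy} makes every Reeb chord between the $R_i$'s nondegenerate, hence isolated, so the set of Reeb chords from $R$ to $R'$ is discrete. The asymptotic limit set $\{w_\infty : w_\infty=\lim_k w_{s_k},\; s_k\to\infty\}$ is then a nonempty compact subset of this discrete set which is moreover connected, being the $\omega$-limit set of the continuous precompact curve $s\mapsto w_s$ in $C^{1}_{\mathrm{loc}}$; hence it is a single point $w_\infty=\gamma(T\,\cdot)$, and $w(s+\cdot,\cdot)\to\gamma(T\,\cdot)$ in $C^{\infty}(K\times[0,1])$ as $s\to\infty$. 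The only genuinely substantial input is Theorem~\ref{thm:subsequence}, which is already available; within the present corollary the sole mildly delicate point is this last upgrade from subsequential to honest convergence, resting on the subsequence-independence of $T$ (Remark~\ref{rem:TQ}), discreteness of the chord set from nondegeneracy, and connectedness of the asymptotic limit set of the $\R$-family $\{w_s\}$ --- everything else is routine.
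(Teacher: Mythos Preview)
Your argument is correct and follows essentially the same route as the paper: extract a subsequence, invoke Theorem~\ref{thm:subsequence} to identify the limit as $\gamma(T\,t)$, and contradict any failure of uniform convergence using the subsequence-independence of $T$. The paper handles the higher-order decay more tersely, appealing to the uniform $C^{k,\delta}$ estimates of Theorem~\ref{thm:local-regularity} once the $C^1$-decay is in hand, whereas you run the subsequence-and-contradict argument directly at each level $l$ together with the explicit observation $\nabla^l(dw_\infty)=0$ (via $\nabla_{R_\lambda}R_\lambda=0$); both are fine and amount to the same thing.

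Your final paragraph on upgrading to a \emph{single} asymptotic chord via discreteness and connectedness of the $\omega$-limit set is correct but unnecessary for the corollary as stated: the four first-order limits and the higher-order limits depend only on $T$, not on which Reeb chord $\gamma$ the subsequence selects, so the contradiction argument already closes without ever needing uniqueness of $\gamma$. (The paper defers the genuine $C^0$ convergence $w(\tau,\cdot)\to z(\cdot)$ to Proposition~\ref{prop:czero-convergence} in the next section.)
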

\begin{proof} The argument of the proof of this corollary is the same as that of
\cite[Corollary 5.11]{oh-wang:CR-map1}, which considers the closed string case.
Since we need to consider the boundary condition
and the vanishing of $Q$ is automatic in the present case,
we  provide the details of its proof here  for readers' convenience with minor adjustment
in details.

We first consider the first derivative estimate, i.e., the $C^1$-decay estimate.
If any of the above limits doesn't hold uniformly (take $|\pi \frac{\del w}{\del\tau}(s+\tau, t)|$ for example),
then there exists some $\epsilon_0>0$ and a sequence
$k\to \infty$, $(\tau_j, t_j)\in K\times S^1$ such that $|\pi \frac{\del w}{\del\tau}(s_k+\tau_j, t_j)|\geq \epsilon_0$.
Then we can take a subsequence limit $(\tau_j, t_j)\to (\tau_0, t_0)$ such that
$|\pi \frac{\del w}{\del\tau}(s_k+\tau_0, t_0)|\geq \frac{1}{2}\epsilon_0$ for $k$ large enough.

However, by Theorem \ref{thm:subsequence}, we can take a subsequence of $s_k$ such that
$w(s_k+\tau, t)$ converges to $\gamma(T\, t)$ in a neighborhood of $(\tau_0, t_0)\in K\times S^1$,
in the $C^\infty$ sense. Here $\gamma$ is some Reeb chord from $R$ to $R'$ for the
relevant pair $(R,R')$ associated to the given puncture.
Then we get $\lim_{s\to \infty}|\pi \frac{\del w}{\del\tau}(s_k+\tau_0, t_0)|=0$ and get a contradiction.

Once we establish this uniform $C^1$-decay result (which implies $W^{1,2}$-decay),
the higher order decay result is
an immediate consequence of the uniform local pointwise higher order
a priori estimates on the strip, of our Theorem \ref{thm:local-regularity}, since we can
cover the strip by a covering pair $\{U_\alpha\}$ and $\{V_\alpha\}$ with uniform size.
This finishes the proof.
\end{proof}

\section{Exponential $C^\infty$ convergence}
\label{sec:exponential-convergence}

In this section, we assume Hypothesis \ref{hypo:nondegeneracy}. Under the hypothesis,
we will improve the subsequence convergence to the exponential $C^\infty$ convergence
under the transversality hypothesis. Suppose that the tuple $\vec R= (R_0, \ldots, R_k)$ are
transversal in the sense all pairwise Reeb chords are nondegenerate. In particular we assume
that the tuples are pairwise disjoint.

We closely follow the arguments used in \cite[Section 11]{oh-wang:CR-map2}
which treats the closed string case \emph{under the hypothesis} of
vanishing charge $Q = 0$. Since in the current open string case, this vanishing is
proved and so all the arguments used therein can be repeated verbatim after adapting
them to the presence
of the boundary condition in the arguments. So we will outline the main
arguments of the proofs referring the detailed arguments to \cite[Section 11]{oh-wang:CR-map2}
but indicate only the necessary changes needed.

\subsection{$L^2$-exponential decay of the Reeb component of $dw$}
\label{subsec:Reeb}
We will prove the exponential decay of the Reeb component $w^*\lambda$.
We focus on a punctured neighborhood around a puncture $z_i \in \del \Sigma$ equipped with
strip-like coordinates $(\tau,t) \in [0,\infty) \times [0,1]$.

We again consider a complex-valued function $\alpha$ given in \eqref{eq:alpha}.
Then by the Legendrian boundary condition, we know $\alpha(\tau, i) \in \R$, i.e.
$$
\text{\rm Im}\, \alpha = 0
$$
for $i =0, \, 1$.

The following lemma was proved in the closed string case in
\cite{oh-wang:CR-map2} (in the more general context of Morse-Bott case).
For readers' convenience, we
provide some details by indicating how we adapt the argument
with the presence of boundary condition.

\begin{lem}[Compare with Lemma 11.20 \cite{oh-wang:CR-map2}]\label{lem:exp-decay-lemma}
Suppose the complex-valued functions $\alpha$ and $\nu$ defined on $[0, \infty)\times [0,1]$
satisfy
\beastar
\begin{cases}
\delbar \alpha = \nu, \\
\alpha(\tau, i) \in \R \,   \text{\rm for } i = 0,\, 1, \\
\|\nu\|_{L^2([0,1])}+\left\|\nabla\nu\right\|_{L^2([0,1])}\leq Ce^{-\delta \tau}
\, & \text{\rm for some constants } C, \delta>0\\
\lim_{\tau\rightarrow +\infty}\alpha(\tau,t) = T
\end{cases}
\eeastar
then $\|\alpha -T\|_{L^2(S^1)}\leq \overline{C}e^{-\delta \tau}$
for some constant $\overline{C}$.
\end{lem}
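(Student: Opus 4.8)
The plan is to reduce the statement, by a Schwarz reflection across the totally real boundary lines $t=0$ and $t=1$, to a decoupled family of scalar first order ODEs, one per Fourier mode, and then estimate each mode. Put $\beta:=\alpha-T$ (note $T\in\R$, forced by $\alpha(\tau,0)\in\R$ and $\alpha(\tau,\cdot)\to T$), so that $\delbar\beta=\nu$, $\beta(\tau,i)\in\R$ for $i=0,1$, and $\beta(\tau,\cdot)\to 0$ as $\tau\to\infty$. Using that $\beta$ is real on $t=0$ and $t=1$, extend it to $\widetilde\beta$ on $[0,\infty)\times(\R/2\Z)$ by $\widetilde\beta(\tau,t)=\overline{\beta(\tau,-t)}$ for $t\in[-1,0]$ and periodicity $2$ in $t$; boundary reality makes $\widetilde\beta$ continuous (indeed Lipschitz) across the seams $t\in\Z$. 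A direct computation gives $\delbar\widetilde\beta=\widetilde\nu$, where $\widetilde\nu$ is the analogous reflection of $\nu$, i.e.\ $\widetilde\nu(\tau,t)=\overline{\nu(\tau,-t)}$ for $t\in[-1,0]$; no distributional contribution appears at the seams since $\widetilde\beta$ is continuous while its $t$-derivative jumps precisely in the way $\widetilde\nu$ does. One also has $\|\widetilde\nu(\tau,\cdot)\|_{L^2(\R/2\Z)}=\sqrt2\,\|\nu(\tau,\cdot)\|_{L^2([0,1])}\le\sqrt2\,Ce^{-\delta\tau}$, and the symmetry $\widetilde\beta(\tau,-t)=\overline{\widetilde\beta(\tau,t)}$ forces the Fourier coefficients of $\widetilde\beta(\tau,\cdot)$ in $t$ to be real (a convenience, not essential).

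Next I would expand in Fourier series in $t$: $\widetilde\beta(\tau,t)=\sum_{n\in\Z}b_n(\tau)e^{\pi i n t}$ and $\widetilde\nu(\tau,t)=\sum_{n\in\Z}\nu_n(\tau)e^{\pi i n t}$, with $|\nu_n(\tau)|\le\|\nu(\tau,\cdot)\|_{L^2([0,1])}\le Ce^{-\delta\tau}$ and $b_n(\tau)\to 0$ (from $\beta(\tau,\cdot)\to 0$). Inserting into $\delbar=\tfrac12(\del_\tau+i\del_t)$ decouples the equation into the scalar ODEs
$$
b_n'(\tau)-\pi n\,b_n(\tau)=2\nu_n(\tau),\qquad n\in\Z .
$$
For $n\ge 0$ the homogeneous solution $e^{\pi n\tau}$ is nondecaying, so $b_n\to 0$ selects $b_n(\tau)=-2\int_\tau^\infty e^{\pi n(\tau-s)}\nu_n(s)\,ds$, hence $|b_n(\tau)|\le\frac{2C}{\pi n+\delta}e^{-\delta\tau}$. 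For $n<0$, with $\mu:=\pi|n|\ge\pi$, the equation reads $b_n'+\mu b_n=2\nu_n$, so $b_n(\tau)=e^{-\mu\tau}b_n(0)+2\int_0^\tau e^{-\mu(\tau-s)}\nu_n(s)\,ds$ and, since $\mu\ge\pi\ge\delta$,
$$
|b_n(\tau)|\le e^{-\mu\tau}|b_n(0)|+\frac{2C}{\mu-\delta}\,e^{-\delta\tau}.
$$
Summing over $n$ by Parseval, using $e^{-\pi|n|\tau}\le e^{-\delta\tau}$ for $\tau\ge 0$ together with the convergence of $\sum_{n\ge 0}(\pi n+\delta)^{-2}$ and $\sum_{n\ge 1}(\pi n-\delta)^{-2}$, yields
$$
\|\beta(\tau,\cdot)\|_{L^2([0,1])}^2=\sum_{n\in\Z}|b_n(\tau)|^2\le\overline C^2 e^{-2\delta\tau},\qquad \overline C^2=(\text{const})\cdot C^2+2\,\|\alpha(0,\cdot)-T\|_{L^2([0,1])}^2 ,
$$
which, reading $S^1$ as the cross-section $[0,1]$ (or, up to the factor $\sqrt2$, as the doubled circle), is the claimed estimate.

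Two points need care. First, to control $b_n(0)$ one needs $\alpha(0,\cdot)\in L^2([0,1])$ so that $\sum_n|b_n(0)|^2=\tfrac12\|\widetilde\beta(0,\cdot)\|_{L^2(\R/2\Z)}^2<\infty$; since only large-$\tau$ behavior is asserted one may instead run the argument on $[\tau_0,\infty)\times[0,1]$ for any fixed $\tau_0>0$ where the standing regularity provides this bound, and absorb $[0,\tau_0]$ into $\overline C$. Second, the homogeneous part of the modes $n<0$ decays only at rate $\pi|n|\ge\pi$, so the argument literally gives rate $\min(\delta,\pi)$; this equals $\delta$ exactly when $\delta\le\pi$, $\pi$ being the spectral gap of $-i\del_t$ on the unit-width strip with this (doubled) boundary condition, which is the only regime relevant here since the decay of $\nu=\tfrac12|\zeta|^2$ is produced with $\delta$ below that gap — otherwise one simply replaces $\delta$ by any smaller value below $\pi$ in the conclusion. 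I expect the main (still routine) obstacle to be the reflection step: verifying that $\widetilde\beta$ solves $\delbar\widetilde\beta=\widetilde\nu$ weakly across the seams and justifying the term-by-term Fourier manipulations under the stated regularity, both handled by working in $W^{1,2}$ on $\tau$-slices and using the hypothesis $\|\nabla\nu\|_{L^2([0,1])}\le Ce^{-\delta\tau}$. An alternative avoiding Fourier series would be to first solve $\delbar\psi=\nu$ explicitly with $\|\psi(\tau,\cdot)\|_{L^2}\le C'e^{-\delta\tau}$ by a Cauchy-transform construction, reducing matters to the classical exponential decay of a bounded holomorphic function on the half-strip with real boundary values vanishing at $\tau=+\infty$.
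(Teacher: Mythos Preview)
Your argument is correct and complete, modulo the two caveats you already flag (finiteness of $\|\alpha(0,\cdot)\|_{L^2}$ and the restriction $\delta\le\pi$), both of which you handle appropriately. The reflection step is fine: since $\widetilde\beta$ is continuous across the seams it lies in $W^{1,2}_{\rm loc}$, so $\delbar\widetilde\beta$ is the piecewise-defined $\widetilde\nu$ in the weak sense with no singular contribution; your mode-by-mode ODE analysis and the Parseval summation go through as written.

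The paper, however, does not argue this way. It invokes the \emph{three-interval method} (citing \cite{oh-wang:CR-map2}), which is a convexity-type argument: one shows that the function $\tau\mapsto\|\alpha(\tau,\cdot)-T\|_{L^2}$ (or a suitable variant) satisfies a discrete differential inequality across consecutive unit $\tau$-intervals, forced by the ellipticity of $\delbar$ and the spectral gap of the asymptotic operator, and then iterates. That method is more robust---it works verbatim in Morse--Bott settings and for more general asymptotic operators where no explicit Fourier decomposition is available---but it is also more of a black box. Your Schwarz-reflection/Fourier approach is more elementary and entirely self-contained, and it makes the dependence of $\overline C$ on the data and the role of the spectral gap $\pi$ completely transparent; the trade-off is that it is tailored to this specific geometry (a flat strip with totally real boundary condition) and would not generalize as directly.
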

\begin{proof}
Notice that from previous section we have already
established the $W^{1, 2}$-exponential decay of $\nu = \frac{1}{2}|\zeta|^2$.
Once this is established, the proof of this $L^2$-exponential decay result
can be proved again by the standard three-interval method
and so omitted. See \cite[Theorem 10.11]{oh-wang:CR-map2} for a general abstract framework
which deals with the general Morse-Bott case. We also refer to
the Appendix of the arXiv version of \cite{oh-wang:CR-map1} for friendly details for the nondegenerate
case.
\end{proof}

\begin{rem} There is an error in the statement of \cite[Lemma 11.20]{oh-wang:CR-map2}
where
$$
\lim_{\tau\rightarrow +\infty}\alpha(\tau,t) = 0
$$
and the inequality $\|\alpha\|_{L^2(S^1)}\leq \overline{C}e^{-\delta \tau}$ appear:
This is an obvious error for which $\alpha$ should be replaced by $\alpha - T$ in
\cite[Lemma 11.20]{oh-wang:CR-map2}. (We thank a referee who pointed out this typo!)
Since the later proofs in \cite{oh-wang:CR-map2} are based on the correct statements,
we can apply them here too.
\end{rem}

\subsection{$C^0$ exponential convergence}

Now the $C^0$-exponential convergence of $w(\tau,\cdot)$ to some Reeb chord as $\tau \to \infty$
can be proved from the $L^2$-exponential estimates presented in previous sections by
the verbatim same argument as the proof of \cite[Proposition 11.21]{oh-wang:CR-map2}
with $S^1$ replaced by $[0,1]$ here. Therefore we omit its proof.

\begin{prop}[Compare with Proposition 11.21 \cite{oh-wang:CR-map2}]\label{prop:czero-convergence}
Under Hypothesis \ref{hypo:basic}, for any contact instanton $w$ satisfying the Legendrian boundary condition, there exists a unique Reeb orbit $\gamma$ such that the curve  $z(\cdot)=\gamma(T\cdot):[0,1] \to M$  satisfies
$$
\|d(w(\tau, \cdot), z(\cdot))
\|_{C^0([0,1])}\rightarrow 0,
$$
as $\tau\rightarrow +\infty$,
where $d$ denotes the distance on $M$ defined by the triad metric.
Here $T = T_\gamma $ is action of $\gamma$ given by  $T_\gamma =\int \gamma^*\lambda$.
\end{prop}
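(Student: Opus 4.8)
The plan is to follow the structure of \cite[Proposition 11.21]{oh-wang:CR-map2} while adapting each step to the Legendrian boundary condition and exploiting the vanishing $Q = 0$ already established in Theorem \ref{thm:subsequence}. First I would fix a positive puncture and work in the strip-like coordinates $(\tau,t) \in [0,\infty)\times[0,1]$, using the function $\alpha = \lambda(\del w/\del t) + \sqrt{-1}\,\lambda(\del w/\del\tau)$ from \eqref{eq:alpha}. By the Legendrian boundary condition we have $\operatorname{Im}\alpha(\tau,i) = 0$ for $i = 0,1$, and the identity $*d(w^*\lambda) = |\zeta|^2$ (Lemma \ref{lem:*dw*lambda}) together with the closedness $d(w^*\lambda\circ j) = 0$ puts $\alpha$ into the Riemann--Hilbert form $\delbar\alpha = \nu$, $\nu = \tfrac12|\zeta|^2$. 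Starting from the $W^{1,2}$-exponential decay of $\zeta = \pi\,\del w/\del x$ obtained in Section \ref{subsec:Reeb}, I would invoke Lemma \ref{lem:exp-decay-lemma} to get $\|\alpha - T\|_{L^2([0,1])} \leq \overline{C}e^{-\delta\tau}$; combining with the higher $C^{k,\delta}$-estimates of Theorem \ref{thm:local-regularity} applied on a uniform cover of the strip, this upgrades to $C^0$-exponential decay of $|d^\pi w|$ and of $|w^*\lambda - T\,dt|$.

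Next I would integrate the decay estimate along $\tau$ to obtain Cauchy-in-$\tau$ behavior of $w(\tau,\cdot)$. Concretely, since $\partial_\tau w = (\partial_\tau w)^\pi + \lambda(\partial_\tau w)R_\lambda$ with $|(\partial_\tau w)^\pi| \to 0$ exponentially and $\lambda(\partial_\tau w)\to 0$ exponentially, for $\tau_1 < \tau_2$ the triad-metric distance $d(w(\tau_1,t),w(\tau_2,t))$ is bounded by $\int_{\tau_1}^{\tau_2}|\partial_\tau w(\sigma,t)|\,d\sigma \leq C e^{-\delta\tau_1}$, uniformly in $t$. Hence $w(\tau,\cdot)$ is uniformly Cauchy in $C^0([0,1])$ as $\tau\to\infty$, so it converges uniformly to a limit curve $z:[0,1]\to M$. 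The subsequence analysis of Theorem \ref{thm:subsequence} identifies this limit with $\gamma(T\,\cdot)$ for a Reeb chord $\gamma$ joining the relevant Legendrian pair $(R,R')$, with $T = \int\gamma^*\lambda$; the limit curve lies in $R$ at $t=0$ and in $R'$ at $t=1$ because each $w(\tau,\cdot)$ does and the convergence is $C^0$. Uniqueness of $\gamma$ follows from the nondegeneracy Hypothesis \ref{hypo:nondegeneracy}, which makes Reeb chords between $R$ and $R'$ isolated, so a connected family of limits collapses to a single chord.

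The main obstacle is the passage from the $L^2$-exponential decay of $\alpha - T$ and $\zeta$ to the $C^0$-exponential decay of the full $dw$ with boundary present: this requires feeding the $L^2$-decay back into the coupled elliptic system \eqref{eq:equation-for-zeta0}--\eqref{eq:equation-for-alpha} and running the interior-and-boundary Schauder estimates of Theorem \ref{thm:local-regularity} on translates $w(\cdot + s,\cdot)$, tracking that the constants are uniform in $s$. The Legendrian (totally real / real-valued) boundary conditions are exactly of the type handled in Section \ref{sec:Ckdelta-estimates}, so the needed estimates are available; the remaining bookkeeping — that exponential decay of the data forces exponential decay of the solution norms on each unit sub-strip — is the standard three-interval argument already referenced in Lemma \ref{lem:exp-decay-lemma}. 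Apart from this, everything is the verbatim adaptation of \cite[Proposition 11.21]{oh-wang:CR-map2} with $S^1$ replaced by $[0,1]$, which is why the detailed computation can be omitted.
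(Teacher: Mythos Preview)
Your proposal is correct and follows essentially the same approach as the paper: the paper also omits the detailed proof, stating it is the verbatim adaptation of \cite[Proposition 11.21]{oh-wang:CR-map2} with $S^1$ replaced by $[0,1]$, and highlights as the key ingredient exactly the Cauchy-in-$\tau$ lemma $d(w(\tau,t),w(\tau',t)) < \epsilon$ for large $\tau,\tau'$, which you obtain by integrating the exponential decay of $\partial_\tau w$. The only minor difference is that you deduce uniqueness of $\gamma$ from the isolatedness of nondegenerate chords, whereas the paper's route gets uniqueness directly from the full-sequence Cauchy property; both are valid and amount to the same thing here.
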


We just mention that the proof is based on the following lemma whose proof we
refer readers to that of \cite[Lemma 11.22]{oh-wang:CR-map2}.
\begin{lem} Let $t \in [0,1]$ be given. Then
for any given $\epsilon > 0$, there exists sufficiently large $\tau_1 > 0$ such that
$$
d(w(\tau,t), w(\tau', t)) < \epsilon
$$
for all $\tau, \, \tau' \geq \tau_1$.
\end{lem}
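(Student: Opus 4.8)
The plan is to bound the distance by the length of the ``vertical'' path $s\mapsto w(s,t)$ and reduce the statement to a pointwise exponential decay of $\del w/\del\tau$. Fix $t\in[0,1]$. For $\tau'\ge\tau$ the curve $s\mapsto w(s,t)$, $s\in[\tau,\tau']$, joins $w(\tau,t)$ to $w(\tau',t)$ in $M$, so, with $d$ the distance of the triad metric,
$$
d(w(\tau,t),w(\tau',t))\ \le\ \int_\tau^{\tau'}\Big|\frac{\del w}{\del s}(s,t)\Big|\,ds .
$$
Writing $\zeta=\pi\,\del w/\del\tau$ and $f=\lambda(\del w/\del\tau)=\operatorname{Im}\alpha$, with $\alpha$ as in \eqref{eq:alpha}, the decomposition \eqref{eq:du} together with \eqref{eq:|dw|2} gives $|\del w/\del s|^2=|\zeta|^2+f^2$. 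Hence it suffices to prove the pointwise bounds
$$
|\zeta(s,t)|+|f(s,t)|\ \le\ C\,e^{-\delta' s}\qquad (s\ge 1,\ t\in[0,1]),
$$
with $C,\delta'>0$ independent of $(s,t)$; then $d(w(\tau,t),w(\tau',t))\le (C/\delta')\,e^{-\delta'\tau}$ for all $\tau'\ge\tau\ge1$, and one chooses $\tau_1$ with $(C/\delta')e^{-\delta'\tau_1}<\epsilon$.

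To obtain these pointwise bounds I would upgrade the $L^2$-exponential decay obtained in Subsection~\ref{subsec:Reeb} to $C^0$-exponential decay via elliptic estimates for the two coupled Cauchy--Riemann type systems \eqref{eq:equation-for-zeta0} and \eqref{eq:equation-for-alpha}. On the squares $Q_s=[s-1,s+1]\times[0,1]$, $s\ge1$, integrating over $\tau$-slices the $W^{1,2}$-exponential decay of $\nu=\tfrac12|\zeta|^2$ (recalled in Subsection~\ref{subsec:Reeb}) and the $L^2$-exponential decay $\|\alpha-T\|_{L^2([0,1])}(\tau)\le\bar C e^{-\delta\tau}$ of Lemma~\ref{lem:exp-decay-lemma} gives
$$
\|\zeta\|_{L^2(Q_s)}+\|\alpha-T\|_{L^2(Q_s)}\ \le\ C\,e^{-\delta_1 s}
$$
for some $\delta_1>0$. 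Now $\zeta$ satisfies the linear equation $\overline\nabla^\pi\zeta+P_{w^*\lambda}(\zeta)=0$ of \eqref{eq:equation-for-zeta0} with the Lagrangian (in particular totally real) boundary condition $\zeta(z)\in TR_i$ on $\del D_2$, whose zeroth-order coefficient $P_{w^*\lambda}$ is uniformly bounded by Hypothesis~\ref{hypo:basic} (and Corollary~\ref{cor:tangent-convergence}). The interior/boundary elliptic estimate for first-order CR systems with totally real boundary --- bootstrapping $L^2\rightarrow L^4\rightarrow W^{1,4}\hookrightarrow C^{0,\delta}$ with uniform constants --- then yields $\|\zeta\|_{C^0(Q_s')}\le C\|\zeta\|_{L^2(Q_s)}\le C e^{-\delta_1 s}$ on a slightly smaller square $Q_s'$. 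Feeding this into \eqref{eq:equation-for-alpha}, the function $\alpha-T$ solves $\delbar(\alpha-T)=\nu$ with the real boundary condition $\operatorname{Im}(\alpha-T)=0$ and with $\|\nu\|_{L^2(Q_s')}\le\tfrac12\|\zeta\|_{C^0(Q_s')}\|\zeta\|_{L^2(Q_s')}\le C e^{-2\delta_1 s}$; the Riemann--Hilbert estimate for equations $\delbar f+q(f)\del f=0$ (cf.\ \cite[Proposition~2.3.6]{sikorav:holo}) applied to $\alpha-T$ gives $\|\alpha-T\|_{C^0(Q_s'')}\le C(\|\nu\|_{L^2(Q_s')}+\|\alpha-T\|_{L^2(Q_s')})\le C e^{-\delta_1 s}$, whence $|f(s,t)|=|\operatorname{Im}(\alpha-T)(s,t)|\le C e^{-\delta_1 s}$. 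Taking $\delta'=\delta_1$ completes the reduction.

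The substantive point is precisely this $L^2\!\to\!C^0$ upgrade with the boundary present: the elliptic estimates must be applied to $\zeta$ and to $\alpha-T$, which satisfy genuinely linear equations with uniformly bounded data, and \emph{not} to $dw$ itself, whose $W^{1,2}$-norm over $Q_s$ does not decay since the Reeb component of $dw$ tends to $T\,dt$. Everything else --- the length-bound reduction, the handling of the two boundary components $\{t=0\}$ and $\{t=1\}$ separately, and the final geometric-series estimate --- is routine, and the argument parallels that of \cite[Lemma~11.22]{oh-wang:CR-map2} in the closed-string case, with $S^1$ there replaced by $[0,1]$.
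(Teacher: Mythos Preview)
Your proposal is correct and follows the strategy the paper itself points to: the paper does not give an independent proof but refers to \cite[Lemma~11.22]{oh-wang:CR-map2}, and your argument---bound the distance by the length of $s\mapsto w(s,t)$, then upgrade the $L^2$-exponential decay of $\zeta$ and $\alpha-T$ from Subsection~\ref{subsec:Reeb} to $C^0$-exponential decay via uniform elliptic estimates on translating squares $Q_s$---is precisely the natural adaptation of that closed-string argument with $S^1$ replaced by $[0,1]$. Your identification of the key subtlety, that the bootstrap must be applied to $\zeta$ and $\alpha-T$ (which satisfy linear equations with uniformly bounded coefficients and decaying $L^2$-norms) rather than to $dw$ itself, is exactly right.

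One remark: your proof actually yields the stronger exponential Cauchy estimate $d(w(\tau,t),w(\tau',t))\le Ce^{-\delta'\tau}$, which already subsumes the subsequent $C^0$-exponential convergence statement (the analogue of \cite[Proposition~11.23]{oh-wang:CR-map2}). The paper separates these into two steps---first Cauchy, then exponential---but your direct route is equally valid and slightly more economical.
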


Then the following $C^0$-exponential convergence is also proved.
\begin{prop}[Compare with Proposition 11.23 \cite{oh-wang:CR-map2}]
There exist some constants $C>0$, $\delta>0$ and $\tau_0$ large such that for any $\tau>\tau_0$,
\beastar
\|d\left( w(\tau, \cdot), z(\cdot) \right) \|_{C^0([0,1])} &\leq& C\, e^{-\delta \tau}
\eeastar
\end{prop}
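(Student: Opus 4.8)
The plan is to carry over the argument of \cite[Proposition 11.23]{oh-wang:CR-map2} from the closed string case (cylindrical end $[0,\infty)\times S^1$) to the present Legendrian boundary value problem (strip end $[0,\infty)\times[0,1]$), the only structural change being that the boundary terms produced by the integrations by parts are annihilated by the totally real boundary conditions $\zeta(\tau,i)\in TR_i$ and $\alpha(\tau,i)\in\R$ ($i=0,1$), which is exactly the content of the Legendrian condition. Throughout one works near one positive boundary puncture with strip-like coordinates $(\tau,t)\in[0,\infty)\times[0,1]$, writes $\zeta=\pi\,\frac{\del w}{\del\tau}$, lets $\alpha$ be the function \eqref{eq:alpha}, and uses Proposition \ref{prop:czero-convergence}, which already gives that $w(\tau,\cdot)$ converges uniformly as $\tau\to\infty$ to $z(\cdot)=\gamma(T\,\cdot)$ for the Reeb chord $\gamma$ attached to the puncture.

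The first step I would carry out is to record the $L^2([0,1])$-exponential decay of the part of $dw$ transverse to the chord, namely of $\pi\,\frac{\del w}{\del\tau}$, of $\pi\,\frac{\del w}{\del t}$, of $\lambda\!\left(\frac{\del w}{\del\tau}\right)$ and of $\lambda\!\left(\frac{\del w}{\del t}\right)-T$. Here $\pi\,\frac{\del w}{\del t}=J\zeta$ since $\delbar^\pi w=0$; the Reeb components are controlled by Lemma \ref{lem:exp-decay-lemma} applied to $\alpha$ (whose hypotheses hold by Corollary \ref{cor:tangent-convergence} and the $W^{1,2}$-exponential decay of $\nu=\frac12|\zeta|^2$ established in Subsection \ref{subsec:Reeb}), and the $W^{1,2}$-exponential decay of $\zeta$ itself follows from the three-interval method applied to the Cauchy--Riemann type equation \eqref{eq:equation-for-zeta0} with its totally real boundary condition, exactly as in \cite[Section 11]{oh-wang:CR-map2}.

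Next I would upgrade this to a pointwise bound. For $\tau$ large, $w$ maps the unit strip $S_\tau=[\tau,\tau+1]\times[0,1]$ into a fixed tubular neighborhood of the chord, in which the coupled system \eqref{eq:equation-for-zeta0}--\eqref{eq:equation-for-alpha} is a linear first order elliptic system for $(\zeta,\alpha-T)$ with totally real, respectively real, boundary conditions (the quadratic term $\nu=\frac12|\zeta|^2$ being absorbed as a small coefficient once $\|\zeta\|_{C^0}$ is small). Applying the translation-invariant local $W^{2,2}$ and $C^{k,\delta}$ estimates — the linearized versions of Theorems \ref{thm:local-W12} and \ref{thm:local-regularity}, together with the standard Riemann--Hilbert estimates of \cite{sikorav:holo} — on $S_\tau$ against a slightly larger strip, and invoking the embedding $W^{2,2}\hookrightarrow C^{0,\delta}$, the $L^2$ bounds of the first step yield $\|\zeta\|_{C^0(S_\tau)}+\|\alpha-T\|_{C^0(S_\tau)}\le C e^{-\delta\tau}$, hence $\left|\frac{\del w}{\del\tau}(\tau,t)\right|\le C e^{-\delta\tau}$ uniformly in $t\in[0,1]$. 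Finally, integrating in $\tau$: for fixed $t$ the path $\sigma\mapsto w(\sigma,t)$ starts at $w(\tau,t)$ and limits to $z(t)$ by Proposition \ref{prop:czero-convergence}, so $d(w(\tau,t),z(t))\le\int_\tau^\infty\left|\frac{\del w}{\del\tau}(\sigma,t)\right|d\sigma\le\frac{C}{\delta}e^{-\delta\tau}$, uniformly in $t$, which is the asserted estimate (with $\tau_0$ chosen so that the previous step applies for $\tau\ge\tau_0$).

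The analytically substantive ingredient, and the one I expect to demand real care rather than bookkeeping, is the $W^{1,2}$-exponential decay of $\zeta$ used in the first step: it rests on the invertibility of the asymptotic operator on $[0,1]$ with totally real boundary conditions, which is precisely where the nondegeneracy Hypothesis \ref{hypo:nondegeneracy} enters, and I would import it wholesale from \cite[Section 11]{oh-wang:CR-map2} (adapting $S^1$ to $[0,1]$) rather than reprove it. The only other thing to watch is that all elliptic constants in the second step are uniform in $\tau$; this is automatic because the equation is invariant under $\tau$-translation and the boundary conditions are $\tau$-independent, so once the first step is granted the remaining steps are routine.
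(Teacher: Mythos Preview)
Your proposal is correct and follows essentially the same route the paper intends: the paper itself omits the proof, pointing to \cite[Proposition 11.23]{oh-wang:CR-map2} and declaring that the argument carries over verbatim with $S^1$ replaced by $[0,1]$, which is exactly the adaptation you spell out (three-interval/asymptotic-operator decay for $\zeta$, Lemma \ref{lem:exp-decay-lemma} for $\alpha-T$, translation-invariant elliptic bootstrap to $C^0$, then integration in $\tau$). If anything, you have supplied more of the argument than the paper does.
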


\subsection{$C^\infty$-exponential decay of $dw - R_\lambda(w) \, d\tau$}
\label{subsec:Cinftydecaydu}

 So far, we have established the following:
\begin{itemize}
\item $W^{1,2}$-exponential decay of $w$,
\item $C^0$-exponential convergence of $w(\tau,\cdot) \to z(\cdot)$ as $\tau \to \infty$
for some Reeb chord $z$ between two Legendrians $R, \, R'$.
\end{itemize}

Now we are ready to complete the proof of
$C^\infty$-exponential convergence $w(\tau,\cdot) \to z$
by establishing the $C^\infty$-exponential decay of $dw - R_\lambda(w)\, dt$.
The proof of the latter decay is now in order which will be
carried out by the bootstrapping arguments
applied to the system \eqref{eq:contacton-Legendrian-bdy-intro}.

Combining the above three, we have obtained $L^2$-exponential estimates of the full derivative $dw$.
By the bootstrapping argument using the local uniform a priori estimates
on the strip-like region as in the proof of Lemma \ref{lem:exp-decay-lemma},
we obtain higher order $C^{k,\alpha}$-exponential decays of the term
$$
\frac{\del w}{\del t} - T R_\lambda(z), \quad \frac{\del w}{\del\tau}
$$
for all $k\geq 0$, where $w(\tau,\cdot)$ converges to $z$ as $\tau \to \infty$ in $C^0$ sense.
Combining this, Lemma \ref{lem:exp-decay-lemma} and elliptic $C^{k,\alpha}$-estimates
given in Theorem \ref{thm:local-regularity}, we complete the proof of $C^\infty$-convergence of
$w(\tau,\cdot) \to z$ as $\tau \to \infty$.

\part{Index formula}
\label{part:index}

The first named author established a
relevant  Fredholm theory for the bordered contact instantons in \cite{oh:contacton-transversality}.
 We  follow the off-shell analytical framework of \cite{oh:contacton-transversality} which is in turn an
 adaptation thereof from  \cite{oh:contacton}, \cite{oh-savelyev}  to the current
 boundary value problem \eqref{eq:contacton-Legendrian-bdy-intro}.

 In this Part, we compute the Fredholm index of the linearization operator of
 \eqref{eq:contacton-Legendrian-bdy-intro} in the context of construction of Fukaya-type category
 on contact manifolds whose objects are Legendrian submanifolds. With this in mind, we
 consider disk contact instantons $w$. We will compute the index of its linearized operator
 in terms of some topological index, called the
 \emph{polygonal Maslov index} of the type given in \cite{fooo:anchored}. For this purpose,
 we need to explain the precise geometric framework with respect to which
 the gradings of the Floer-type complex defined for the whole system of
 moduli spaces become compatible under the gluing of the moduli spaces
 developed similarly as in \cite{oh:contacton-gluing}. For this purpose, we need to
 introduce the notion of \emph{graded anchored Legendrian submanifolds}.

\section{Off-shell framework of the moduli spaces}
\label{sec:off-shell}

Let $(\Sigma, j)$ be a bordered compact Riemann surface, and let $\dot \Sigma$ be the
punctured Riemann surface of disk-type with $\{z_1,\ldots, z_k \} \subset \del \Sigma$, we consider the moduli space
$$
\CM_k((\dot \Sigma,\del \dot \Sigma),(M,\vec R)), \quad \vec R = (R_1,\cdots, R_k)
$$
 of finite energy maps $w: \dot \Sigma \to M$ satisfying the equation
\eqref{eq:contacton-Legendrian-bdy-intro}.
\begin{rem} The correct notion of energy to be used for the study of moduli
spaces of contact instantons are given in \cite{oh:contacton}, \cite{oh:entanglement1}
which is used in the bubbling analysis of the contact instanton moduli spaces.
However the discussion of energy is largely unnecessary for our purpose of the computation of
index since the index calculation is made for a given contact instanton $w$ under the prescribed asymptotic
convergence to suitable Reeb chords under the nondegeneracy hypothesis as described below.
\end{rem}

According to the choice of strip-like coordinates,
we partition $\{z_1,\ldots, z_k \}$ into the union
$$
\{p^+_1,\ldots,p^+_{s^+},p^-_1,\ldots,p^-_{s^-}\}
$$
with $k = s^+ + s^-$ such that the (boundary) punctured surface
$$
\dot \Sigma = D^2 \setminus \{p^+_1,\ldots,p^+_{s^+},p^-_1,\ldots,p^-_{s^-}\} \subset \C
$$
carries the strip-like ends on pairwise disjoint open subsets thereof
\beastar
& &U_i^+ = \phi_i^+([0,\infty)\times[0,1])\subset \dot \Sigma \quad {\rm for} \; i=1,\ldots,s^+ \\
& &U_j^- = \phi_j^-((-\infty,0]\times[0,1])\subset \dot \Sigma \quad {\rm for} \; j=1,\ldots,s^-
\eeastar
where $\phi_i^+$'s and $\phi_j^-$'s are bi-holomorphic maps from the corresponding semi-strips into $\C$ respectively.

Under the conditions given in Hypothesis \ref{hypo:nondegeneracy},
we have the decomposition of the finite energy moduli space
$$
\CM_k((\dot \Sigma,\del \dot \Sigma),(M,\vec R)) =
\bigcup_{\vec \gamma \in \prod_{i=1}^{k}\frak{Reeb}(R_i,R_{i+1})} \CM(\vec \gamma),
\quad \gamma = (\gamma_1, \ldots, \gamma_k)
$$
where $i$ is counted modulo $k$. (See \cite{oh:contacton} for the closed
string case which can be easily adapted to the current bordered case as in
\cite{oh:entanglement1}.)

Let $\gamma^+_i$ for $i =1, \cdots, s^+$ and $\gamma^-_j$ for $j = 1, \cdots, s^-$
be two given collections of Reeb chords at positive and negative punctures
respectively. Following the notations from \cite{behwz},
we denote by $\underline \gamma$ and $\overline \gamma$ the corresponding
collections
\beastar
\underline \gamma & = & \{\gamma_1^+,\cdots, \gamma^+_{s^+}\} \\
\overline \gamma & = & \{\gamma_1^-,\cdots, \gamma^-_{s^-}\}.
\eeastar
For each $p_i$ (resp. $q_j$), we associate the strip-like
coordinates $(\tau,t) \in [0,\infty) \times S^1$ (resp. $(\tau,t) \in (-\infty,0] \times S^1$)
on the neighborhoods of boundary punctures.

We consider the set of smooth maps satisfying the boundary condition
\be\label{eq:bdy-condition}
w(z) \in R_i \quad \text{ for } \, z \in \overline{z_{i-1}z_i} \subset \del \dot \Sigma
\ee
and the asymptotic condition
\be\label{eq:asymp-condition}
\lim_{\tau \to \infty}w((\tau,t)_i) = \gamma^+_i(T_i(t+t_i)), \qquad
\lim_{\tau \to - \infty}w((\tau,t)_j) = \gamma^-_j(T_j(t-t_j))
\ee
for some $t_i, \, t_j \in S^1$, where
$$
T_i = \int_{S^1} (\gamma^+_i)^*\lambda, \, T_j = \int_{S^1} ( \gamma^-_j)^*\lambda.
$$
Here $t_i,\, t_j$ depends on the given analytic coordinate and the parameterizations of
the Reeb chords.

We will fix a domain complex structure $j$ and its associated K\"ahler metric $h$.
We regard the assignment
$$
\Upsilon: w \mapsto \left(\delbar^\pi w, d(w^*\lambda \circ j)\right), \quad
\Upsilon: = (\Upsilon_1,\Upsilon_2)
$$
as a section of the (infinite dimensional) vector bundle: Using the decomposition
$TM = \xi \oplus \R\langle R_\lambda \rangle$, we write $\Upsilon = (\Upsilon_1, \Upsilon_2)$.
Then we can regard the assignment
$$
\Upsilon_1: w \mapsto \delbar^\pi w
$$
as a smooth section of some infinite dimensional vector bundle and
the assignment
$$
\Upsilon_2: w \mapsto d(w^*\lambda \circ j)
$$
as a map to $\Omega^2(\Sigma)$.
(We refer to \cite{oh:contacton-transversality} for the precise off-shell framework
for these operators.)

We consider the linearized operator
$$
D\Upsilon(w): \Omega^0(w^*TM,(\del w)^*T\vec R) \to \Omega^{(0,1)}(w^*\xi) \oplus \Omega^2(\Sigma)
$$
of each contact instanton $w$, i.e., $\Upsilon(w) = 0$.

Then the following explicit formula thereof is derived in \cite{oh:contacton} for the closed
string case which also holds with Legendrian boundary condition added.
(We alert readers that the notation $B$ used here is nothing to do with the $B$ used before
to denote the second fundamental form.)

\begin{thm}[Theorem 10.1 \cite{oh:contacton}; See also Theorem 1.15
\cite{oh-savelyev}] \label{thm:linearization} In terms of the decomposition
$d\pi = d^\pi w + w^*\lambda\, R_\lambda$
and $Y = Y^\pi + \lambda(Y) R_\lambda$, we have
\bea
D\Upsilon_1(w)(Y) & = & \delbar^{\nabla^\pi}Y^\pi + B^{(0,1)}(Y^\pi) +  T^{\pi,(0,1)}_{dw}(Y^\pi)\nonumber\\
&{}& \quad + \frac{1}{2}\lambda(Y) (\CL_{R_\lambda}J)J(\del^\pi w)
\label{eq:DUpsilon1}\\
D\Upsilon_2(w)(Y) & = &  - \Delta (\lambda(Y))\, dA + d((Y^\pi \rfloor d\lambda) \circ j)
\label{eq:DUpsilon2}
\eea
where $B^{(0,1)}$ and $T_{dw}^{\pi,(0,1)}$ are the $(0,1)$-components of $B$ and
$T_{dw}^\pi$, where $B, \, T_{dw}^\pi: \Omega^0(w^*TM) \to \Omega^1(w^*\xi)$ are
 zero-order differential operators given by
\be\label{eq:B}
B(Y) =
- \frac{1}{2}  w^*\lambda \otimes \left((\CL_{R_\lambda}J)J Y\right)
\ee
and
\be\label{eq:torsion-dw}
T_{dw}^\pi(Y) = \pi T(Y,dw)
\ee
respectively.
\end{thm}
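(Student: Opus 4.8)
The plan is to compute the two components of $D\Upsilon(w) = (D\Upsilon_1(w), D\Upsilon_2(w))$ directly, by differentiating $\Upsilon$ along a variation of $w$ and reading off the result; since the Legendrian boundary condition only constrains the source and target of the linearized operator (it enters solely through the subbundle $(\del w)^*T\vec R \subset w^*TM|_{\del\dot\Sigma}$ over which the domain $\Omega^0(w^*TM,(\del w)^*T\vec R)$ is modelled), the pointwise formulas \eqref{eq:DUpsilon1}--\eqref{eq:DUpsilon2} are literally those established in \cite[Theorem 10.1]{oh:contacton} in the closed string case. So the proof consists of recalling that interior derivation in the form adapted to the contact triad connection and observing that nothing in it sees the boundary. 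Concretely, I would fix a smooth family $s\mapsto w_s$ with $w_0 = w$ and $\frac{\del w_s}{\del s}\big|_{s=0} = Y$, pull back the contact triad connection $\nabla$ over $(s,z)\mapsto w_s(z)$, and denote by $\frac{D}{ds}$ the induced covariant derivative in the $s$-direction.

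For $\Upsilon_1$, I would start from $d^\pi w_s = \Pi\circ dw_s$ and compute
$$
\frac{D}{ds}\Big|_{s=0}(\Pi\circ dw_s) = (\nabla_Y\Pi)(dw) + \Pi\Big(\frac{D}{ds}\Big|_{s=0} dw_s\Big),
$$
where $\frac{D}{ds}dw_s = \nabla Y + T(Y,dw)$. Using $T(R_\lambda,\cdot)=0$ (property (2) of Theorem \ref{thm:connection}) and the splitting $Y = Y^\pi + \lambda(Y)R_\lambda$, the second summand contributes $\nabla^\pi Y^\pi$ as its leading part plus the torsion term $\pi T(Y^\pi, dw) = T^\pi_{dw}(Y^\pi)$ via \eqref{eq:torsion-dw}; the first summand, expanded through $\nabla_Y R_\lambda = \frac12(\CL_{R_\lambda}J)JY$ from Corollary \ref{cor:connection}, produces exactly the zero-order operator $B(Y)$ of \eqref{eq:B} together with the $\frac12\lambda(Y)(\CL_{R_\lambda}J)J(\del^\pi w)$ contribution coming from the $R_\lambda$-component of $Y$ paired with $d^\pi w$. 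Taking the $(0,1)$-part with respect to $(j,J)$ is legitimate precisely because $\nabla^\pi$ is Hermitian, so $\nabla^\pi J = 0$ (property (4)), which is what makes $\delbar^{\nabla^\pi} = \tfrac12(\nabla^\pi + J\nabla^\pi_{j(\cdot)})$ a genuine Cauchy--Riemann operator; this yields \eqref{eq:DUpsilon1}, with the torsion piece appearing as $T^{\pi,(0,1)}_{dw}(Y^\pi)$ and $\lambda(T|_\xi) = d\lambda$ used to identify it.

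For $\Upsilon_2 = d(w^*\lambda\circ j)$, I would use the (pullback) Cartan-type variation formula: extending $Y$ locally to a vector field $X$ on $M$ and taking $w_s$ to be the time-$s$ flow of $X$ composed with $w$, one gets
$$
\frac{d}{ds}\Big|_{s=0} w_s^*\lambda = d(\lambda(Y)) + \big[\,V\mapsto d\lambda(Y, dw(V))\,\big]
$$
as one-forms on $\dot\Sigma$. Since $R_\lambda\rfloor d\lambda = 0$ and $dw = d^\pi w + w^*\lambda\otimes R_\lambda$, the second term equals $(Y^\pi\rfloor d\lambda)$ evaluated on $d^\pi w$. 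Composing with $j$ and applying $d$, the first term becomes $d\big(d(\lambda(Y))\circ j\big) = -\Delta(\lambda(Y))\,dA$ for the K\"ahler metric $h$, and the second becomes $d\big((Y^\pi\rfloor d\lambda)\circ j\big)$, which is \eqref{eq:DUpsilon2}. Finally I would observe that every step above is pointwise/interior, so the same formulas hold verbatim on $\dot\Sigma$ with the Legendrian boundary arcs present.

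The main obstacle is the bookkeeping in the $\Upsilon_1$ computation: correctly tracking how $\nabla\Pi$ (equivalently $\nabla R_\lambda$) and the torsion of $\nabla$ interact with the pointwise decomposition $Y = Y^\pi + \lambda(Y)R_\lambda$ and with passing to the $(0,1)$-part, so that the output organizes exactly into $\delbar^{\nabla^\pi}Y^\pi + B^{(0,1)}(Y^\pi) + T^{\pi,(0,1)}_{dw}(Y^\pi)$ plus the single $\lambda(Y)$-term, rather than some connection-dependent reshuffling of these. This is exactly where the full strength of the defining properties of the contact triad connection in Theorem \ref{thm:connection} and Corollary \ref{cor:connection} is needed, and it is precisely the content of \cite[Theorem 10.1]{oh:contacton}; no new analytic difficulty is introduced by the Legendrian boundary condition, which merely restricts the function spaces on which $D\Upsilon(w)$ acts.
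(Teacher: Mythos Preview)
Your proposal is correct and, in fact, provides more than the paper does: the paper does not prove Theorem~\ref{thm:linearization} at all but simply cites it from \cite[Theorem~10.1]{oh:contacton} (and \cite[Theorem~1.15]{oh-savelyev}), remarking only that the formula ``derived in \cite{oh:contacton} for the closed string case \ldots also holds with Legendrian boundary condition added.'' Your sketch is precisely the computation carried out in those references, and your closing observation---that the derivation is pointwise and the Legendrian boundary condition enters only through the domain of the operator---is exactly the content of the paper's one-line justification for invoking the closed-string formula verbatim.
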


We  recall the Propostion \ref{prop:open-fredholm}
stated in Section \ref{subsec:index-formula} of the present paper
which states the Fredholm property of the linearized operator
for the bordered contact instantons with Legendrian boundary condition.
This will be the basis of our index computation in the next few sections.

Before launching on our computation,  we first
remind readers of the decomposition $Y = Y^\pi + \lambda(Y)\, R_\lambda$.
Noting that $Y^\pi$ and $\lambda(Y)$ are independent of each other, we will write
\be\label{eq:Y-decompose}
Y = Y^\pi + f R_\lambda, \quad f: = \lambda(Y)
\ee
where $f: \dot \Sigma \to \R$ is an arbitrary function satisfying the boundary condition
\be\label{eq:Y-bdy-condition}
Y^\pi(\del \dot \Sigma) \subset T\vec R, \quad f|_{\del \dot \Sigma} = 0
\ee
by the Legendrian boundary condition satisfied by $Y$.

This wraps up the discussion of the Fredholm property of
the linearization map
$$
D\Upsilon_{(\lambda,T)}(w): \Omega^0_{k,p}(w^*TM, (\del w)^*T\vec R;J;\underline \gamma,\overline \gamma) \to
\Omega^{(0,1)}_{k-1,p}(w^*\xi) \oplus \Omega^2_{k-2,p}(\dot \Sigma)
$$
which is borrowed from \cite{oh:contacton-transversality}.

In the following three sections, we shall compute the (Fredholm) index of the linearized operator $D\Upsilon(w)$ in terms of a topological index, called the polygonal Maslov
index. However in order to do this, we need more data. So we shall use the notion of
anchors as in \cite{fooo:anchored}.

\section{Anchored Legendrian links}
\label{sec:anchored}

\begin{defn}
Fix a base point $y$ of ambient contact manifold $(M,\xi)$. Let $R$ be a Legendrian submanifold of $(M,\xi)$. We define an
\emph{anchor} of $R$ to $y$ is a path $\ell:[0,1] \rightarrow M$ such that $\ell(0)=y,\;\ell(1)\in R$. We call a pair $(R,\ell)$ an \emph{anchored} Legendrian submanifold.
A chain $\mathcal{E} = ((R_1,\ell_1),\ldots,(R_k,\ell_k))$ is called an \emph{anchored Legendrian chain}.
\end{defn}

For given $\mathcal{E} = ((R_1,\ell_1),\ldots,(R_k,\ell_k))$, we define the set of chords
$$
\Omega(R_i,R_j) = \{\ell:[0,1] \rightarrow M \mid \ell(0)\in R_i,\;\ell(1)\in R_j\}
$$
for each pair $i, \, j$ with $i = j$ allowed.
Then these anchors give a systematic choice of a base path $\ell_{ij}\in\Omega(R_i,R_j)$ by concatenating $\ell_i$ and $\ell_j$ i.e.,
\be\label{eq:base-path}
\ell_{ij}(t) = \overline{\ell}_i\ast\ell_j(t) =
\left\{
\begin{array}{ll}
\ell_i(1-2t) & t\leq 1/2 \\
\ell_j(2t-1) & t\geq 1/2
\end{array} \right.
\ee
The upshot of this construction is the following overlapping property
\be\label{eq:overlapping}
\ell_{ij}(t)=\ell_{il}(t) \quad {\rm for} \; 0\leq t\leq \frac{1}{2},
\qquad \ell_{ij}(t) = \ell_{lj}(t) \quad {\rm for} \; \frac{1}{2} \leq t \leq 1
\ee
for all $i,j,l$.

Let $\vec{R} = (R_1,\ldots,R_k)$ be a Legendrian chain. We consider the set of homotopy class of maps $w:\dot \Sigma \rightarrow M$ satisfying \eqref{eq:bdy-condition}-\eqref{eq:asymp-condition}. We denote it by $\pi_2(\vec{R};\underline \gamma,\overline \gamma)$. If $\mathcal{E} = (\vec{R},\vec{\ell})$ is an anchored Legendrian chain then we write $\pi_2(\mathcal{E};\underline \gamma,\overline \gamma)$
in place of $\pi_2(\vec{R};\underline \gamma,\overline \gamma)$ by abuse of notation.

\begin{defn}\label{defn:admissible}
Let $\mathcal{E} = \{(R_i,\ell_i)\}_{1\leq i\leq k}$ be a chain of anchored Legendrian submanifolds. A homotopy class $B \in \pi_2(\mathcal{E};\underline \gamma,\overline \gamma)$ is called
\emph{admissible} to $\mathcal{E}$ if it can be obtained by a polygon that is a gluing of $k$ bounding strips $w_{(i-1)i}^+:[0,1]\times[0,1] \rightarrow M$ satisfying
\bea\label{eq:pos-admissible-piece}
& &w_{(i-1)i}^+(0,t)  = \gamma_i(t) \nonumber\\
& &w_{(i-1)i}^+(1,t) =  \ell_{(i-1)i}(t) \quad (= \overline{\ell}_{i-1}\ast\ell_i(t) )\\
& &w_{(i-1)i}^+(\tau,0)  \in  R_{i-1}, \quad w_{(i-1)i}^+(\tau,1)\in R_i\nonumber
\eea
\end{defn}

When this is the case, we denote the homotopy class $B$ as
$$
B = [w_{12}^+]\#\ldots\#[w_{k1}^+]
$$
and the set of admissible homotopy classes by $\pi^{ad}_2(\mathcal{E};\underline \gamma,\overline \gamma)$.
In order to define a polygonal index, we need an additional data as in \cite{fooo:anchored}
the explanation of which is now in order.

Let $y\in M$ be the base point given above.  We fix a Lagrangian subspace $V_y\in Lag(\xi_y)$.

\begin{defn}
Consider an anchored Legendrian $(R,\gamma)$. We denote by $\alpha$ a section of
$\gamma^*Lag(\xi,d\lambda)$ such that
$$
 \alpha(0)=V_y, \quad \alpha(1)=T_{\gamma(1)}R \subset \xi_{\gamma(1)}
 $$
We call such a pair $(\gamma,\alpha)$ a \emph{graded anchor} of $R$ (relative to $(y,V_y)$) and a triple $(R,\gamma,\alpha)$ a \emph{graded anchored Legendrian submanifold}.
\end{defn}

To proceed further, we recall the construction of Maslov index (see \cite{arnold:index} or \cite{fooo:anchored})
and introduce the notion of grading of anchors (see \cite{fooo:anchored}).

Let $(S,\omega)$ be a symplectic vector space. Then we denote by
$$Lag(S,\omega) = \{V \mid V \; {\rm is} \; {\rm a} \; {\rm Lagrangian} \; {\rm subspace} \; {\rm of} \; (S,\omega)\}$$
the Lagrangian Grassmannian of $(S,\omega)$ and let
$$
Lag_1(S,\omega) = \{V\in Lag(S,\omega) \mid \dim(V\cap V_0)\geq 1\}
$$
be the Maslov cycle whose
 whose Poincar\'e dual represents the Maslov class $\mu\in H^1(Lag(S,\omega),\Z)$ \cite{arnold:index}.

The tangent space $T_{V_0}Lag(S,\omega)$ is canonically isomorphic to the set of quadratic forms on $V_0$.

\begin{defn}\label{defn:positively-directed}
We say any tangent vector corresponding a nondegenerate positive-definite quadratic form is \emph{positively directed}.
\end{defn}

The following is a useful proposition for construction of the polygonal Maslov index. We refer to \cite{arnold:index} for its proof.

\begin{prop}\label{prop:directed-lag-path}
Let $(S,\omega)$ be a symplectic vector space and $V_1 \in (S,\omega)$ be a given Lagrangian subspace. Let $V_0 \in Lag(S,\omega)\setminus Lag_1(S,\omega;V_1)$ i.e., be a Lagrangian subspace with $V_0\cap V_1=\{0\}$. Consider smooth paths $\alpha:[0,1] \rightarrow Lag(S,\omega)$ satisfying
\begin{enumerate}
\item $\alpha(0)=V_0,\; \alpha(1)=V_1$.
\item $\alpha(t)\in Lag(S,\omega)\setminus Lag_1(S,\omega;V_1)$ for all $0\leq t<1$.
\item $-\alpha'(1)$ is positively directed.
\end{enumerate}
Then any two paths $\alpha_1,\alpha_2$ are homotopic to each other via a homotopy $\alpha_s$ satisfying the 3 conditions above for each $s$.
\end{prop}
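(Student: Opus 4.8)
The plan is to reformulate the statement as the path-connectedness of the space $\mathcal{P}$ of all paths $\alpha\colon[0,1]\to Lag(S,\omega)$ satisfying (1)--(3): the required homotopy $\alpha_s$ from $\alpha_1$ to $\alpha_2$ is then simply a path in $\mathcal{P}$. Two standard facts about the Lagrangian Grassmannian drive the argument. First, the open stratum $\Lambda^\circ:=Lag(S,\omega)\setminus Lag_1(S,\omega;V_1)$ of Lagrangians transverse to $V_1$ is an affine space, hence contractible (and in particular simply connected): choosing a Lagrangian complement $W$ of $V_1$ identifies $\Lambda^\circ$ with the vector space $\operatorname{Sym}(W)$ of symmetric bilinear forms on $W$. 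Second, for a Lagrangian complement $W_0$ of $V_1$ the graph chart identifies a neighborhood of $V_1$ in $Lag(S,\omega)$ with a neighborhood of $0$ in $\operatorname{Sym}(V_1)$, under which $V_1\leftrightarrow 0$, the Maslov cycle $Lag_1(S,\omega;V_1)$ corresponds to the degenerate forms, and the differential of the chart at $V_1$ is the canonical identification $T_{V_1}Lag(S,\omega)\cong\operatorname{Quad}(V_1)$. I would start by fixing once and for all a Lagrangian $W_0$ transverse to both $V_0$ and $V_1$ (possible by openness and density of each transversality condition), the resulting chart $\Phi$ near $V_1$, and a positive-definite reference form $P_0\in\operatorname{Sym}(V_1)$; I would also fix a reference path $\alpha_0\in\mathcal{P}$ that runs through $\Lambda^\circ$ from $V_0$ and equals $\Phi^{-1}((1-t)P_0)$ on a terminal interval.

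Step 1 (normal form at $t=1$): given $\alpha\in\mathcal{P}$, I would homotope it within $\mathcal{P}$ so that it agrees with the model germ $t\mapsto\Phi^{-1}((1-t)P_0)$ on some $[1-\delta,1]$. Writing $Q_t:=\Phi(\alpha(t))$ for $t$ near $1$, condition (3) says $\dot Q_1$ is negative-definite (since $-\alpha'(1)$, positively directed, equals $-\dot Q_1$ under $\Phi$), and $Q_1=0$, so $Q_t=(t-1)\dot Q_1+o(t-1)$ is positive-definite for $t$ slightly below $1$; since the signature is locally constant along the nondegenerate path $Q|_{[1-\delta_0,1)}$ (nondegeneracy being condition (2)), in fact $Q_t$ is positive-definite on all of $[1-\delta_0,1)$. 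The straight-line homotopy $Q^s_t:=(1-s)Q_t+s(1-t)P_0$ then has $Q^s_t$ positive-definite for $t<1$ and $\dot Q^s_1$ negative-definite for all $s$ (both ``positive-definite'' and ``negative-definite'' being convex conditions), so each $Q^s$ stays admissible; the only value moved is at $t=1-\delta_0$, and I would absorb that displacement by reparametrizing the initial portion of the path through $\Lambda^\circ$, which can be done continuously in $s$ because $\Lambda^\circ$ is contractible. This yields a homotopy in $\mathcal{P}$ from $\alpha$ to a path with the standard germ; after shrinking $\delta$ I may assume it coincides with $\alpha_0$ on $[1-\delta,1]$.

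Step 2 (freeing the interior): now $\alpha|_{[0,1-\delta]}$ and $\alpha_0|_{[0,1-\delta]}$ are two paths in $\Lambda^\circ$ with the same endpoints (the whole subinterval lies in $[0,1)$, so condition (2) keeps them in $\Lambda^\circ$). Since $\Lambda^\circ$ is simply connected they are homotopic rel endpoints inside $\Lambda^\circ$, and extending this homotopy by the common germ on $[1-\delta,1]$ gives a homotopy in $\mathcal{P}$ from $\alpha$ to $\alpha_0$. Hence $\mathcal{P}$ is path-connected, which is exactly the assertion.

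The main obstacle is Step 1: pinning down, up to homotopy, the germ of $\alpha$ at the endpoint where it meets the Maslov cycle. It is precisely the sign condition (3) that selects one of the two components of ``Maslov-regular terminal germs'', and the delicate bookkeeping is to run the straightening homotopy without ever violating (2) or (3) — which I would control by localizing all modifications in the explicit graph chart near $V_1$, where the relevant conditions are convex, and by pushing the induced boundary motion into the contractible stratum $\Lambda^\circ$.
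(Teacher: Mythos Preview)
The paper does not supply its own proof of this proposition; it simply refers the reader to Arnold \cite{arnold:index}. Your argument is a correct, self-contained proof: the two structural facts you invoke (that $\Lambda^\circ$ is affine via the graph chart over a complement of $V_1$, and that a graph chart based at $V_1$ identifies the Maslov cycle with the degenerate quadratic forms) are standard, and your two-step reduction---first straightening the terminal germ by a convex homotopy in $\operatorname{Sym}(V_1)$, then homotoping the remaining interior segment through the contractible stratum $\Lambda^\circ$---goes through as you describe. The only points worth polishing are routine: ensuring the concatenated homotopies are smooth at the junction $t=1-\delta_0$ (a standard collar/reparametrization fix), and making explicit that your affine ``absorption'' of the moving boundary point in Step~1 uses the global affine structure on $\Lambda^\circ$ rather than the local chart near $V_1$, so there is no issue with the initial segment leaving that chart.
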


\begin{rem}
Our polygon will have the boundary decorated by Legendrian submanifolds $R_1,\ldots,R_k$ with counter-clockwise order but in \cite{fooo:anchored} Lagrangian submanifolds $L_1,\ldots,L_k$ has the clockwise order. So for the consistency with that of
\cite{fooo:anchored}, we choose the opposite direction of $\alpha$ in the above proposition to that of path in \cite[Proposition 5.3]{fooo:anchored}. More explicitly, if we take $\overline{\alpha}(t) = \alpha(1-t)$ instead of $\alpha(t)$ then our proposition is exactly same as \cite[Proposition 5.3]{fooo:anchored}.
\end{rem}

Let $(R_0,\ell_0,\alpha_0)$ and $(R_1,\ell_1,\alpha_1)$ be a pair of
graded anchored Legendrian submanifolds relative to $(y,V_y)$ that satisfies
transversality stated in Hypothesis \ref{hypo:nondegeneracy} which is applied to the pair, i.e.,
$Z_{R_0} \pitchfork R_1$.

Then we can define the path $\alpha_{01}(t) \in Lag(\xi_{\ell_{01}(t)})$
by concatenating $\alpha_0$ and $\alpha_1$ i.e.,
$$
\alpha_{01} = \overline{\alpha}_0\ast\alpha_1.
$$

Now consider a Reeb chord $\gamma$ from $R_0$ to $R_1$ and a map $w:[0,1]^2 \rightarrow M$ satisfying the boundary condition
\bea\label{eq:admissible-piece}
& &w(\tau,0) \in  R_0, \; w(\tau,1)\in R_1, \label{eq:bdy-R0R1}\\
& &w(0,t) =  \ell_{01}(t), \; w(1,t)=\gamma(t). \label{eq:asymp-gamma}
\eea
Denote by $[w,\gamma]$ the homotopy class of such $w$'s.
Similarly we denote by $[\gamma,w]$ the homotopy class of such $w$'s satisfying
\eqref{eq:bdy-R0R1} and \eqref{eq:asymp-gamma} replaced by
$$
w(0,t)=\gamma(t), \quad w(1,t)=\ell_{01}(t)
$$
instead.

\begin{rem}\label{rem:[w,gamma]} By the exponential convergence from Section \ref{sec:exponential-convergence},
we can compactify the map contact instanton $w: [0,\infty) \times [0,1] \to M$
satisfying
$$
w(0,t)=\ell_{01}(t), \; w(\tau,0)\in R_0, \; w(\tau,1)\in R_1 \; \lim_{\tau\rightarrow\infty}w(\tau,t) = \gamma(t)
$$
to a continuous map from $\overline w: [0,\infty) \cup \{\infty\} \times [0,1] \cong [0,1]^2$. This latter map
defines a  relative homotopy class $[w,\gamma]$ as that of a map $\overline w$
where the condition at $t =1$ for \eqref{eq:admissible-piece} is replaced by the
asymptotic condition at $\tau =\infty$. We denote by
$$
[w,\gamma]
$$
the homotopy class of this map $\overline w$. The analytical index of the contact instanton $w$ will
be expressed in terms of some topological index of this compactified map $\overline w$.
(See \cite{fooo:anchored} for a similar practice.)
\end{rem}

Now we are ready to assign an integer grading to each Reeb chord $\gamma \in \frak{Reeb}(R_0,R_1)$
when $R_i$'are graded anchored.

Recalling the decomposition \eqref{eq:Y-decompose} of $Y$ arising from $TM = \xi \oplus \R \langle R_\lambda \rangle$,
we can choose a trivialization $\Phi:w^*TM \rightarrow [0,1]^2\times\R^{2n+1}$
by taking the diagonal form of the trivialization as follows.

\begin{defn}[Diagonal trivialization $\Phi$]\label{defn:Phi} $\Phi$ satisfies
\begin{itemize}
\item it respects the splitting $w^*TM = w^*\xi \oplus \R\langle R_\lambda \rangle$ and
$$
\R^{2n+1} \cong \C^n \oplus \R,
$$
\item  $\Phi|_{w^*\xi}:w^*\xi \rightarrow [0,1]\times\R^{2n}\times\{0\} \cong [0,1]^2\times\R^{2n}$ is a symplectic trivialization.
\end{itemize}
\end{defn}

Then we can associate a Lagrangian path $\alpha^\Phi_{[w,\gamma];\alpha_{01}}$ defined on $\partial[0,1]^2\setminus \{(1,1)\}$ by
\be\label{eq:lag-loop-piece}
\begin{cases}
\alpha^\Phi_{[w,\gamma];\alpha_{01}}(\tau,i)  =  \Phi(T_{w(\tau,i)}R_i) \qquad {\rm for}
\; i=0,1 \nonumber \\
\alpha^\Phi_{[w,\gamma];\alpha_{01}}(0,t)  =  \Phi(\alpha_{01}(t)), \\
 \alpha^\Phi_{[w,\gamma];\alpha_{01}}(1,t)=\Phi(d\phi^{T_0t}_{R_\lambda}(T_{\gamma(0)}R_0)).
\end{cases}
\ee
Here $T_0$ is defined by $\gamma(t)=\phi^{T_0t}_{R_\lambda}(\gamma(0))$. Note that it is not necessarily nonnegative. If $T_0 < 0$, then $\gamma(t)$ is opposite with respect to the Reeb vector field. Note that
$$
\lim_{t\rightarrow 1}\alpha^\Phi_{[w,\gamma];\alpha_{01}}(1,t)=\Phi(d\phi^{T_0}_{R_\lambda}(T_{\gamma(0)}R_0))$$
and
$$
\lim_{\tau\rightarrow 1}\alpha^\Phi_{[w,\gamma];\alpha_{01}}(\tau,1)=\Phi(T_{w(\tau,1)}R_1)
$$
 are transversal to each other by the nondegeneracy hypothesis given in
 Hypothesis \ref{hypo:nondegeneracy}.
Then we have a Lagrangian path $\alpha^-$ from $\Phi(d\phi^{T_0}_{R_\lambda}(T_{\gamma(0)}R_0))$ to $\Phi(T_{w(\tau,1)}R_1)$ satisfying the conditions in Proposition \ref{prop:directed-lag-path}. Concatenating these two paths, we have a Lagrangian loop $\widetilde{\alpha}^\Phi_{[w,\gamma];\alpha_{01}}$ in $\R^{2n}$.

\begin{defn}[Grading of chords]
We define $\mu([w,\gamma];\alpha_{01})$ to be the Maslov index of this Lagrangian loop $\widetilde{\alpha}^\Phi_{[w,\gamma];\alpha_{01}}$ in $(\R^{2n},\omega_0)$.
\end{defn}
Note that this index is independent of the choice of trivializations $\Phi$
and depends only on the admissible homotopy classes.
In general, this defines a $\Z/\Gamma$ grading where $\Gamma = \Gamma_{R_0R_1}$ is the positive generator of the
abelian group
\be\label{eq:Gamma}
\Gamma_{R_0R_1}: = \{ \mu(A) \in \Z \mid A \in \pi_1(\Omega(R_0,R_1))\}
\ee
where $\mu(A)$ is the Maslov index of the annulus map $u: S^1 \times [0,1] \to M$
satisfying the boundary condition
$$
u(\theta,0) \in R_0, \, u(\theta,1) \in R_1 \quad \theta \in S^1.
$$

\section{Polygonal Maslov index}
\label{sec:polygonal}

Let $\dot \Sigma = D^2 \setminus \{z_1,\ldots,z_k\}$, $\vec{R} = (R_1,\ldots,R_k)$, $(\underline \gamma,\overline \gamma)$ and $\mathcal{F}((\dot{\Sigma},\del \dot{\Sigma}), \mathcal{E}; J; \underline \gamma,\overline \gamma)$ as above.

In this section, we define a polygonal Maslov index $\mu(\mathcal{E},\underline \gamma,\overline \gamma;B)$ associated to each homotopy class $B\in\pi_2(\mathcal{E};\underline \gamma,\overline \gamma)$. (See Remark \ref{rem:[w,gamma]}.)
We consider maps $w\in \mathcal{F}((\dot{\Sigma},\del \dot{\Sigma}), \mathcal{E}; J; \underline \gamma,\overline \gamma)$ with $[w]=B\in\pi_2(\mathcal{E};\underline \gamma,\overline \gamma)$. We again choose a diagonal trivialization $\Phi:w^*TM \rightarrow \dot \Sigma\times\R^{2n+1}$ as in Definition \ref{defn:Phi}.
Under the symplectic trivialization $\Phi|_{w^*\xi}$, we have $k$ smooth Lagrangian paths
$$
\alpha_i:\overline{z_{i-1}z_i}\subset \partial\dot \Sigma \rightarrow Lag(\R^{2n},\omega_0);
\qquad \alpha_i(t) = \Phi(T_{w(t)}R_i) =: \Lambda_i(t)
$$
for $i=1,\ldots,k$ and $k$ smooth Lagrangian paths
$$ \beta_i:[0,1] \rightarrow Lag(\R^{2n},\omega_0); \qquad \beta_i(t) = \Phi(T\psi^{T_it}_{R_\lambda}(T_{\gamma_i(0)}R_i))$$
Here $z_i = p_{i+1}^+$ for $i=1,\ldots,s^+$ and $z_j = p_{j-s^+}^-$ for $j=s^++1,\ldots,k$ which are punctures of $D^2$, and $\overline{z_{i-1}z_i}$ is an open arc from $z_{i-1}$ to $z_i$ and $T_i$ is defined by $\gamma_i(t) = \psi^{T_it}_{R_\lambda}(\gamma_i(0))$.

In order to construct a Lagrangian loop we need to construct more $k$ Lagrangian paths. By Hypothesis \ref{hypo:nondegeneracy},
$$ \beta_{i-1}(1) \pitchfork \alpha_i(z_{i-1}):= \lim_{t\rightarrow z_{i-1}+0}\alpha_i(t)$$
in $\R^{2n}$. Now we have $\overline{\alpha}:[0,1] \rightarrow Lag(\R^{2n},\omega_0)$ satisfying the conditions in Proposition \ref{prop:directed-lag-path} for $i=1,\ldots,k$ so that $\overline{\alpha}(0)=\beta_{i-1}(1)$ and $\overline{\alpha}(1)=\alpha_i(z_{i-1})$.

Now we have a Lagrangian path
$$\widetilde{\alpha}_w = \overline{\alpha}_1\ast\alpha_1\ast\beta_1\ast\cdots\ast
\overline{\alpha}_k\ast\alpha_k\ast\beta_k$$
in $Lag(\R^{2n},\omega_0)$.

\begin{defn}
We define the polygonal index $\mu(\mathcal{E},\underline \gamma,\overline \gamma;B)$ by the Maslov index of the Lagrangian loop $\widetilde{\alpha}_w$ i.e.,
$$\mu(\mathcal{E},\underline \gamma,\overline \gamma;B) = \mu(\widetilde{\alpha}_w).$$
\end{defn}

Note that the definition is independent of the choice of the trivialization $\Phi$ and that we did not use the anchors yet in the construction of the index. Assume that $\mathcal{E} = \{(R_i,\ell_i,\alpha_i)\}_{1\leq i\leq k}$ is a chain of graded anchored Legendrian submanifolds. It induces a grading $\alpha_{ij} = \overline{\alpha}_i\ast\alpha_j$ along $\ell_{ij} = \overline{\ell}_i\ast\ell_j$ for $i,j$. Then $\alpha_{ij}$ also satisfy the overlapping property
\be\label{eq:graded-overlapping}
\alpha_{ij}|_{[0,1/2]}=\alpha_{il}|_{[0,1/2]} \qquad \alpha_{ij}|_{[1/2,1]}=\alpha_{lj}|_{[1/2,1]}
\ee
for all $i,j$.

Recall that $B\in \pi^{ad}_2(\mathcal{E},\underline \gamma,\overline \gamma)$ i.e.,
$$
B = [w_{12}^+]\#\ldots\#[w_{k1}^+]
$$
where $w_{(i-1)i}^+$ are defined in Definition \ref{defn:admissible}.
By definition of $[w_{(i-1)i}^+,\gamma_i]$ and $\alpha_{ij}$, we can define the Maslov indices $\mu([w_{(i-1)i}^+,\gamma_i];\alpha_{(i-1)i})$.

Now we have the following proposition whose proof is similar to \cite[Lemma 5.12]{fooo:anchored}
and so omitted.

\begin{prop}\label{prop:decomp-index}
Let $\mathcal{E}$ be a graded anchored Legendrian chain. Suppose $B\in\pi^{ad}_2(\mathcal{E},\underline \gamma,\overline \gamma)$. Then we have
$$ \mu(\mathcal{E},\underline \gamma,\overline \gamma;B) = \sum_{i=1}^{k}\mu([w_{(i-1)i}^+,\gamma_i];\alpha_{(i-1)i}). $$
\end{prop}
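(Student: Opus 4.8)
The plan is to follow the gluing-of-Maslov-indices scheme of \cite[Lemma 5.12]{fooo:anchored}, transported to the present Reeb-chord setting, with the defining relation $B = [w_{12}^+]\#\cdots\#[w_{k1}^+]$ of an admissible class supplying the decomposition of the polygon into pieces. I would begin by fixing a representative $w$ of $B$ obtained by gluing the $k$ bounding strips $w_{(i-1)i}^+\colon[0,1]^2\to M$ of Definition \ref{defn:admissible} along the base paths $\ell_{(i-1)i}$, together with a single diagonal trivialization $\Phi\colon w^*TM\to\dot\Sigma\times\R^{2n+1}$ as in Definition \ref{defn:Phi}. Restricting $\Phi$ to the $i$-th piece gives, via \eqref{eq:lag-loop-piece} together with the corner-filling directed path furnished by Proposition \ref{prop:directed-lag-path}, exactly the Lagrangian loop $\widetilde\alpha^\Phi_{[w_{(i-1)i}^+,\gamma_i];\alpha_{(i-1)i}}$ whose Maslov index is $\mu([w_{(i-1)i}^+,\gamma_i];\alpha_{(i-1)i})$. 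The only point worth flagging here is that one \emph{global} trivialization is restricted to all pieces simultaneously; since the Maslov index of a Lagrangian loop is independent of the trivialization, this costs nothing, and it is what makes the $k$ loops directly comparable.

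Next I would assemble the $k$ loops into one closed path by concatenating them in the cyclic order dictated by the gluing. Along each internal seam --- the image under $w$ of a base path $\ell_{(i-1)i}$, shared by two adjacent squares --- the boundary Lagrangian arc contributed by one square is $\Phi(\alpha_{(i-1)i}(\cdot))$ on one half and that contributed by the neighbour is the same arc run backwards; this is precisely what the overlapping relations \eqref{eq:overlapping} for the anchors and \eqref{eq:graded-overlapping} for the graded anchors assert. Hence in the total concatenated loop these arcs are mutually inverse and can be excised without altering the Maslov index. Reading off what survives the cancellation, arc by arc, one is left with exactly the $R_i$-boundary paths $\alpha_i$ along $\overline{z_{i-1}z_i}$, the Reeb-linearization paths $\beta_i$ at the chords $\gamma_i$, and at each transverse corner a directed Lagrangian path from $\beta_{i-1}(1)$ to $\alpha_i(z_{i-1})$ of the kind produced by Proposition \ref{prop:directed-lag-path}; up to the homotopy guaranteed by that proposition these corner paths agree with the $\overline\alpha_i$, and the surviving loop is the polygonal loop $\widetilde\alpha_w = \overline\alpha_1\ast\alpha_1\ast\beta_1\ast\cdots\ast\overline\alpha_k\ast\alpha_k\ast\beta_k$ defining $\mu(\mathcal E,\underline\gamma,\overline\gamma;B)$.

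Finally, additivity of the Maslov index under concatenation of Lagrangian loops (equivalently, the fact that $\mu$ is the isomorphism $\pi_1(Lag(\R^{2n},\omega_0))\cong\Z$) gives
\[
\mu(\mathcal E,\underline\gamma,\overline\gamma;B)=\mu(\widetilde\alpha_w)=\sum_{i=1}^{k}\mu([w_{(i-1)i}^+,\gamma_i];\alpha_{(i-1)i}),
\]
since excising the cancelling seam arcs and homotoping the corner-fillings leaves the homotopy class of the total loop unchanged. The main obstacle --- and really the only substantive point beyond bookkeeping --- is pinning down the orientation conventions so that the seam arcs genuinely cancel and so that the corner-filling directed paths on the two sides of each puncture become homotopic; this is exactly where the overlapping relations \eqref{eq:overlapping}, \eqref{eq:graded-overlapping} and the homotopy-uniqueness of Proposition \ref{prop:directed-lag-path} enter, and it proceeds as in \cite[Lemma 5.12]{fooo:anchored} with only notational changes, the $\R\langle R_\lambda\rangle$-summand of the diagonal trivialization being one-dimensional and hence contributing nothing to any Maslov index.
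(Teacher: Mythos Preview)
Your proposal is correct and takes essentially the same approach the paper indicates: the paper omits the proof entirely, stating only that it is ``similar to \cite[Lemma 5.12]{fooo:anchored},'' and your argument carries out precisely that gluing-of-Maslov-indices scheme with the appropriate adaptations to the Reeb-chord setting. You have in fact supplied the details the paper leaves out, correctly identifying the overlapping relations \eqref{eq:overlapping}, \eqref{eq:graded-overlapping} and Proposition \ref{prop:directed-lag-path} as the key ingredients for the seam cancellation and corner-filling compatibility.
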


\section{Calculation of Fredholm index }
\label{sec:index}

Let $\mathcal{E} = \{(R_i,\ell_i,\alpha_i)\}_{1 \leq i \leq k}$ be a graded anchored Legendrian chain. Recall the operator
$$\Upsilon: \CW^{k,p} \to \CH^{(0,1)}_{k-1,p} (\xi;\vec R) $$
defined in Section \ref{sec:off-shell}. From now on we restrict $\Upsilon$ to
$$
\CW^{k,p} := \CW^{k,p}_{ad}((\dot{\Sigma},\del \dot{\Sigma}), \mathcal{E}; \underline \gamma, \overline \gamma)
$$
which consists of elements $w$ with $[w]=B$ admissible.

Recall that the linearization $D\Upsilon(w)$ of $\Upsilon$ is homotopic to the diagonal operator in \eqref{eq:diagonal}
with Legendrian boundary condition.
Denote this operator by $L$. Note that this homotopy preserves the index. Therefore we have
$$
{\rm Index }D\Upsilon(w) = {\rm Index } L = {\rm Index }(\bar{\partial}^{\nabla^\pi}+B^{(0,1)}
+T^{w,(0,1)}_{dw}) + {\rm Index }(-\Delta)
$$
Hence we suffices to compute the index of the operator $L$. To distinguish notations, we denote it by $L_w$ In order to compute this, we shall use the gluing formula. More explicitly, consider the operator $L$ on the half-infinite strip
$$
Z^+ := (-\infty,0]\times[0,1] \quad \text{for $ i=1,\ldots,s^+$}
$$
(resp. $Z^- := [0,\infty)\times[0,1]$ for $i=s^++1,\ldots,k$) with boundary conditions:

\bea\label{eq:half-strip-leg-bdry}
Y(0,t)\in \alpha_{(i-1)i}(t), \; Y(\tau,0)\in TR_{i-1}, \; Y(\tau,1)\in TR_i
\eea

We denote by  $L_{w_{(i-1)i}^+}$ on $Z^\pm$. Then we can glue all operators and we obtain an operator
$$
L_{\rm glued} := L_w \# \sum_{i=1}^{k}L_{w_{(i-1)i}^+}
$$
satisfying (linear) Legendrian boundary condition,
whose domain is isomorphic to a closed disk.
By the gluing formula of the Fredholm index, we obtain
$$
{\rm Index } L_w = {\rm Index }L_{\rm glued} - \sum_{i=1}^{k}{\rm Index }L_{w_{(i-1)i}^+}.
$$
We now compute the indices of each summand of the right hand side.
Before proceeding further, we remark the following
standard fact:
Although we have not explicitly mentioned, we need to take a certain $W^{k,p}$
completion with $k \geq 2$ for the study of index of the linearized operator.
Therefore we need to put the condition
\be\label{eq:finiteW2pnorm}
\|Y\|_{W^{2,p}} < \infty
\ee
(See \cite{oh:contacton-transversality} for the precise off-shell framework of
the Fredhlom theory of the linearized operator.)

\subsection{Computation for the glued operator}

We first compute the index of the glued operator. Note that the topological type of
the glued domain is a closed disk, which we denote by $D^2$ below.

Using the coordinate invariance of the principal symbol of the differential operator
that by construction and using the decomposition \eqref{eq:Y-decompose}, we know that
the operator can be homotoped to the direct sum of
\begin{itemize}
\item a second order differential operator whose symbol is
the same as the Laplacian $-\Delta$ on $D^2$ with Dirichlet boundary condition acted upon
$f = \lambda(Y)$. We denote it by $D_2$.
\item a linearized Cauchy-Riemann type operator  on $D^2$ with Lagrangian boundary condition
acted upon $Y^\pi$ which is determined by a certain Lagrangian loop.
In fact it is easy to see that this loop is homotopic
to a constant loop by admissibility hypothesis of $[w]$.
We denote by the resulting operator by $D_1$.
\end{itemize}
(We refer readers \cite[Appendix]{oh:cag} and \cite{fooo:anchored} for similar practice
for the Cauchy-Riemann. type operators on $Z^-$ in symplectic geometry.)

Therefore we obtain
$$
{\rm Index }L_{\rm glued} = {\rm Index }(L_2) + {\rm Index }(L_1) = 0 + n = n:
$$
Here we use the standard facts that
that principal symbol determines the ellipticity of linear differential operators
and that the Fredholm index is preserved under a continuous family of
Fredholm operators. For the computation $ {\rm Index } L_2$, we note that $L_2$
can be deformed to the Dirichelet Laplacian $-\Delta_0$ as an elliptic family
on the disc. Therefore it has index 0. For the operator $L_1$ on $D^2$,
we can again deform it to  the Cauchy-Riemann operator $\delbar$
(by deforming away its zero order part ) for the contractible Lagrangian loop as its boundary
condition. This implies that the operator has index $n$ by the standard
index calculation for the Riemann-Hilbert operator. (See \cite{oh:kmj}, for example.)
(This computation is just in the realm of the index theory of elliptic boundary
value problem compact manifold with boundary from \cite{atiyah-bott}. In our case the domain
is just $D^2$.)

\subsection{Computation of index on the semi-strips}

Now it remains to compute the index of ${\rm Index }L_{w_{(i-1)i}^+}$'s each of which
has the form $L_{[w,\gamma]}$ defined on $Z^-$ with Legendrian boundary conditions
$$
 Y(0,t)\in \alpha_{01}(t), \; Y(\tau,0)\in TR_0, \; Y(\tau,1)\in TR_1
$$
and \eqref{eq:finiteW2pnorm}. By the homotopy invariance over the homotopy
\eqref{eq:s-homotopy}, we have
\beastar
{\rm Index }L_{[w,\gamma]} & = & {\rm Index } \left(\begin{matrix}\bar{\partial}^{\nabla^\pi}+B^{(0,1)}+T^{w,(0,1)}_{dw} & 0 \\
0 & -\Delta_0 \end{matrix}\right)\\
& = & {\rm Index }(\bar{\partial}^{\nabla^\pi}+B^{(0,1)}+T^{w,(0,1)}_{dw}) + {\rm Index }(-\Delta_0).
\eeastar
(On $Z^+$, we can do the change of variables $(\tau,t) \mapsto (-\tau,t)$
to the operator on $Z^-$ and so it is enough to consider the case of $Z^-$.)

\subsubsection{Contribution of the Reeb component}

We compute ${\rm Index }(-\Delta_0)$ first. By the (linear) elliptic regularity, it will be enough
take the $W^{2,p}$-completion with $p > 2$ as its off-shell function for the operator $\Delta_0$,
whose kernel will be solutions of the Dirichlet problem of the form:
\be\label{eq:ker(-Delta0)}
\begin{cases}
\Delta f=0, \quad \|f\|_{W^{2,p}} < \infty,\\
f(\tau,0)=f(\tau,1)=f(0,t)=0,
\end{cases}
\ee
where $f:Z^-\subset \C \rightarrow \R$.
\begin{lem} $\Index (-\Delta_0) = 0$.
\end{lem}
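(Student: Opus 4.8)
The plan is to prove the two-sided vanishing $\dim\ker(-\Delta_0) = 0 = \dim\coker(-\Delta_0)$, from which $\Index(-\Delta_0) = 0$ follows at once.

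For the kernel, I would argue by separation of variables. Any $f\in\ker(-\Delta_0)$ solves \eqref{eq:ker(-Delta0)}: it is harmonic on $Z^- = [0,\infty)\times[0,1]$, vanishes on the three boundary segments, and has finite $W^{2,p}$-norm with $p>2$. Using the Dirichlet conditions at $t=0$ and $t=1$, expand $f(\tau,t) = \sum_{n\geq 1}a_n(\tau)\sin(n\pi t)$; harmonicity forces $a_n(\tau) = b_n e^{n\pi\tau}+c_n e^{-n\pi\tau}$. Reflecting $f$ oddly across $\{t=0\}$ and $\{t=1\}$ so that every point with $\tau\geq 1$ becomes interior, the finiteness of $\|f\|_{L^p}$ and the interior mean value inequality give $\sup_t|f(\tau,t)|\to 0$ as $\tau\to\infty$; hence $b_n = 0$ for all $n$. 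The remaining Dirichlet condition $f(0,t)=0$ then forces $\sum_n c_n\sin(n\pi t)\equiv 0$, so $c_n=0$ and $f\equiv 0$.

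For the cokernel, I would use that $-\Delta_0$ is formally self-adjoint and that the conditions in \eqref{eq:ker(-Delta0)} are Dirichlet on all of $\partial Z^-$, hence self-adjoint boundary conditions; thus the Banach-space adjoint of the $L^p$-realization of $-\Delta_0$ is the $L^q$-realization ($1/p+1/q=1$) of the same Dirichlet Laplacian. Concretely, a functional in $\coker(-\Delta_0)$ is represented by some $g\in L^q(Z^-)$ with $\int_{Z^-}g\,\Delta f = 0$ for every admissible $f$; testing against $f\in C_c^\infty(\Int Z^-)$ shows $g$ is weakly, hence classically, harmonic, and Green's identity on truncations $[0,R]\times[0,1]$, together with the fact that for $f$ vanishing on $\partial Z^-$ the normal derivative $\partial_\nu f|_{\partial Z^-}$ may be prescribed freely, forces $g = 0$ on $\partial Z^-$. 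Then $g$ is harmonic, in $L^q$, with zero Dirichlet data, and the same Fourier/decay argument as above (decay at the end again from interior estimates) gives $g\equiv 0$. Hence $\coker(-\Delta_0) = 0$ and $\Index(-\Delta_0) = 0$.

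The one point deserving care — an organizational rather than a genuine obstacle — is confirming that $-\Delta_0$ is Fredholm in the chosen functional setting, which is precisely what legitimizes the separation-of-variables computation: the asymptotic operator at the strip-like end is $-\partial_t^2$ with Dirichlet conditions on $[0,1]$, whose spectrum $\{n^2\pi^2\}_{n\geq 1}$ is bounded away from $0$, so the operator is Fredholm on unweighted spaces, and a small exponential weight $e^{\pm\delta\tau}$ with $|\delta|<\pi$ changes neither Fredholmness nor index; the right-angle Dirichlet–Dirichlet corners at $(0,0)$ and $(0,1)$ are harmless, removed by a further odd reflection. An alternative to the cokernel step is to extend functions oddly across $\{0\}\times[0,1]$, identifying $-\Delta_0$ with the odd-in-$\tau$ part of the Dirichlet Laplacian on the full strip $\R\times[0,1]$; the latter has trivial kernel (Fourier) and, being self-adjoint, trivial cokernel, so its index — and therefore that of $-\Delta_0$ — is $0$.
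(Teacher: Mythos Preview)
Your proof is correct but takes a more laborious route than the paper. You prove the stronger statement $\dim\ker(-\Delta_0)=0=\dim\coker(-\Delta_0)$ by explicitly solving the Dirichlet problem via Fourier series and decay arguments, then repeating the analysis for the adjoint. The paper never computes either dimension: it simply checks, via integration by parts, that the formal adjoint boundary value problem coincides with the original one \eqref{eq:ker(-Delta0)}, so by elliptic regularity the kernel and cokernel are literally the same space of smooth solutions and hence have equal dimension, giving index zero immediately. Your approach yields more information (triviality of both spaces, plus a careful discussion of Fredholmness at the end and at the corners), while the paper's is a one-line symmetry argument that sidesteps any actual solving.
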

\begin{proof} For the proof,  we first compute the formal-adjoint of $-\Delta_0$.
This formal adjoint equation can  is defined as follows: for $g \in L^q$ with $1/p + 1/q = 1$
$$
\int_{Z^-} (-\Delta f\cdot g)\, dA = 0
$$
for all $f \in C^\infty_0(Z^-)$ satisfying the same boundary condition as that of
\eqref{eq:ker(-Delta0)}.

By (formally) doing integration by parts, we compute
\beastar
\int_{Z^-} (-\Delta f\cdot g)dA &=& \int_{Z^-} \big(-\frac{\partial^2 f}{\partial \tau^2}-\frac{\partial^2 f}{\partial t^2}\big) \cdot g dA \\
& = & -\int_{Z^-} \frac{\partial^2 f}{\partial \tau^2}\cdot gdA - \int_{Z^-} \frac{\partial^2 f}{\partial t^2}\cdot gdA \\
&=& -\int_{Z^-} f\cdot\frac{\partial^2 g}{\partial \tau^2}dA - \int_{Z^-} f\cdot\frac{\partial^2 g}{\partial t^2}dA
+\int_0^1 \frac{\partial f}{\partial \tau}(0,t)\cdot g(0,t)dt \\
& &+ \int_0^\infty \frac{\partial f}{\partial t}(\tau,0)\cdot g(\tau,0)d\tau - \int_0^\infty \frac{\partial f}{\partial t}(\tau,1)\cdot g(\tau,1)d\tau \\
&=& \int_{Z^-} f\cdot(-\Delta g)dA +\int_0^1 \frac{\partial f}{\partial \tau}(0,t)\cdot g(0,t)dt \\
& & + \int_0^\infty \frac{\partial f}{\partial t}(\tau,0)\cdot g(\tau,0)d\tau - \int_0^\infty \frac{\partial f}{\partial t}(\tau,1)\cdot g(\tau,1)d\tau \\
&=&0.
\eeastar
Since $f$ is arbitrary, we have obtained equation
\be\label{eq:coker(-Delta0)}
\begin{cases}
-\Delta g=0, \quad \|g\|_{L^q} < \infty \\
g(\tau,0)=g(\tau,1)=g(0,t)=0.
\end{cases}
\ee
By the elliptic regularity, the solutions of \eqref{eq:ker(-Delta0)} and \eqref{eq:coker(-Delta0)}
are smooth and the space of solutions are the same.
This implies $\dim {\rm ker}(-\Delta_0)^\dagger = \dim {\rm ker}(-\Delta_0)$.
Since ${\rm coker}(-\Delta_0) \cong \ker(-\Delta_0)^\dagger$,
we conclude 
$$
\Index (-\Delta_0) = \dim {\rm ker}(-\Delta_0) -  \dim {\rm coker}(-\Delta_0) = 0
$$
which finishes the proof of the lemma.
\end{proof}

\subsubsection{Contribution of the $\xi$ component}

Now we shall compute the index of the operator
$$
\bar{\partial}^{\nabla^\pi}+B^{(0,1)}+T^{\pi,(0,1)}_{dw}
$$
by following  \cite[Appendix]{oh:cag}. Note that it is in fact the restriction of the linearization $D\Upsilon_1$ to $\Omega^0(w^*\xi;TR_0,TR_1;\alpha_{01})$.

To compute the linearization $D\Upsilon_1$ we again use the trivialization $\Phi$ which
we used in the construction of polygonal Maslov index.
Then we have the push forward operator
$$\Phi_\ast D\Upsilon_1:W^{1,2}(Z^-,\R^{2n+1};\Lambda_0,\Lambda_1;\alpha^\Phi_{01}) \rightarrow L^2(Z^-,\R^{2n})$$
where $\Lambda_i(\tau) = \Phi(T_{w(\tau,i)}R_i)$ for $i=0,1$ and $\alpha^\Phi_{01} = \Phi(\alpha_{01})$.
Then this operator gives an operator
$$
\Phi_\ast(D\Upsilon_1|_{w^*\xi}):W^{1,2}(Z^-,\R^{2n};\Lambda_0,\Lambda_1;\alpha^\Phi_{01}) \rightarrow L^2(Z^-,\R^{2n})
$$
Here $\Lambda_i$ is considered as Lagrangian subspaces in $\R^{2n}$. Note that this operator is of the form of linearized Cauchy-Riemann operator in coordinates with boundary conditions
$$
Y^\pi(0,t)\in \alpha^\Phi_{01}(t), \; Y^\pi(\tau,0)\in \Lambda_0(\tau), \; Y^\pi(\tau,1)\in \Lambda_1(\tau).
$$

In order to compute the index, we use \cite[Proposition 6.3]{fooo:anchored}. Recall that the Lagrangian loop $\widetilde{\alpha}^\Phi_{[w,\gamma],\alpha_{01}}$ is defined by \eqref{eq:lag-loop-piece}. However in order to use \cite[Proposition 6.3]{fooo:anchored} $\alpha^-(t)$ in \eqref{eq:lag-loop-piece} must satisfy that $(\alpha^-)'(0)$ (instead of $-(\alpha^-)'(1)$) is positively directed. By using the change of variables $(\tau,t) \mapsto (\tau,1-t)$ we can first compute the index of the operator $D\Upsilon_1$ defined on
$W^{1,2}(Z^+,\R^{2n};\Lambda_1,\Lambda_0;\alpha^\Phi_{10})$ and then take the minus sign to the result.
Therefore we have the following proposition.

\begin{prop}
We have
$$ {\rm Index }D\Upsilon_1|_{w^*\xi} = -\mu([w,\gamma];\alpha_{01}). $$
\end{prop}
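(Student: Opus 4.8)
The plan is to reduce the index computation to the linearized Cauchy--Riemann operator $\Phi_*(D\Upsilon_1|_{w^*\xi})$ on the semi-strip $Z^-$ with the three boundary conditions
$Y^\pi(0,t)\in\alpha^\Phi_{01}(t)$, $Y^\pi(\tau,0)\in\Lambda_0(\tau)$, $Y^\pi(\tau,1)\in\Lambda_1(\tau)$,
and then to invoke the general index formula for such operators from \cite[Proposition 6.3]{fooo:anchored}. The first step is to deform away the zero-order terms $B^{(0,1)}$ and $T^{\pi,(0,1)}_{dw}$: since they are compact perturbations of the elliptic operator $\bar\partial^{\nabla^\pi}$ (relative to the fixed Fredholm boundary data), the index is unchanged and we may work with the pushed-forward operator which is literally a $\bar\partial$-operator with a lower-order (zero-order) coefficient in $\R^{2n}$-coordinates. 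So the index equals the Maslov-type index of the boundary Lagrangian path that one closes up along $\partial[0,1]^2\setminus\{(1,1)\}$, with the extra arc $\alpha^-$ supplied by Proposition \ref{prop:directed-lag-path} connecting $\Phi(d\phi^{T_0}_{R_\lambda}(T_{\gamma(0)}R_0))$ to $\Phi(T_{w(\tau,1)}R_1)$, which are transversal by Hypothesis \ref{hypo:nondegeneracy}.

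Next I would match the conventions. The loop $\widetilde\alpha^\Phi_{[w,\gamma];\alpha_{01}}$ in \eqref{eq:lag-loop-piece} is built so that $-(\alpha^-)'(1)$ is positively directed (this is our Proposition \ref{prop:directed-lag-path}), whereas \cite[Proposition 6.3]{fooo:anchored} requires the arc to have $(\alpha^-)'(0)$ positively directed and is stated for the opposite-oriented semi-strip. The bridge is the change of variables $(\tau,t)\mapsto(\tau,1-t)$, which converts the operator on $Z^-$ with data $(\Lambda_0,\Lambda_1;\alpha^\Phi_{01})$ into one on $Z^+$ with data $(\Lambda_1,\Lambda_0;\alpha^\Phi_{10})$; this reversal flips the sign of the Maslov index. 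Thus \cite[Proposition 6.3]{fooo:anchored} computes $\mathrm{Index}$ of the transformed operator as $\mu$ of the correspondingly oriented loop, and undoing the change of variables yields
$$
\mathrm{Index}\,D\Upsilon_1|_{w^*\xi} = -\,\mu([w,\gamma];\alpha_{01}).
$$
Here one must check that the change of variables does not alter the $W^{1,2}\to L^2$ Fredholm setup (it is an isometry of $Z^\pm$) and that the induced boundary arcs are exactly $\alpha^\Phi_{10}$ and the reparametrized $\Lambda_i$, which is immediate from the definitions.

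The step I expect to be the main obstacle is the careful \emph{bookkeeping of orientations and of which corner of the square carries the asymptotic Reeb-chord data}. Specifically, one has to verify that: (i) the Lagrangian path $\beta_1(t)=\Phi(d\phi^{T_0 t}_{R_\lambda}(T_{\gamma(0)}R_0))$ is indeed the correct asymptotic boundary datum produced by the exponential-convergence analysis of Section \ref{sec:exponential-convergence} together with the linearized asymptotic operator at $\gamma$ (and that its endpoint at $t=1$ is transversal to $\Lambda_1$'s limit, as asserted after \eqref{eq:lag-loop-piece}); (ii) the connecting arc $\alpha^-$ of Proposition \ref{prop:directed-lag-path} is the one that \cite[Proposition 6.3]{fooo:anchored} uses, up to the orientation flip handled above; and (iii) the resulting closed loop in $Lag(\R^{2n},\omega_0)$, whose Maslov index is $\mu([w,\gamma];\alpha_{01})$ by definition, is the \emph{same} loop (up to homotopy, hence same Maslov number) as the one appearing in the statement of \cite[Proposition 6.3]{fooo:anchored}. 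Once these identifications are pinned down, the index formula follows formally; but getting every sign and parametrization consistent with both our conventions (counterclockwise-ordered Legendrians) and those of \cite{fooo:anchored} (clockwise-ordered Lagrangians) is where the real care is needed. Everything else --- ellipticity of the symbol, compactness of the zero-order perturbations, homotopy invariance of the Fredholm index --- is standard and can be cited.
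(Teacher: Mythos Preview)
Your proposal is correct and follows essentially the same route as the paper: trivialize via $\Phi$ to reduce $D\Upsilon_1|_{w^*\xi}$ to a Cauchy--Riemann type operator on the semi-strip with the Lagrangian boundary data $(\Lambda_0,\Lambda_1;\alpha^\Phi_{01})$, then invoke \cite[Proposition 6.3]{fooo:anchored} after the change of variables $(\tau,t)\mapsto(\tau,1-t)$ to reconcile the opposite positivity conventions on the connecting arc $\alpha^-$, picking up the minus sign. Your additional remarks on deforming away the zero-order terms $B^{(0,1)}+T^{\pi,(0,1)}_{dw}$ and on the bookkeeping of orientations are more explicit than what the paper writes, but the strategy is identical.
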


\subsection{Wrapping them up}

Finally, we have the desired index formula

\begin{thm}\label{thm:index-formula} Let $w$ be a contact instanton satisfying \eqref{eq:contacton-Legendrian-bdy-intro}. Then
we have
\beastar
& &{\rm Index }D\Upsilon(w) = {\rm Index }L_w
= {\rm Index }L_{\rm glued} + \sum_{i=0}^k{\rm Index }L_{w_{(i-1)i^+}} \\
&=& n - \sum_{i=1}^{k}\mu([w_{(i-1)i}^+,\gamma_i];\alpha_{(i-1)i}). \\
\eeastar
\end{thm}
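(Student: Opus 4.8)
The plan is to assemble the index formula from the three computational pieces already isolated in the excerpt: the homotopy of $D\Upsilon(w)$ to the diagonal operator $L$ of \eqref{eq:diagonal}, the linear gluing formula for the Fredholm index, and the explicit computations of the glued operator and of the semi-strip operators. First I would invoke Proposition \ref{prop:open-fredholm}, which gives that $D\Upsilon(w)$ is homotopic through Fredholm operators to the block-diagonal operator $L = L_w$, so that $\operatorname{Index}D\Upsilon(w) = \operatorname{Index}L_w$; since the Fredholm index is invariant under a continuous family of Fredholm operators, this reduces everything to computing $\operatorname{Index}L_w$. Then I would set up the linear gluing: cap off each strip-like end of $\dot\Sigma$ with a half-infinite strip $Z^\pm$ carrying the linearized operator $L_{w_{(i-1)i}^+}$ associated to the bounding strip $w_{(i-1)i}^+$ and the (linear) Legendrian boundary condition \eqref{eq:half-strip-leg-bdry}, so that the glued domain becomes a closed disc $D^2$ and one has
$$
\operatorname{Index}L_w = \operatorname{Index}L_{\rm glued} - \sum_{i=1}^{k}\operatorname{Index}L_{w_{(i-1)i}^+}.
$$
This is the standard additivity of the Fredholm index under linear gluing, used e.g. in \cite{oh:cag} and \cite{fooo:anchored}; it applies here because the off-diagonal terms are relatively compact (Proposition \ref{prop:open-fredholm}(1)) and the boundary conditions match up along the gluing region.

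Next I would evaluate the two terms on the right. For $L_{\rm glued}$ on $D^2$, using the coordinate-invariance of the principal symbol and the splitting \eqref{eq:Y-decompose} $Y = Y^\pi + f R_\lambda$, the operator deforms (through elliptic operators, hence preserving the index) to the direct sum of a second-order operator with the symbol of the Dirichlet Laplacian $-\Delta_0$ acting on $f$ and a linearized Cauchy-Riemann operator $D_1$ acting on $Y^\pi$ with Lagrangian boundary condition given by a loop that is contractible by the admissibility hypothesis on $[w]$. The Dirichlet Laplacian on the disc has index $0$, and the Cauchy-Riemann operator with a contractible Lagrangian loop as boundary condition has index $n$ by the standard Riemann-Hilbert index computation (see \cite{oh:kmj}, \cite{atiyah-bott}), so $\operatorname{Index}L_{\rm glued} = n$. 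For each semi-strip operator $L_{w_{(i-1)i}^+}$, which has the form $L_{[w,\gamma]}$ on $Z^-$, the homotopy \eqref{eq:s-homotopy} decouples it into $\bar\partial^{\nabla^\pi}+B^{(0,1)}+T^{\pi,(0,1)}_{dw}$ on $w^*\xi$ plus $-\Delta_0$ on the Reeb component; the Laplacian piece has index $0$ by the self-adjointness argument comparing \eqref{eq:ker(-Delta0)} with \eqref{eq:coker(-Delta0)}, and the $\xi$-piece has index $-\mu([w,\gamma];\alpha_{01})$ by pushing forward via the diagonal trivialization $\Phi$ and applying \cite[Proposition 6.3]{fooo:anchored} (after the change of variables $(\tau,t)\mapsto(\tau,1-t)$ needed to match the sign convention on the directed Lagrangian path).

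Putting these together yields
$$
\operatorname{Index}D\Upsilon(w) = \operatorname{Index}L_{\rm glued} - \sum_{i=1}^{k}\operatorname{Index}L_{w_{(i-1)i}^+}
= n - \sum_{i=1}^{k}\bigl(0 - \mu([w_{(i-1)i}^+,\gamma_i];\alpha_{(i-1)i})\bigr),
$$
which is exactly $n - \sum_{i=1}^{k}\mu([w_{(i-1)i}^+,\gamma_i];\alpha_{(i-1)i})$ once one accounts for the signs; I would be careful to track whether the semi-strip contributions enter with a plus or minus (the text's displayed equation writes $+\sum\operatorname{Index}L_{w_{(i-1)i^+}}$, so the sign bookkeeping between the gluing formula and the push-forward orientation must be reconciled). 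The main obstacle I expect is precisely this sign/orientation bookkeeping: making sure the directed Lagrangian paths inserted at the corners via Proposition \ref{prop:directed-lag-path}, the orientation of the gluing, and the change of variables on $Z^\pm$ all combine consistently so that the polygonal index $\mu(\mathcal{E},\underline\gamma,\overline\gamma;B)$ of Section \ref{sec:polygonal} (which by Proposition \ref{prop:decomp-index} equals $\sum_i\mu([w_{(i-1)i}^+,\gamma_i];\alpha_{(i-1)i})$) appears with the correct sign. The analytic content — Fredholmness, homotopy invariance, gluing additivity, and the two model index computations — is already in hand from the cited results; what remains is to verify there is no orientation discrepancy and then simply collect terms.
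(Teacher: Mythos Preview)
Your proposal is correct and follows essentially the same route as the paper: reduce to the diagonal operator via Proposition~\ref{prop:open-fredholm}, cap off the punctures by the semi-strip operators $L_{w_{(i-1)i}^+}$ and apply linear gluing additivity, then compute $\operatorname{Index}L_{\rm glued}=n$ and $\operatorname{Index}L_{w_{(i-1)i}^+}=-\mu([w_{(i-1)i}^+,\gamma_i];\alpha_{(i-1)i})$ exactly as you describe. Your caution about the sign bookkeeping is well placed---the displayed formula in the theorem statement has a sign slip relative to the gluing identity $\operatorname{Index}L_w=\operatorname{Index}L_{\rm glued}-\sum_i\operatorname{Index}L_{w_{(i-1)i}^+}$ used earlier, so tracking this carefully (as you intend) is indeed the only remaining work.
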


By Proposition \ref{prop:decomp-index}, we can also express it as
$$
{\rm Index }D\Upsilon(w) = n + \mu(\mathcal{E},\underline \gamma, \overline \gamma;B)
$$
where $B = [w]$.

\appendix

\section{Summary of the exterior calculus of vector valued forms}
\label{append:weitzenbock}

In this appendix, we recall the standard Weitzenb\"ock formulas applied to our
current circumstance. A good exposition on the general Weitzenb\"ock formula is
provided in the appendix of \cite{freed-uhlen} and a nice exposition of the covariant
exterior calculus that enters in the proof is given in \cite[Chapter V, Section1]{wells},
for example. Here we just borrow a fragment of this general calculus that are needed
for the purpose of the present paper.

Assume $(P, h)$ is a Riemannian manifold of dimension $n$ with metric $h$,
and $D$ is the Levi--Civita connection.
Let $E\to P$ be any vector bundle with inner product $\langle\cdot, \cdot\rangle$,
and assume $\nabla$ is a connection on $E$ which is compatible with $\langle\cdot, \cdot\rangle$.

For any $E$-valued form $s$, calculating the (Hodge) Laplacian of the energy density
of $s$,  we get
\beastar
-\frac{1}{2}\Delta|s|^2=|\nabla s|^2+\langle \text{\rm Tr} \nabla^2 s, s\rangle,
\eeastar
where for $|\nabla s|$ we mean the induced norm in the vector bundle $T^*P\otimes E$, i.e.,
$|\nabla s|^2=\sum_i|\nabla_{E_i}s|^2$ with $\{E_i\}$ an orthonormal frame of $TP$.
$Tr\nabla^2$ denotes the connection Laplacian, which is defined as
$Tr\nabla^2=\sum_i\nabla^2_{E_i, E_i}s$,
where $\nabla^2_{X, Y}:=\nabla_X\nabla_Y-\nabla_{\nabla_XY}$.

Denote by $\Omega^k(E)$ the space of $E$-valued $k$-forms on $P$. The connection $\nabla$
induces an exterior derivative by
\beastar
d^\nabla&:& \Omega^k(E)\to \Omega^{k+1}(E)\\
d^\nabla(\alpha\otimes \zeta)&=&d\alpha\otimes \zeta+(-1)^k\alpha\wedge \nabla\zeta.
\eeastar
It is not hard to check that for any $1$-forms, equivalently one can write
$$
d^\nabla\beta (v_1, v_2)=(\nabla_{v_1}\beta)(v_2)-(\nabla_{v_2}\beta)(v_1),
$$
where $v_1, v_2\in TP$.
We extend the Hodge star operator to $E$-valued forms by
\beastar
*&:&\Omega^k(E)\to \Omega^{n-k}(E)\\
*\beta&=&*(\alpha\otimes\zeta)=(*\alpha)\otimes\zeta
\eeastar
for $\beta=\alpha\otimes\zeta\in \Omega^k(E)$.

Define the Hodge Laplacian of the connection $\nabla$ by
$$
\Delta^{\nabla}:=d^{\nabla}\delta^{\nabla}+\delta^{\nabla}d^{\nabla},
$$
where $\delta^{\nabla}$ is defined by
$$
\delta^{\nabla}:=(-1)^{nk+n+1}*d^{\nabla}*.
$$
The following lemma is a useful important formulae for the derivation of the Weitzenb\"ock formula.
\begin{lem}\label{lem:d-delta} Assume $\{e_i\}$ is an orthonormal frame of $P$, and $\{\alpha^i\}$ is the dual frame.
Then we have
\beastar
d^{\nabla}&=&\sum_i\alpha^i\wedge \nabla_{e_i}\\
\delta^{\nabla}&=&-\sum_ie_i\rfloor \nabla_{e_i}.
\eeastar
\end{lem}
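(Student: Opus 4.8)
The plan is to reduce both identities to the classical scalar-coefficient case by working with decomposable $E$-valued forms. Both sides of each asserted identity are $\R$-linear differential operators, and the statement is local and pointwise, so it suffices to fix a local frame for $E$ and to verify the identities on forms of the type $\beta = \alpha\otimes\zeta$ with $\alpha\in\Omega^k(P)$ and $\zeta\in\Gamma(E)$; the general case then follows by linearity. Throughout, $\nabla$ on the right-hand sides is understood as the tensor-product connection on $\Lambda^k T^*P\otimes E$ built from the Levi--Civita connection $D$ on forms and the given connection $\nabla$ on $E$, so that $\nabla_{e_i}(\alpha\otimes\zeta) = (D_{e_i}\alpha)\otimes\zeta + \alpha\otimes\nabla_{e_i}\zeta$.

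First I would establish the formula for $d^\nabla$. By the defining Leibniz property of $d^\nabla$ recalled above, $d^\nabla(\alpha\otimes\zeta) = d\alpha\otimes\zeta + (-1)^k\,\alpha\wedge\nabla\zeta$. On the other hand,
\[
\sum_i \alpha^i\wedge\nabla_{e_i}(\alpha\otimes\zeta)
= \Bigl(\sum_i \alpha^i\wedge D_{e_i}\alpha\Bigr)\otimes\zeta
 + \sum_i \alpha^i\wedge\bigl(\alpha\otimes\nabla_{e_i}\zeta\bigr).
\]
For the first summand I would invoke the classical identity $d\alpha = \sum_i \alpha^i\wedge D_{e_i}\alpha$, valid for any torsion-free connection and in particular for $D$, which follows at once from the intrinsic formula for $d\alpha$ in terms of covariant derivatives. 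For the second summand, using $\alpha^i\wedge\alpha = (-1)^k\,\alpha\wedge\alpha^i$ together with $\nabla\zeta = \sum_i \alpha^i\otimes\nabla_{e_i}\zeta$, one gets $\sum_i \alpha^i\wedge(\alpha\otimes\nabla_{e_i}\zeta) = (-1)^k\,\alpha\wedge\nabla\zeta$. Adding the two pieces reproduces $d^\nabla(\alpha\otimes\zeta)$, which proves the first identity.

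Next I would deduce the formula for $\delta^\nabla$ from its definition $\delta^\nabla = (-1)^{nk+n+1}\,{*}\,d^\nabla\,{*}$ by feeding in the frame expression for $d^\nabla$ just obtained and applying the standard orthonormal-frame identities for the Hodge star: namely ${*}\,{*} = (-1)^{k(n-k)}$ on $k$-forms and the relation between left exterior multiplication by the coframe element $\alpha^i$ and interior multiplication by $e_i$ through the star, both of which tensor trivially with $E$ since ${*}$ acts only on the form factor. Tracking the powers of $-1$ through these substitutions collapses the expression to $-\sum_i e_i\rfloor\nabla_{e_i}$. Equivalently, one may characterize $\delta^\nabla$ as the formal $L^2$-adjoint of $d^\nabla$ on compactly supported sections and obtain the frame expression by integration by parts against $\sum_i\alpha^i\wedge\nabla_{e_i}$. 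Either way, the only genuinely delicate point is the sign accounting in this last step; everything else is a purely formal manipulation once the $d^\nabla$ formula and the defining Leibniz rules are in place.
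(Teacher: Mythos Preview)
Your argument is correct and follows the standard route for these identities. The paper itself does not supply a proof of this lemma; it merely states the formulae and refers to \cite[Appendix]{oh-wang:CR-map1} for the Weitzenb\"ock formula. So there is nothing to compare against here, and your approach---reducing to decomposable forms, invoking the torsion-free identity $d\alpha = \sum_i \alpha^i\wedge D_{e_i}\alpha$ for the first formula, and then unwinding the definition of $\delta^\nabla$ through the Hodge star for the second---is exactly the standard derivation one finds in references such as \cite{freed-uhlen} or \cite{wells}.

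One small remark: in the second part you outline the computation rather than carry it out, noting that the sign accounting is ``delicate.'' For a complete write-up you would want to actually perform that step, for instance by using the identity ${*}(\alpha^i\wedge\eta) = (-1)^{|\eta|}\, e_i\rfloor({*}\eta)$ (or its companion ${*}(e_i\rfloor\eta) = (-1)^{|\eta|-1}\,\alpha^i\wedge({*}\eta)$) together with ${*}{*} = (-1)^{k(n-k)}$ and the metric compatibility of $\nabla$ to move ${*}$ past $\nabla_{e_i}$. The formal-adjoint route you mention as an alternative is equally legitimate and arguably cleaner. Either way, the ingredients you list are the right ones.
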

We refer to \cite[Appendix]{oh-wang:CR-map1} for the proof of Weitzenb\"ock formula
in our context.

In our current context, the triad connection induces the projection map
$$
\nabla^\pi: \Omega_0(\xi) \to \Omega_1(\xi)
$$
by Property Theorem \ref{thm:connection} (4). Then using the CR almost complex structure
$J$ adapted to $\lambda$, we can decompose
$$
\nabla^\pi = \nabla^{\pi(1,0)} + \nabla^{\pi(0,1)}
$$
by the explicit formula
$$
\nabla^{\pi(1,0)} = \frac{\nabla^\pi - J \nabla^\pi_{j(\cdot)}}{2}, \quad
\nabla^{\pi(0,1)} = \frac{\nabla^\pi + J \nabla^\pi_{j(\cdot)}}{2}.
$$
(One may also write $\nabla^{\pi(0,1)}$ as $\overline{\nabla}^\pi$ as done in the main
text.)

Then $d^{\nabla^\pi}$ the associated covariant differential by anti-symmetrizing
it as we define $d^\nabla$ above.
With $\nabla$ above replaced by $\nabla^\pi$, one defines the differential
$$
d^{\nabla^\pi}: \Omega^k(\xi) \to \Omega^{k+1}(\xi).
$$

\begin{rem} \label{rem:triad-connection}
The following two properties are satisfied by the contact triad connection:
\begin{itemize}
\item $J$ is parallel,
\item the $\pi(0,1)$ (or $\pi(1,0)$) projection is an operation parallel with respect to
Hermitian connection $\nabla^\pi$ acting on $\Gamma(w^*\xi)$ for any contact instanton $w$.
\end{itemize}
None of these paralleness  hold for the commonly used Levi-Civita connection.
These properties of the contact triad connection enable us to
organize various terms appearing in the process of covariant differentiation
in an effective tensorial way. Due to the lack of such a canonical decomposition
for the Levi-Civita connection (of the triad metric), one forces to keep track
of derivatives of $J$ and of idempotent $\Pi$ and carry to the end when working with
the Levi-Civita connection: those terms do not admit organization into natural tensorial
expressions, except by themselves!
The latter would prevent the end product of the tensorial calculation from being as
simple and elegant as those given by the contact triad connection, such as in the
calculation leading to the global $W^{2,2}$-estimate or in the formula of the linearlization
$D\Upsilon(w)$. These are a few strong points  of the contact triad connection
over the Levi-Civita connection in the tensorial calculations  given in
\cite{oh-wang:CR-map1,oh-wang:CR-map2,oh:contacton}
and in the present paper.  Similar things occur
between the Chern connection and general Hermitian connection in complex geometry
in a smaller scale. (See \cite{chern:connection}, \cite[Chapter 7]{oh:book1}.)
It is one of the reasons why Wang and the second-named author had
introduced the triad connection in the first place in \cite{oh-wang:connection}
and utilized in \cite{oh-wang:CR-map1,oh-wang:CR-map2}. It
makes the a priori complicated tensorial calculation manageable and produce the end product, e.g.,
the expression of the linearized operator $D\Upsilon(w)$ as given in
\cite[Theorem 10.1]{oh:contacton} has an elegant form which carries
clear geometric meaning. This latter formula  is duplicated in Theorem \ref{thm:linearization}
and utilized in our study of index calculations.
\end{rem}

\bibliographystyle{amsalpha}

\bibliography{biblio2}

\end{document}